\newcommand{\Rr}{\mathcal R}
 \newcommand{\Cc}{\mathcal C}
 \newcommand{\Dd}{\mathcal{D}}
 \newcommand{\Ll}{\mathcal{L}}
  \newcommand{\LL}{\mathcal{L}}
  \newcommand{\MM}{\mathcal{M}}
  \newcommand{\GG}{\mathcal{G}}
 \newcommand{\Bb}{\mathcal B}
 \newcommand{\RR}{\mathbf{R}}  
 \newcommand{\BB}{\mathbf{B}}  
  \newcommand{\HH}{\mathbf{H}}  
  \newcommand{\ppm}{{\{p^+,p^-\}}}  
  \newcommand{\Div}{\operatorname{Div}}
  \newcommand{\ddiv}{\operatorname{div}}
    \newcommand{\dist}{\operatorname{dist}}
 \newcommand{\area}{\operatorname{area}}
 \newcommand{\eps}{\epsilon}
 \newcommand{\UU}{\mathcal{U}}
\def\begfig {
\begin{figure}
\small }
\def\endfig {
\normalsize
\end{figure}
}
    \newtheorem{theorem}    {Theorem}       [section]
    \newtheorem{lemma}      [theorem]       {Lemma}
    \newtheorem{corollary}  [theorem]     {Corollary}
    \newtheorem{proposition}       [theorem]       {Proposition}
    \newtheorem*{claim}{Claim}
    \newtheorem*{theorem*}{Theorem}
    \theoremstyle{definition}
    \newtheorem{definition}  [theorem] {Definition}
    \theoremstyle{definition}
    \newtheorem{remark}   [theorem]       {Remark}
  \newcommand{\marginpor}[1]{}
\title{Limiting behavior of  sequences of properly embedded minimal disks}
\subjclass[2010]{Primary: 53A10; Secondary: 49Q05, 53C42}
\author{David Hoffman}
\address{Department of Mathematics\\ Stanford University\\ Stanford, CA 94305}
\email{dhoffman@stanford.edu}
\author{Brian White}
\address{Department of Mathematics\\ Stanford University\\ Stanford, CA 94305}
\thanks{The research of the second author was supported by NSF grant DMS~1404282,
and by a grant from the Simons Foundation.}
\email{bcwhite@stanford.edu}
\date{May 31, 2017}
\begin{document}
\maketitle
\begin{abstract}
We develop a theory of ``minimal $\theta$-graphs'' and characterize the  behavior of limit laminations of such surfaces, including an understanding of their limit leaves and their curvature blow-up sets.
We use this to prove  that it is possible to realize families of catenoids in euclidean space as limit leaves of sequences of embedded minimal disks, even when there is no curvature blow-up.  Our methods work in a more general Riemannian setting, including hyperbolic space. This allows us  to establish the existence of a complete, simply connected,  minimal surface in hyperbolic space that is not properly embedded.
\end{abstract}
\section{Introduction}
Let $D_n$ be a sequence of properly embedded minimal disks in an open subset $W$ of a Riemannian $3$-manifold.
Then there is a subsequence $D_{n(i)}$ such that   
 the curvatures of the $D_{n(i)}$ blow up at the points of closed subset $K\subset W$ (possibly empty), and such that the 
$D_{n(i)}$ converge smoothly away from $K$ to a minimal lamination $\LL$ of $W\setminus K$.   
One would like to know what closed sets $K$ and what laminations $\LL$ can arise in this way.  
Colding and Minicozzi proved very strong theorems about such $K$
and $\LL$.  In particular, they showed (under mild hypotheses on the ambient metric)
that $K$ is contained in a rectifiable curve,
and that for each point $p$ in $K$, there is a unique leaf $L$ of the lamination 
such that that $p\in \overline{L}$
and such that $L\cup\{p\}$ is smooth.
(See~\cite{CM4}*{Section I.1}. See also~\cite{CM4}*{Theorem 0.1} for a
closely related result.)
Later it was shown that $K$ is contained in a $C^1$ curve, and that $L\cup\{p\}$ is perpendicular to that curve.  See~\cite{MeeksRegularity1} and~\cite{white-C1}.

 In this paper, we give
a more detailed description of the lamination and of the singular set
for a certain rich class of minimal disks.   In particular, we prove

\begin{theorem}\label{qualExistence}
Let $\BB\subset \RR ^3$ be the unit ball and let $Z\subset \RR ^3$ be the vertical coordinate axis. Suppose $D_n$ is a sequence of properly embedded minimal disks in the ball $\BB$
with the property  that each disk $D=D_n$ satisfies
\begin{equation}\label{extthetagraph}
\text{$\BB\cap Z\subset D$, and the images of $D\setminus Z$
under rotations about $Z$ foliate $\BB\setminus Z$.}
\end{equation}

\noindent Then there is a subsequence $D_{n(i)}$, a relatively closed subset $K$ of $\BB\cap Z$,
and a minimal lamination $\Ll$ of $\BB\setminus K$ such that
\begin{enumerate}
\item[1.] The curvatures of the $D_{n(i)}$ blow-up precisely at the points of $K$.
\item[2.] The $D_{n(i)}$ converge smoothly away from $K$ to the lamination $\Ll$.
\item[3.] The limit leaves of $\Ll$ are catenoids and rotationally invariant disks.
\item[4.] The curvature blow-up set $K$ is precisely the set of centers of the disks in statement 3.
\item[5.]  If $L$ is a non-limit leaf of $\Ll$, then $L\setminus Z$ and its rotations around $Z$ foliate
an open subset of $\BB\setminus Z$. In fact, each component of the complement of the limit leaves of $\Ll$ in $\BB\setminus Z$ is foliated by the rotations of such a non-limit leaf.
\end{enumerate}
\end{theorem}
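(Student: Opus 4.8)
The key structural feature I would exploit is the hypothesis \eqref{extthetagraph}: each $D_n$, together with its rotations about $Z$, foliates $\BB\setminus Z$, so $D_n$ is what the paper calls a ``minimal $\theta$-graph.'' The plan is to encode such a disk by a single function: for each point $q\in \BB\setminus Z$ there is a unique rotation angle $\theta_n(q)$ taking $q$ into $D_n$, and minimality of $D_n$ together with the foliation property should translate into a quasilinear elliptic PDE for $\theta_n$ (or for its ``inverse,'' a function giving the disk as a graph over a half-plane in cylindrical coordinates $(r,z)$). This reduces the problem to studying limits of solutions of a scalar elliptic equation that is rotationally symmetric, which is where the catenoid/rotationally-invariant-disk dichotomy must come from. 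I would first establish, citing or adapting the earlier $\theta$-graph theory in the paper, that the $\theta_n$ (or their graphical representatives) satisfy uniform interior estimates away from $Z$ and away from the curvature blow-up locus.

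Next I would extract the subsequence. By the general compactness result quoted in the introduction, after passing to a subsequence the curvatures blow up on a closed set $K_0\subset \BB$ contained in a $C^1$ curve, and $D_{n(i)}\to\Ll$ smoothly on $\BB\setminus K_0$. The first real claim (statements 1 and 4, partially) is that $K_0\subset Z$: here I would use the rotational structure --- if curvature blew up at a point off $Z$, then by applying rotations it would blow up on an entire circle, and the leaf $L$ through that circle with $L\cup(\text{circle})$ smooth and perpendicular to a $C^1$ curve would be incompatible with the $\theta$-graph foliation condition, which forces leaves to meet $Z$ orthogonally in a controlled way. So $K:=K_0\subset \BB\cap Z$, giving statement 1 modulo also showing curvature actually does blow up at every point of $K$ (which is automatic from the definition of $K_0$ as the blow-up set, once we know the convergence is smooth exactly off $K_0$).

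Then I would analyze $\Ll$ leaf by leaf using the symmetry. Each $D_{n(i)}$ is rotationally ``spread out'' to foliate $\BB\setminus Z$; passing to the limit, the leaves of $\Ll$ must be invariant in families under rotation, and an individual leaf $L$ is either rotationally invariant itself (a limit leaf, by the standard fact that limit leaves of a sequence of embedded minimal surfaces are stable, hence --- being rotationally symmetric minimal surfaces in $\BB\subset\RR^3$, i.e. pieces of planes or catenoids --- these give the catenoids and rotationally invariant disks of statement 3) or else $L$ together with its rotations foliates an open region, giving statement 5. For statement 4, I would argue that the center of each such rotationally invariant disk leaf lies in $K$: near such a center the graphical functions $\theta_n$ cannot converge smoothly (a rotationally invariant disk through a point of $Z$ is the $n\to\infty$ ``flattening'' of sheets that wind, forcing a curvature blow-up), while conversely at a point of $K\subset Z$ the only way the lamination can close up across the curvature singularity, given the earlier-cited regularity ($L\cup\{p\}$ smooth and $\perp$ the curve, here $\perp Z$ since $K\subset Z$), is for the relevant leaf to be a rotationally invariant disk centered there.

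I expect the main obstacle to be statement 4 together with the precise matching in statement 1 --- showing that the curvature blow-up set is \emph{exactly} the set of centers, with no extra points and no missing ones. The inclusion "centers $\subset K$" requires ruling out that a rotationally invariant disk leaf extends smoothly as part of $\Ll$ across its center with the sequence converging smoothly there; the reverse inclusion requires showing every blow-up point on $Z$ is the center of a genuine disk leaf (not, say, a limit of catenoid necks shrinking to a point, which one must exclude or reinterpret). I would handle this by a careful local model analysis near a point $p\in Z$: rescale, use the $C^1$-curve regularity and the orthogonality to $Z$, and show the blow-up limit is a plane while the pre-limit sheets are $\theta$-graphs that must spiral, forcing the limit leaf through $p$ to be rotationally invariant and $p$ to be its center. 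The rotational symmetry should make this local analysis tractable, effectively reducing it to an ODE/one-variable PDE picture.
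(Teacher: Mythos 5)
Your overall architecture (reduce to $\theta$-graphs, split leaves into rotationally invariant ones versus foliating ones, do a blow-up analysis on $Z$ to match the blow-up set with the disk centers) matches the paper's, which proves Theorem~\ref{qualExistence} by passing to the two spanning $\theta$-graph halves of each $D_n$ (Lemma~\ref{extthetagraphs}) and invoking Theorem~\ref{laminationproperties}. But you are missing the single idea that drives the whole argument there: because the rotations of $D_n$ foliate $\BB\setminus Z$, the rotational Killing field $\partial/\partial\theta$ restricts to a \emph{nowhere-vanishing Jacobi field} on $D_n$, so each $D_n$ is \emph{stable}. Stability gives Schoen-type curvature bounds on compact subsets of $\BB\setminus Z$, which immediately yields smooth subconvergence to a lamination of $\BB\setminus Z$ and the inclusion $K\subset Z$ --- no appeal to the Colding--Minicozzi $C^1$-curve structure theory is needed. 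Your substitute argument for $K\subset Z$ ("if curvature blew up off $Z$ it would blow up on a whole circle by rotating") does not work: the individual disks $D_n$ are not rotationally invariant, so the curvature blow-up set of the fixed sequence $D_{n(i)}$ has no a priori rotational symmetry; rotating produces blow-up of a \emph{different} sequence at the rotated point. Likewise, the leaf dichotomy you assert ("each leaf is rotationally invariant or its rotations foliate an open set") is exactly what needs proof; in the paper it follows because the limiting Jacobi field on a leaf $L$ has a sign, hence by the maximum principle vanishes identically (giving rotational invariance) or nowhere (giving transversality to every rotationally invariant circle, whence $L$ is a $\theta$-graph by the winding-number criterion of Lemma~\ref{thetagraphs}). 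Your parenthetical "limit leaves are stable, hence rotationally symmetric" is a non sequitur --- stability does not imply symmetry; the paper instead shows a non-rotationally-invariant leaf meets some circle exactly once with single-sheeted convergence, so it cannot be a limit leaf, and conversely every rotationally invariant leaf is a limit leaf.

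For Statements 4 and 1 your instinct is right that this is the delicate point, but the mechanism you should use is again stability rather than spiraling heuristics: at a point $p\in K:=Z\cap\overline{\cup\LL'}$ one rescales the nearby rotationally invariant leaves; these are stable (as leaves of the limit of stable surfaces), the blow-up limit is a stable rotationally invariant minimal surface in $\RR^3$, and since catenoids are unstable it must be a horizontal plane. This is what forces the leaf $L(p)$ to close up as a smooth disk with horizontal tangent at $p$ (and rules out your worry about "catenoid necks shrinking to a point"). The converse inclusion $K\subset\{\text{blow-up points}\}$ then follows from an elementary slope comparison: if curvatures of the $D_{n(i)}$ were bounded near $p$, then since their tangent planes along $Z$ are vertical, every leaf of $\LL$ near $p$ would have tangent planes of slope at least $1$, contradicting the horizontal disk $L(p)$. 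As written, your proposal has a genuinely flawed step (the rotation argument for $K\subset Z$) and omits the stability/Jacobi-field input needed to justify the leaf dichotomy and the plane-versus-catenoid dichotomy in the blow-up; with those supplied, it becomes essentially the paper's proof.
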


It is not hard to produce examples of disks $D_n$ satisfying condition~\eqref{extthetagraph} of Theorem~\ref{qualExistence}.
In particular, let $C_n \subset \partial \BB$ be a smooth, simple closed curve that intersects
each horizontal circle in $\partial \BB$ in exactly two diametrically opposite points.
   Then there is unique embedded minimal disk $D_n$ such that $\partial D_n=C_n$
and such that $Z\cap \BB\subset D_n$.  Furthermore, it is easy to show that the disk satisfies condition \eqref{extthetagraph} of Theorem~\ref{qualExistence}. 
 See Section~\ref{ExistenceUniqueness} below.  

By choosing suitable curves $C_n$ and taking the corresponding disks $D_n$,
 we can produce interesting examples of blow-up sets $K$ and 
limit laminations $\Ll$.   
For example, let $\MM$ be the lamination of $\BB$ consisting of all  the area-minimizing catenoids in $\BB$ with axis $Z$
that are symmetric
about the $xy$-plane, together with all horizontal disks that are disjoint from those catenoids (See Figure~\ref{AreaMinimizingCatenoidsAndDisks}.)
We show  that there is a sequence $D_n$ of properly embedded minimal disks in $\BB$  with a  limit lamination $\Ll$ (from Theorem~\ref{qualExistence})  whose 
rotationally invariant leaves are precisely the surfaces in $\MM$ and that has exactly one  
leaf that is not rotationally invariant. (That additional leaf contains a segment of $Z$.)
More generally, if $\MM^*$ is essentially any symmetric sublamination of $\MM$, we show that there is a sequence
$D_n$ such that the rotationally invariant leaves of the limit lamination are precisely 
the surfaces in $\MM^*$. 
(See Theorem~\ref{realizing-M(T)}.) Of course by 
Statement~4 of Theorem~\ref{qualExistence},
the curvatures of the $D_{n(i)}$ blow-up precisely at the centers of the disks in $\MM^*$.
\begfig
\includegraphics[width=2.05in]{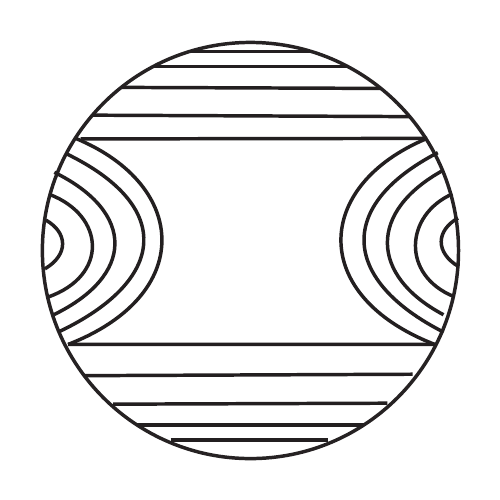}
 \hfill
 \begin{center}
 \parbox{4.1in}{ 
\caption{\label{AreaMinimizingCatenoidsAndDisks} {\bf Limit Leaves $\MM$ in the unit ball $\BB$}.
Depicted here in cross section is the lamination  $\MM$ of $\BB$ consisting of all area-minimizing catenoids with
axis $Z$  and symmetry plane $\{z=0\}$, together with all horizontal disks that are disjoint from the catenoids.  Essentially
any symmetric sublamination $\MM^*$ of $\MM$ can be realized as the set of limit leaves of a limit lamination of a sequence of properly embedded minimal disks in $\BB$. This is proved in Theorem~\ref{realizing-M(T)}.
}
 }
 \end{center}
 \endfig

The results stated above remain true if the Euclidean metric on $\BB$ is replaced by 
  any suitable rotationally symmetric Riemannian metric.  
   In particular, they remain true for the Poincar{\'e} metric
on $\BB$.   We show that many kinds of limit laminations and blow-up sets occur 
for sequences of disks that are properly embedded in all of hyperbolic space.   This is in very
sharp contrast to the situation in $\RR^3$. Consider a sequence $\BB_1\subset \BB_2 \subset \dots$
of balls that exhaust $\RR^3$ and properly embedded minimal disks $D_n\subset \BB_n$.  
By work of Colding and Minicozzi \cite{CM3},
with extensions by Meeks-Rosenberg~\cite{meeks-rosenberg-uniqueness}
and Meeks~\cite{MeeksRegularity1},
there are only three possible behaviors (after passing to a subsequence):
\begin{enumerate}[\upshape $\bullet$]
\item The $D_n$ converge smoothly to a helicoid.
\item The $D_n$ converge smoothly to a lamination of $\RR^3$ by parallel planes.
\item The curvature blow-up set $K$ is a straight line, and 
disks $D_n$ converge smoothly in $\RR^3\setminus K$ 
to the foliation consisting of all planes perpendicular to $K$.
\end{enumerate}

Note that if $D_n$ is the portion in the ball $\BB$
of a helicoid with axis $Z$  and if the curvatures of the $D_n$ tend to infinity,
then the curvature blow-up set is $Z\cap \BB$.   Colding and Minicozzi \cite{CM-proper-nonproper} constructed an example in which the blow-up set
is $Z^-\cap \BB$ (where $Z^-$ is the set of points $(0,0,z)$ with $z\le 0$.)
Khan \cite{khan} then showed that $K$ can be any finite subset of $Z\cap\BB$.  The  authors of this paper proved that $K$ can be any relatively closed
subset of $Z\cap\BB$ \cite{hoffman-white-sequences}. In particular, sets with non-integral Hausdorff dimension can arise as blow-up sets.  (Subsequently, Kleene \cite{kleene} gave another proof of this theorem.)  In all of those examples, the limit leaves of the limit lamination
are precisely the horizontal disks centered at points of $K$.  (Indeed, in all of those examples, the disks $D_n$
satisfy  conditon \eqref{extthetagraph} of Theorem~\ref{qualExistence}, and they have an additional property: the tangent plane to $D_n$ is not vertical except
at points on $Z$.) In Section~\ref{thetasurfaces} we will develop the theory of embedded minimal disks, satisfying condition~\eqref{extthetagraph}.

\setcounter{theorem}{+2}
\subsection{The mathematical advances in this paper}
\begin{enumerate}

\item[1.] We prove that it is possible to realize families of catenoids (as well as horizontal disks)
as limit leaves of a limit lamination of embedded minimal disks, even when there is no curvature blow-up. This result raises the question of whether it is possible to produce limit leaves (of a limit lamination of a sequence of embedded minimal disks) that are neither disks nor annuli. Under the assumption that $W$ is mean convex and contains no closed minimal surfaces, 
 Bernstein and Tinaglia \cite{bernstein-tinaglia} have recently proved that  the answer is no.
\item[2.]   The constructions  to produce these examples work
for more general Riemannian metrics (such as the Poincar{\'e} metric) on $\BB$.
\item[3.]  Colding and Minicozzi \cite{CM-CY} proved a general Calabi-Yau conjecture for complete embedded minimal surfaces in $\RR^3$ of finite topology: such a surface must be properly embedded. We use our limit lamination theory to prove that such a theorem fails in hyperbolic three-space, even for simply connected minimal surfaces. This was originally proved by Baris Coskunuzer \cite{BarisC} 
 by entirely different methods. Our approach yields an example on either side of any area-minimizing catenoid in hyperbolic space. See Theorem~\ref{CCCounterexample}.
\end{enumerate}

\setcounter{theorem}{+3}
\subsection{An outline of the sections of this paper}
In Section~\ref{thetasurfaces},  minimal $\theta$-graphs are introduced and their limiting behavior is analyzed. They are essentially  the surfaces satisfying condition \eqref{extthetagraph} of 
Theorem~\ref{qualExistence}. but in a more general Riemannian setting.  

In Section~\ref{ExistenceUniqueness} we prove the existence of minimal $\theta$-graphs
 with prescribed boundary.  
 In Section~\ref{SmoothConvergenceBoundary},
  we prove (under suitable hypotheses) smooth
   convergence at the boundary for sequences of minimal $\theta$-graphs. 
In Section~\ref{necessaryconditions}
we use a standard calibration-type argument
 to establish a necessary area-minimization property for laminations consisting of planes and catenoids to appear as 
the limit leaves of a limit lamination of minimal $\theta$-graphs.  
We conjecture that it is a sufficient condition.

  In Section~\ref{SpecifyingLeaves}, we use this existence results of the previous two sections to show that we can, under certain conditions, 
  specify the limit leaves of a limit laminations coming from 
  a sequence of minimal $\theta$-disks. In particular, we construct  sequences of embedded minimal disks whose limit laminations have prescribed limit-leaf sublaminations containing catenoids.  
  
In Sections~\ref{hyperbolic1}-\ref{hyperbolic3}, 
we extend the results of Sections~\ref{ExistenceUniqueness}-\ref{SpecifyingLeaves} 
to hyperbolic three-space.
Handling the infinite-area minimal surfaces that arise there requires
an additional argument. That argument (in Section~\ref{necessaryconditionshyperbolic})
 was inspired by the work of Collin and Rosenberg \cite{collin-rosenberg-harmonic} on minimal
graphs in $H^2\times\RR$.
In Section~\ref{hyperbolic3},  we prove (Theorem~\ref{CCCounterexample}) that there exists a complete and simply connected embedded minimal surface in hyperbolic space that is not properly embedded.

\section{$\theta$-graphs}\label{theta}\label{thetasurfaces}
In this section we will denote by $W$ a connected open set in $\RR^3$ that is 
rotationally symmetric  about the $x_3$-axis $Z$.

\begin{definition}{\bf ($\theta$-graph, spanning $\theta$-graph )} \label{ThetaGraphDefinition}
Let $M$ be a smooth surface  in $W\setminus Z$. Then $M$ is a {\bf  $\theta$-graph} if it can be written in the form \begin{equation}\label{form}
  \{ (r\cos\theta(r,z), r\sin\theta(r,z), z):  (r,z)\in V\},
\end{equation}
where $\theta (r,z):V\rightarrow\RR$ is a smooth, real-valued function, and $V$ is an open subset
of $$\{(r,z):\, r>0,\,\, (r,0,z)\in W\}.$$
\end{definition}
\noindent

 A $\theta$-graph  $M\subset W\setminus Z$ intersects 
 each rotationally invariant circle at most once. 
 We say that $M$ is a {\bf spanning $\theta$-graph} if it intersects every rotationally
 invariant circle in $W\setminus Z$ exactly once. This is equivalent to  the assertion that the domain of definition of $\theta(r,z)$ equals $\{(r,z):\, r>0,\,\, (r,0,z)\in W\} $, and also equivalent to  the requirement that 
 $M$ and its rotated images foliate $W\setminus Z$.

\begin{remark}[Simple examples of $\theta$-graphs] \label{SimpleExamples} 
Let $W=\RR^3$ and 
$V=\{(r,z):\, r>0\}$. 
 If we let  $\theta (r,z) =c$ in~\eqref{form},
then the surface is a vertical halfplane with boundary $Z$.
If we let  $\theta (r,z) =z/\alpha$, for any $\alpha\neq0$, then the surface
is a half-helicoid with pitch $2 \pi \alpha$ and axis $Z$.
\end{remark}
\noindent
 
\begin{lemma}\label{thetagraphs} Let $W\subset \RR ^3$ be a rotationally invariant domain.  Suppose $M$ is a smooth  embedded   surface in $W\setminus Z$. Then the following  two conditions are equivalent:  
\begin{enumerate}
\item[1.]
\begin{enumerate}
		\item  Given any rotationally invariant circle $S$, either $M$ is disjoint from $S$ or intersects $S$ precisely once, and the intersection is transverse.
		\item Any closed curve in $M$ has winding number $0$ about $Z$.
\end{enumerate}
\item[2.] 
 $M$ is a $\theta$-graph.
\end{enumerate}
\end{lemma}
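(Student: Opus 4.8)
The plan is to treat the two implications separately; $2\Rightarrow 1$ is a routine check, while $1\Rightarrow 2$ is where the real work lies. The two tools I would set up first are the \emph{cylindrical projection} $\Pi\colon W\setminus Z\to\Omega$, $\Pi(x_1,x_2,x_3)=(\sqrt{x_1^2+x_2^2},\,x_3)$, onto the open set $\Omega=\{(r,z):r>0,\ (r,0,z)\in W\}\subset\RR^2$, and the closed \emph{angular $1$-form} $\alpha=(x_1\,dx_2-x_2\,dx_1)/(x_1^2+x_2^2)$ on $\RR^3\setminus Z$, whose integral over a loop is $2\pi$ times that loop's winding number about $Z$. I would note at the outset that $\Pi$ is a submersion and that its fibers are exactly the rotationally invariant circles, with $\ker d\Pi_p$ the tangent line to the circle through $p$.

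For $2\Rightarrow 1$, I would write $M=F(V)$ with $F(r,z)=(r\cos\theta(r,z),r\sin\theta(r,z),z)$. A rotationally invariant circle of radius $\rho$ at height $\zeta$ meets $M$ exactly at the single point $F(\rho,\zeta)$ when $(\rho,\zeta)\in V$, and not at all otherwise (since the radius and height of $F(r,z)$ recover $(r,z)$); and a short computation with the columns $F_r,F_z$ of $dF$ shows the angular direction $(-\sin\theta,\cos\theta,0)$ tangent to the circle is not in $\operatorname{span}(F_r,F_z)$ — the third coordinate kills the $F_z$-coefficient and then the radial component kills the $F_r$-coefficient — which gives transversality. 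For 1(b), a closed curve in $M$ is $F\circ\tilde\gamma$ for a loop $\tilde\gamma$ in $V$, and its angular coordinate is the single-valued continuous function $\theta\circ\tilde\gamma$, so its net change, hence the winding number, is $0$; equivalently $\alpha|_M=d\theta$ is exact.

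For $1\Rightarrow 2$, put $\pi=\Pi|_M$. First I would use 1(a): transversality of $M$ to every fiber of $\Pi$ means $T_pM$ is complementary to $\ker d\Pi_p$ at each $p$, so $d\pi_p\colon T_pM\to T_{\pi(p)}\Omega$ is an isomorphism and $\pi$ is a local diffeomorphism, hence an open map, so $V:=\pi(M)$ is open in $\Omega$; and the ``at most one intersection'' clause makes $\pi$ injective, since two points of $M$ with the same image lie on a single rotationally invariant circle. Thus $\pi\colon M\to V$ is a diffeomorphism. Next I would use 1(b): it says every period of the closed form $\alpha|_M$ vanishes, so $\alpha|_M=d\Theta$ for some smooth $\Theta\colon M\to\RR$ (component by component if $M$ is disconnected). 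Because a local branch $\phi$ of the polar angle satisfies $\alpha=d\phi$, on each component $\Theta-\phi$ is constant, and after absorbing that constant into $\Theta$ we may take $\Theta$ itself to be a determination of the polar angle, i.e.\ $x_1=r\cos\Theta$ and $x_2=r\sin\Theta$ throughout $M$, with $r=\sqrt{x_1^2+x_2^2}$. Finally, setting $\theta:=\Theta\circ\pi^{-1}\colon V\to\RR$ yields a smooth function with $M=\{(r\cos\theta(r,z),r\sin\theta(r,z),z):(r,z)\in V\}$ and $V\subset\Omega$ open, so $M$ is a $\theta$-graph.

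The main obstacle is precisely the passage from pointwise and homotopy-theoretic hypotheses to a genuinely \emph{single-valued} angle function: the transversality half of 1(a) is what makes $\pi$ a local diffeomorphism, the ``at most once'' half is what makes it injective, and 1(b) is exactly the vanishing of the cohomology class of $\alpha|_M$, hence its exactness. The one delicate point is verifying that the primitive $\Theta$ — a priori merely \emph{some} antiderivative of $\alpha|_M$ — can be normalized to an actual value of the polar angle; this is handled by comparing it with a local branch near a basepoint in each component and adjusting the integration constant accordingly.
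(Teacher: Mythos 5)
Your argument is correct and is essentially the paper's own proof written out in full detail: the paper observes that condition 1(a) is equivalent to $M$ being a graph of an $\RR/2\pi\ZZ$-valued angle function over an open set $V$ (your projection-diffeomorphism step), and that this circle-valued function lifts to a single-valued $\theta\colon V\to\RR$ exactly when 1(b) holds (your exactness-of-the-angular-form step). The only difference is that you make the lifting criterion precise via the periods of the closed $1$-form $\alpha$, which is the standard way to justify the paper's one-line assertion.
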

\begin{proof} 
Statement $1(a)$ is equivalent to a weakened form of  Statement~2, produced  by replacing the function
 $\theta:V\rightarrow \RR$ in Definition~\ref{ThetaGraphDefinition}  by  a  smooth function taking values in $\RR$ modulo $2\pi$. The function $\theta$ lifts to a single-valued function into $\RR$  if and only if assertion~$1(b)$ holds.
\end{proof}

\stepcounter{theorem}

 \subsection{The relationship between  
  spanning $\theta$-graphs and the surfaces of Theorem~\ref{qualExistence} }
  We are interested in properly embedded minimal surfaces
    $M\subset W$ with $\partial M\subset \partial W$ 
that satisfy the following property:
\begin{equation}
\text{$W\cap Z\subset M$, and the rotations of $M\setminus Z$ 
      foliate $W\setminus Z$.}
 \label{extended}
\end{equation} 
This is condition \eqref{extthetagraph} of Theorem~\ref{qualExistence} stated for the domain $W$. 
As indicated in the introduction, all the existence theorems for sequences of minimal disks 
are proved by producing surfaces of this kind.
 Their intimate relationship with  spanning $\theta$-graphs is given by the following lemma.

\begin{lemma}
\label{extthetagraphs} Let $W\subset \RR ^3$ be a  simply connected  domain that is rotationally symmetric around $Z$. Let $M$ be a smooth,  properly embedded surface in $W$.  Then
$M$ satisfies \eqref{extended} if and only if $M\setminus Z$ consists of two components, 
each of which is a  spanning $\theta$-graph, and the components are related by $\rho_Z$, 
 $180^\circ$ rotation about $Z$ by $\pi$.
  \end{lemma}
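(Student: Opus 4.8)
The plan is to prove the two implications separately, using Lemma~\ref{thetagraphs} to convert the geometric foliation hypothesis into the analytic $\theta$-graph form. For the forward direction, assume $M$ satisfies \eqref{extended}. First I would observe that $M\setminus Z$ is an embedded surface in $W\setminus Z$ whose rotations foliate $W\setminus Z$; in particular $M\setminus Z$ meets each rotationally invariant circle in $W\setminus Z$ in a set that is both open (it is a leaf of a foliation, so locally a graph) and closed in that circle, hence either empty or all of it or a single point — and since the rotations \emph{foliate}, a nonempty intersection must be exactly one point, and transverse (a foliation by the orbits of the rotation action has leaves transverse to the circles that are single orbits' complements... more carefully: if $M\setminus Z$ contained a full circle, its rotations could not be disjoint). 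So condition $1(a)$ of Lemma~\ref{thetagraphs} holds. For $1(b)$, I would use that $W$ is simply connected and rotationally symmetric: this forces $W\cap Z$ to be connected (an interval), and then a Mayer–Vietoris / van Kampen argument on $W = (W\setminus Z)\cup N$ (a tubular neighborhood of $W\cap Z$) shows $W\setminus Z$ is homotopy equivalent to something whose fundamental group is generated by a loop around $Z$; the key point is that any loop in $M\setminus Z$ bounds in $M$, and since $M$ is properly embedded containing the arc $W\cap Z$ which is unknotted, one shows the winding number about $Z$ of any loop in $M\setminus Z$ is zero. Hence by Lemma~\ref{thetagraphs}, each component of $M\setminus Z$ is a $\theta$-graph; that it meets every invariant circle (spanning) follows because the rotations of $M\setminus Z$ foliate \emph{all} of $W\setminus Z$. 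That there are exactly two components, interchanged by $\rho_Z$: near a point $p\in W\cap Z$, $M$ is a smooth surface through $p$ containing the line $Z$, so $M\setminus Z$ locally has two components (the two sides of the line in the surface), and the $180^\circ$ rotation $\rho_Z$ fixes $M$ (by uniqueness of the foliation extension, or because $M\setminus Z$ and $\rho_Z(M\setminus Z)$ are both leaves through the same points) and swaps the two local sheets; a connectedness argument along $W\cap Z$ then shows there are globally exactly two components swapped by $\rho_Z$.

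For the converse, suppose $M\setminus Z$ has two components $M^+, M^-$, each a spanning $\theta$-graph, with $M^- = \rho_Z(M^+)$. Since $M^+$ is a spanning $\theta$-graph, its rotations foliate $W\setminus Z$ already, so the rotations of $M\setminus Z = M^+\cup M^-$ also foliate $W\setminus Z$ (indeed $M^-$ is itself a rotate of $M^+$). It remains to check $W\cap Z\subset M$: writing $M^\pm$ via functions $\theta^\pm(r,z)$ on $\{(r,z): r>0,\ (r,0,z)\in W\}$ with $\theta^- = \theta^+ + \pi$, I would show that as $r\to 0^+$ the surface $M^+$ together with $M^-$ and the relevant portion of $Z$ forms a smooth embedded surface — this uses that $M$ is given to be smooth and properly embedded, so its closure near $Z$ is controlled, and the matching $\theta^- = \theta^+ + \pi$ is exactly the compatibility needed for $M^+\cup (W\cap Z)\cup M^-$ to be a $C^\infty$ surface across the axis; since $M$ is closed in $W$ and contains $M^+\cup M^-$ whose closure picks up points of $Z$, properness forces $W\cap Z\subset M$.

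The main obstacle I expect is the winding-number argument $1(b)$ in the forward direction — establishing that every closed curve in $M\setminus Z$ has winding number zero about $Z$. This is where the hypothesis that $W$ is \emph{simply connected} is essential, and it must be combined with the fact that $M$ is properly embedded and contains the arc $W\cap Z$. The subtlety is topological: a priori $M\setminus Z$ could be connected and "wind around," but the foliation hypothesis plus simple connectivity of $W$ rules this out. I would handle it by noting that a loop $\gamma\subset M\setminus Z$ with nonzero winding number about $Z$ would, after capping off in $M$ (possible since... one needs $M$ simply connected, or at least that $\gamma$ is null-homotopic in $M$ — here is a gap to watch: the lemma does not assume $M$ is a disk) — so more robustly, I would argue directly that $M\setminus Z$ is disconnected by using that $Z\subset M$ is a smooth arc in a smooth surface, which separates a neighborhood of $Z$ in $M$ into two pieces, and then show no leaf of the foliation can connect the two pieces (a path in $M\setminus Z$ from one side to the other would have to cross some invariant circle twice or be non-transverse, contradicting $1(a)$, which we already verified). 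Once $M\setminus Z$ is known to be disconnected with the two components swapped by $\rho_Z$, condition $1(b)$ for each component follows because a $\theta$-graph component cannot have a loop winding around $Z$ (such a loop would meet some invariant circle more than once).
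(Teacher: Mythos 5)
The paper itself dismisses this lemma with ``follows immediately from the definitions,'' so there is no argument of record to compare yours to; the question is whether your expanded argument is sound, and in the forward direction it is not. The first error is the claim that $M\setminus Z$ meets each rotationally invariant circle in at most one point. Because $M$ is a smooth surface containing the arc $W\cap Z$, near any $p_0\in W\cap Z$ the set $M\setminus Z$ has \emph{two} local sheets, so every sufficiently small invariant circle around $p_0$ meets $M\setminus Z$ in exactly two points; the foliation hypothesis (two rotated images of $M\setminus Z$ that share a point must coincide) then forces those two points to be antipodal, forces $M\setminus Z$ to be $\rho_Z$-invariant, and forces \emph{every} invariant circle to meet $M\setminus Z$ in exactly two antipodal points. (Your ``open and closed in the circle, hence empty, all of it, or a single point'' is also not a valid trichotomy: a clopen subset of a circle is empty or everything, and the intersection here is neither open nor clopen.) Consequently condition $1(a)$ of Lemma~\ref{thetagraphs} holds for each \emph{component} of $M\setminus Z$, not for $M\setminus Z$ itself, and your disconnectedness argument --- ``a path from one local sheet to the other would have to cross some invariant circle twice, contradicting $1(a)$'' --- is circular: $M\setminus Z$ genuinely does cross every invariant circle twice. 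What actually produces two components is that the projection $(x,y,z)\mapsto(\sqrt{x^2+y^2},z)$ restricts to a two-sheeted covering of $M\setminus Z$ over $V=\{(r,z):r>0,\ (r,0,z)\in W\}$, which is simply connected because $W$ is; the cover is therefore trivial, and $\rho_Z$, being a fixed-point-free deck transformation, is the sheet interchange.

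The second gap is your justification of condition $1(b)$: ``such a loop would meet some invariant circle more than once'' is false as a general principle. A surface can meet every invariant circle at most once and still carry a loop of nonzero winding number about $Z$, because the induced angle function is a priori only circle-valued, and a circle-valued function over a non--simply-connected base can have nonzero degree along a loop; this is precisely why Lemma~\ref{thetagraphs} lists $1(a)$ and $1(b)$ as independent conditions. The correct reason is again simple connectivity: each component projects diffeomorphically onto the simply connected region $V$, hence is itself simply connected, so the circle-valued angle function lifts to a single-valued real function --- which is statement $1(b)$ and the definition of a $\theta$-graph. This is the one place where the hypothesis that $W$ is simply connected must genuinely be deployed, and your write-up gestures at it but never actually uses it. The converse direction of your proposal is essentially correct.
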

 
\begin{proof} The lemma follows immediately from the definitions.
\end{proof}

We will focus on  spanning $\theta$-graphs in this paper,
 mindful that Lemma~\ref{extthetagraphs}
provides the link between these graphs and  their doubles, the surfaces  of
  Theorem~\ref{qualExistence} and its generalization,
   Theorem~\ref{laminationproperties} below.

\stepcounter{theorem}

\subsection{$\theta$-graphs considered as graphs in the simply connected covering of $W\setminus Z$}\label{ThetaGraphGraphs}



 We will have occasion in Section~\ref{SmoothConvergenceBoundary}  and in the Appendix to view  $\theta$-graphs as surfaces lying
 in the simply connected covering of 
 $W\setminus Z$. 
Suppose we have a domain
 \begin{equation} 
 V\subset\{(r,z):\, r>0,\,\, (r,0,z)\in W\}.
 \end{equation}
   For $p=(r,z)\in V$, and $\theta\in \RR$, let  $\pi:V\times \RR\to W\setminus Z$ be the mapping $\pi(p,\theta)=(r\cos\theta, r\sin\theta,z)$. Note that a rotation around $Z$ in $W$ corresponds to a vertical translation in $V\times\RR$.    In this setting, the definition of a $\theta$-graph $M$ 
     (Definition~ \ref{ThetaGraphDefinition})   is equivalent to the following: 
 \begin{itemize}
 \item[]   {\it The surface $M$ can be lifted to $V\times\RR$  as a graph of a smooth function $\theta:V\to\RR$. }
 \end{itemize}

 Suppose now that $W$ is endowed with a rotationally invariant metric $g$. Pulling back  $g$  to $V\times\RR$ produces a metric $g^*$ on $V\times\RR$ in which vertical translations  are isometries (corresponding to rotations in $W$). Note that the metric $g^*$ on $V\times \RR$ is not the product metric. 
A surface $M$ is $g$-minimal in $W\setminus Z$ if and only if  its lift $M^*$ is $g^*$-minimal in $V\times\RR$.

The  simple examples  in Remark~\ref{SimpleExamples} with  $W=\RR^3$  and $V=\{(r,z):\, r>0,\,\}$ are minimal surfaces. They lift to minimal surfaces $V\times\RR$:
The vertical halfplane in $\RR^3 $ bounded by $Z$ lifts to a horizontal planar slice  $\theta(r,z) =c$;
the  half-helicoid in $\RR^3$ with axis $Z$ lifts to the graph of $\theta (r,z) =z/\alpha$, $\alpha\neq 0$, 
a   halfplane that is neither vertical nor horizontal.  

\begin{theorem}[Boundary regularity theorem for minimal $\theta$-graphs]\label{theta-boundary-theorem}
Suppose that $M\subset W\setminus Z$ is a spanning $\theta$-graph that is minimal for a smooth,
rotationally invariant metric on $W$.  Then 
  $M\cup (W\cap Z)=\overline{M}\cap W$
is a smooth manifold-with-boundary, the boundary being $Z\cap W$.

Now suppose that $W$ is bounded and simply connected and that the metric extends smoothly
to $\overline{W}$.
Let $\Gamma=\overline{M}\cap \partial W$.  If $\Gamma\cup\rho_Z\Gamma$ is a smooth,
simple closed curve, then $\overline{M\cup\rho_ZM}$ 
is a smooth, embedded manifold-with-boundary.
\end{theorem}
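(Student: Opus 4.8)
The plan is to reduce both assertions to interior and boundary regularity statements for minimal surfaces that are already available, once we understand the geometry near $Z$. For the first assertion, the key point is that near a point $p\in W\cap Z$ the surface $M$, being a spanning $\theta$-graph, meets every small rotationally invariant circle about $Z$ exactly once and transversally. First I would work in the covering picture of Section~\ref{ThetaGraphGraphs}: the lift $M^*$ is the graph of $\theta:V\to\RR$ over a punctured half-disk in the $(r,z)$ half-plane, and it is $g^*$-minimal. The surface $M\cup(W\cap Z)$ is obtained from $M^*\cup(\{r=0\}\times\RR)$ by the covering map $\pi$, which near $Z$ is the (smooth, rotationally symmetric) ``polar-coordinate'' map. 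So it suffices to show that as $r\to 0$ the function $\theta(r,z)$ together with the axis glues up smoothly; equivalently, that $M\cup(W\cap Z)$ is a smooth manifold-with-boundary. For this I would invoke a removable-singularity / boundary-regularity argument: $M$ has locally finite area near $Z$ (the spanning condition forces the area of $M$ in a ball of radius $\rho$ about a point of $Z$ to be bounded, since $M$ meets each rotation-circle once), so by Allard-type compactness $\overline M\cap W$ is a minimal variety with free boundary — or better, $\rho_Z$-symmetric boundary — along $Z$. The transversality in condition~1(a) of Lemma~\ref{thetagraphs} guarantees that the tangent cone of $\overline M$ at any $p\in Z$ is a half-plane containing $Z$, hence multiplicity one, and then Allard's boundary regularity theorem (in the form appropriate for minimal surfaces meeting a smooth curve, cf.\ \cite{white-C1}, \cite{MeeksRegularity1}) upgrades this to smoothness of $M\cup(W\cap Z)$ as a manifold-with-boundary with boundary $Z\cap W$.

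For the second assertion I would proceed in three steps. Step one: apply the first assertion to get that $M\cup(W\cap Z)$, and likewise $\rho_ZM\cup(W\cap Z)$, are smooth up to $Z$; since both contain $W\cap Z$ as boundary and are each smooth there, their union $\widehat M := \overline{M\cup\rho_ZM}$ is, away from $\partial W$, a smooth closed embedded minimal surface in $W$ — one must check that $M$ and $\rho_ZM$ fit together smoothly across $Z$, which follows because $\rho_Z$ is an ambient isometry fixing $Z$ and the doubled surface has a well-defined tangent plane along $Z$ (the tangent plane of $M\cup(W\cap Z)$ at a point of $Z$ is a plane containing $Z$, and $\rho_Z$ fixes it). Step two: near $\partial W$, away from $Z$, $M$ is a $\theta$-graph with boundary curve $\Gamma$, and the hypothesis that $\Gamma\cup\rho_Z\Gamma$ is a smooth simple closed curve in $\partial W$ puts us in the setting of boundary regularity for minimal surfaces (the two-dimensional case of the Hildebrandt–Nitsche / Allard boundary regularity theory, or Hildebrandt's boundary-regularity theorem for minimal graphs): since $M$ is embedded and the metric extends smoothly to $\overline W$, $\overline M$ is smooth up to $\Gamma$. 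Step three: check the two ``corner'' points where $\Gamma\cup\rho_Z\Gamma$ meets $Z$ (i.e., where the curve crosses the axis): here one combines the boundary regularity along $Z$ from the first assertion with the boundary regularity along $\partial W$, using that $\Gamma\cup\rho_Z\Gamma$ is \emph{smooth} (not merely piecewise smooth) at those points, so that $\widehat M$ has a smooth boundary curve there and meets $\partial W$ in a smooth manifold-with-boundary. Assembling the three steps gives that $\widehat M=\overline{M\cup\rho_ZM}$ is a smooth, embedded manifold-with-boundary.

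The main obstacle I expect is the regularity of $M$ at the axis $Z$, i.e.\ the first assertion — everything else is a fairly standard application of boundary regularity theorems for minimal surfaces plus the reflection principle. The delicate point is that $Z\subset M$ is not an ``obstacle'' or a prescribed boundary in the usual variational sense; $M$ is smooth and minimal only in $W\setminus Z$, and a priori the function $\theta(r,z)$ could spiral or blow up as $r\to 0$ (indeed the half-helicoid $\theta=z/\alpha$ of Remark~\ref{SimpleExamples} shows $\theta$ itself need not extend continuously, even though the \emph{surface} does). So the real content is: the spanning + transversality hypotheses force the limit of the \emph{tangent planes} (not of $\theta$ itself) along $Z$ to exist and vary smoothly, and hence $M\cup(W\cap Z)$ is a smooth manifold-with-boundary. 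I would handle this by showing first, via the finite-area and transversality properties, that $\overline M\cap W$ is a $\rho_Z$-invariant integral varifold that is stationary across $Z$ with a half-plane tangent cone of multiplicity one at each axis point, and then appealing to the boundary regularity theorem along a $C^1$ curve from \cite{white-C1} (or the regularity results of \cite{MeeksRegularity1}) — which is exactly the machinery cited in the introduction of this paper for the general Colding–Minicozzi setting — to conclude smoothness.
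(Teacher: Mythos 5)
Your overall architecture (the hard part is regularity at the axis; the rest is reflection plus standard boundary regularity along $\Gamma$) matches the paper's, but your proof of the first assertion has a genuine gap, and it sits exactly where the real difficulty lies. First, your claim that the spanning condition ``forces the area of $M$ in a ball of radius $\rho$ about a point of $Z$ to be bounded, since $M$ meets each rotation-circle once'' is not a valid inference: a $\theta$-graph meets each circle once and yet can have arbitrarily large area in a fixed ball (half-helicoids of small pitch already show this), and for a general spanning $\theta$-graph the area near $Z$ is not a priori even finite, since the area element is $\sqrt{1+r^2|\nabla\theta|^2}\,dr\,dz$ and $\theta$ may blow up as $r\to 0$. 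Without a mass bound you have no monotonicity formula up to $Z$, hence no tangent cones and no access to Allard's boundary theorem. Second, and more seriously, the assertion that transversality to the rotation circles ``guarantees that the tangent cone of $\overline M$ at any $p\in Z$ is a half-plane containing $Z$, hence multiplicity one'' is not a consequence of anything you have established: transversality is a statement about $M$ away from $Z$, and under blow-up at $p\in Z$ the limit could a priori be a half-plane of higher multiplicity, a union of half-planes, or fail to exist. Proving it is a multiplicity-one half-plane is essentially equivalent to the theorem itself. The references you lean on do not supply this either: \cite{white-C1} and \cite{MeeksRegularity1} concern the structure of the curvature blow-up set for \emph{sequences} of embedded disks, not boundary regularity of a single minimal surface along a curve contained in its closure.

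The paper avoids all of this by quoting the boundary regularity theorem of \cite{white-newboundary}: if $D$ is a properly embedded minimal surface in $U\setminus C$ and $D\cup C$ is merely a \emph{topological} manifold-with-boundary with boundary $C$, then $D\cup C$ is smooth. No area, density, or tangent-cone hypotheses are needed. The only thing to check is the topological hypothesis, and that \emph{does} follow directly from the spanning $\theta$-graph structure (the map $(r,z)\mapsto(r\cos\theta(r,z),r\sin\theta(r,z),z)$ extends to a local homeomorphism of a half-plane neighborhood onto $M\cup(Z\cap W)$), after localizing so that $W$ is simply connected. If you want to avoid citing that theorem, you must genuinely establish the mass bound and the tangent cone classification at the axis; as written, both are asserted rather than proved. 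Your treatment of the second assertion (reflection across $Z$, Hardt--Simon/Allard regularity along $\Gamma$, and Allard at the two points where the curve meets $Z$, using smoothness of $\Gamma\cup\rho_Z\Gamma$ there) is consistent with how the paper handles the analogous corners in Theorem~\ref{UniqueEmbeddedDisk} and is fine once the first assertion is in place.
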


The first assertion is local, so it suffices to consider the case when $W$ is a simply connected,
which implies that $M$ is a disk.
(Otherwise, replace $W$ and $M$ by $\BB(p,r)\subset W$ and $M\cap\BB(p,r)$, 
where $p\in Z\cap W$.)  

Thus Theorem~\ref{theta-boundary-theorem}
 is an immediate consequence of the following more general boundary regularity theorem:

\begin{theorem}\cite{white-newboundary}.
Suppose that $U$ is an open subset of a smooth Riemannian $3$-manifold,
that $C$ is a smooth,  properly embedded curve in $U$, that $D$ is a 
properly embedded minimal surface in $U\setminus C$, and that $D\cup C$ is topologically a manifold
with boundary, the boundary being $C$.  Then $D\cup C$ is a smooth manifold-with-boundary.
\end{theorem}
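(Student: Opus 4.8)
The assertion is local, so the plan is to fix a point $p\in C$ and show that $D\cup C$ is a smooth manifold-with-boundary in some ambient neighborhood of $p$; running this at every $p\in C$ then gives the theorem. First I would use the topological manifold-with-boundary hypothesis to choose a neighborhood $N$ of $p$ in the ambient $3$-manifold for which $(D\cup C)\cap N$ is homeomorphic to a closed half-disk, with $C\cap N$ corresponding to the bounding diameter; after shrinking, I would arrange that $\overline N$ is compact and lies in a single coordinate chart in which $C$ is a $C^\infty$ curve and the metric is $C^\infty$ and as close to Euclidean as desired. In this picture $D\cap N$ is a single connected embedded minimal disk whose closure in $N$ meets $C$ only along $C\cap N$.

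The heart of the matter --- and the step I expect to be the main obstacle --- is to prove that $D$ has \emph{locally finite area near $C$}, i.e.\ $\area(D\cap N')<\infty$ for neighborhoods $N'\Subset N$ of $p$. This is exactly where minimality must be used, since nothing in the topological hypothesis by itself prevents $D$ from accumulating onto $C$ with infinite area. I would establish the bound by a monotonicity argument: at each \emph{interior} point $q\in D$ the density ratio $s\mapsto \area(D\cap B(q,s))/(\pi s^2)$ is, up to a controllable metric error, nondecreasing in $s$ and $\ge 1$ for $s$ up to $\dist(q,C)$, and combining this with the topological structure of $(D\cup C)\cap N$ --- which limits how many sheets of $D$ can cluster near a point of $C$ and how fast $D$ can wind around $C$ --- one arrives at $\area(D\cap B(c,s))=O(s^2)$ for $c\in C\cap N$. (The half-helicoid $\{(\rho\cos(z/\alpha),\rho\sin(z/\alpha),z):\rho\ge 0\}$, with boundary the vertical axis, is the model to keep in mind: it winds infinitely often near its axis yet has finite area in bounded regions, precisely because it is a manifold-with-boundary rather than a half-disk accumulating wildly on its boundary.) Granted this, the integral varifold $V$ carried by $D$ is well defined in $N$, stationary in $N\setminus C$, with $C$ (with multiplicity one) as its only boundary in $N$, and the boundary monotonicity formula shows that the density $\Theta(V,q)$ exists for every $q\in C\cap N$.

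The remaining steps are standard. I would then show $\Theta(V,q)=\tfrac12$ at every $q\in C\cap N$: each tangent varifold to $V$ at $q$ is a stationary cone containing the tangent line to $C$, arising as a blow-up limit of embedded topological half-disks, and since the manifold-with-boundary structure is scale-invariant this forces the cone to be a single multiplicity-one half-plane. Allard's boundary regularity theorem then applies --- $V$ is stationary in $N\setminus C$, has locally bounded density ratios, boundary the $C^\infty$ curve $C$, and density $\tfrac12$ along $C$ --- and yields that near $p$ the support $D\cup C$ of $V$ is a $C^{1,\alpha}$ embedded manifold-with-boundary with boundary $C$. Finally, realizing $D$ near $p$ as a $C^{1,\alpha}$ graph solving the minimal surface equation of the smooth ambient metric with $C^\infty$ boundary data on $C$, Schauder estimates bootstrap $D\cup C$ to $C^\infty$ up to $C$. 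Since $p\in C$ was arbitrary, $D\cup C$ is a smooth manifold-with-boundary.

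In the two-dimensional case one could alternatively parametrize $D$ near $p$ by a half-disk via a conformal harmonic map, use the topological hypothesis to see that the parametrization extends continuously to the straight edge with image in $C$, and invoke classical boundary regularity for minimal surfaces with a prescribed smooth boundary curve; but local finiteness of area and the good boundary behavior of the parametrization are again the substantive inputs, so the varifold argument above seems cleaner and is dimension-free.
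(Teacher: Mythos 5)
First, a point of comparison that isn't really a comparison: the paper does not prove this statement. It is quoted verbatim from \cite{white-newboundary} (``On boundary regularity for minimal varieties,'' listed as in preparation), so there is no in-paper proof to measure your argument against. Judged on its own terms, your outline is the natural Allard-based strategy, but it has a genuine gap at exactly the step you yourself flag as the heart of the matter.

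The gap is the local area bound near $C$. Interior monotonicity gives only \emph{lower} bounds for the area of $D$ in balls about interior points, and the appeal to ``the topological structure of $(D\cup C)\cap N$, which limits how many sheets of $D$ can cluster near a point of $C$ and how fast $D$ can wind around $C$,'' is not correct: the manifold-with-boundary hypothesis places no restriction whatsoever on the winding. For example, the surface $\{(\rho\cos(1/\rho),\rho\sin(1/\rho),z):\rho>0\}$ together with the $z$-axis is homeomorphic to a closed half-plane (the map $(x,y,z)\mapsto(\sqrt{x^2+y^2},z)$ is a continuous inverse of the obvious parametrization), yet its area element is $\sqrt{1+\rho^{-2}}\,d\rho\,dz\sim \rho^{-1}d\rho\,dz$, so it has infinite area in every neighborhood of every axis point. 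Thus the topological hypothesis alone permits unbounded winding and infinite local mass; only minimality can exclude this, and your sketch does not say how it does. The same difficulty undermines the next step: without a mass bound there is no tangent varifold at $q\in C$ to speak of, and even granting one, blow-up limits of embedded topological half-disks need not be topological half-disks (such structure is not preserved under varifold convergence), so ``the manifold-with-boundary structure is scale-invariant'' does not force the tangent cone to be a single multiplicity-one half-plane. Ruling out, say, a half-plane of multiplicity at least $2$, or a plane plus a half-plane, requires an a priori density upper bound at points of $C$ --- which is essentially equivalent to what is being proved and is the real content of White's theorem. Until the finite-area and density-$\tfrac12$ steps are supplied with actual arguments that use minimality, the reduction to Allard's boundary theorem and the Schauder bootstrap, which are indeed routine, cannot get started. (Your closing remark about the conformal-parametrization alternative has the same problem: continuity up to the edge and finite energy are again the substantive inputs.)
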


\stepcounter{theorem}
\subsection{Properties of 
 limit laminations of sequences of minimal spanning $\theta$-graphs}
\label{limits of theta graphs} 
We now state and prove the main theorem of this section.

\begin{theorem}\label{laminationproperties}
Suppose that the open unit ball $\BB$ in $\RR^3$ is 
 endowed with a smooth Riemannian metric that is rotationally invariant around $Z$. 
Suppose that $D_n$ is a sequence of minimal spanning $\theta$-graphs
  in $\BB\setminus Z$.
Then, after passing to a subsequence, the
$D_n$  converge smoothly on compact subsets of $\BB\setminus Z$
 to a minimal lamination $\Ll$ of $\BB$ with the following properties:
\begin{enumerate}[\upshape \qquad 1.]
\item\label{main-theorem-item1} Each leaf of $\Ll$ is either rotationally symmetric about $Z$
 or is a $\theta$-graph.
\item\label{main-theorem-item2} Each rotationally invariant circle in $\BB$ either is contained
in a rotationally invariant leaf of $\Ll$ or else intersects $\Ll$ transversely in a single point.
\item\label{main-theorem-item3} 
The limit leaves of $\Ll$ are precisely the leaves that are rotationally invariant
 about $Z$.
\end{enumerate}
Let $\LL'$ be the set of rotationally invariant leaves in $\LL$, and let $K$
be the set of points in $\BB\cap Z\cap \overline{\cup\LL'}$.
\begin{enumerate}[\upshape \qquad 1.]
\setcounter{enumi}{3}
\item\label{main-theorem-item4}  Each connected 
component $\mathcal{O}$ of $\BB\setminus\overline{\cup \LL'}$
 contains a unique leaf $L$ of $\Ll$.
 That leaf is a spanning $\theta$-graph in $\mathcal{O}$, and 
  $\mathcal{O}$ contains no other points of $\LL$.
  Furthermore, $\overline{L}\cap\mathcal{O}$ is a smooth manifold-with-boundary (the
 boundary being $Z\cap \mathcal{O}$), and $\overline{D_n}\cap \mathcal{O}$ converges 
  to $\overline{L}\cap \mathcal{O}$
 smoothly on compact subsets of $\mathcal{O}$.
 \item\label{main-theorem-schwarz-item} Each component $I$ of $(\BB\cap Z) \setminus K$ lies on 
the boundary of 
a non-limit leaf $L \in\Ll$.  The leaf $L$ can be
 extended smoothly by Schwarz reflection across $Z$, 
 and no point in $I$ lies in the closure of  $\Ll\setminus L$.
 \item\label{main-theorem-puncture-item}
 The lamination $\Ll'$ extends smoothly to a lamination of $\BB$.
 If $p\in K$, then there is a unique leaf $L(p)\in \Ll'$ whose closure contains $p$, and 
 $L(p)\cup\{p\}$ is a smooth surface that meets $Z$ orthogonally.
\item\label{main-theorem-blowup-item}   The curvature blowup  of the $D_n$ 
occurs precisely at the points of  $K$. 
 \end{enumerate}
\end{theorem}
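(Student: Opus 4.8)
The plan is to prove Theorem~\ref{laminationproperties} by first extracting a good subsequence via standard compactness, then analyzing the limit lamination leaf-by-leaf using the structure provided by the $\theta$-graph condition. The overall strategy is to separate the leaves of $\Ll$ into rotationally invariant ones and $\theta$-graphs, show these classes are exactly the limit leaves and non-limit leaves respectively, and then handle regularity across $Z$.

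\textbf{Step 1: Compactness and the dichotomy (items 1--3).} On compact subsets of $\BB\setminus Z$, the surfaces $D_n$ — being minimal spanning $\theta$-graphs — have locally bounded area (each rotationally invariant circle meets $D_n$ once, which controls area by an integral-geometric/coarea argument) and one-sided curvature estimates are not automatic, so I would instead pass to a subsequence converging as a \emph{lamination} in $\BB\setminus Z$ (possibly with a curvature blow-up set inside $\BB\setminus Z$, which I expect to rule out, or at least to show migrates to $Z$, using the spanning property). The key point: since each $D_n$ meets every rotationally invariant circle $S$ exactly once and transversely, the limit $\Ll$ meets $S$ either in a single transverse point or in an arc/the whole circle; an arc is impossible for a minimal leaf unless the leaf is rotationally invariant (a minimal surface tangent to the rotation field along a curve must be invariant, by unique continuation), giving item~2 and the dichotomy in item~1. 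For item~3, the limit leaves are by definition those that are limits from one side of nearby leaves; a $\theta$-graph leaf $L$ cannot be a limit leaf because a sequence of disjoint $\theta$-graphs converging to it would have to cross a common rotationally invariant circle in distinct points converging to the point of $L$ — fine — but then one exploits that the stable/limit-leaf leaves carry a positive Jacobi field (the limit of the separating functions), while along a $\theta$-graph the normal direction is never the rotation direction, forcing the limit leaf to be rotationally invariant; conversely a rotationally invariant leaf is automatically a limit leaf since it is approached by the $D_n$ themselves (which are not rotationally invariant) from both sides.

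\textbf{Step 2: Structure of the complement (items 4--5).} For a component $\mathcal{O}$ of $\BB\setminus\overline{\cup\LL'}$: by item~2 and the fact that $\LL'$ is closed, the other leaves of $\Ll$ in $\mathcal{O}$ are $\theta$-graphs; two disjoint $\theta$-graphs in $\mathcal{O}$ would leave a rotationally invariant circle in $\mathcal{O}$ unhit or hit twice — contradicting that the $D_n$ hit it once and converge — so there is exactly one leaf $L$, a spanning $\theta$-graph of $\mathcal{O}\setminus Z$. Its boundary regularity across $Z\cap\mathcal{O}$ is exactly Theorem~\ref{theta-boundary-theorem} (first assertion), and $\overline{D_n}\cap\mathcal{O}\to\overline{L}\cap\mathcal{O}$ smoothly up to $Z$ follows from that boundary regularity theorem applied uniformly (one needs the boundary estimates of Section~\ref{SmoothConvergenceBoundary}, or a direct barrier argument near $Z$). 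Item~5 is then the statement that a component $I$ of $(\BB\cap Z)\setminus K$ sits in the closure of one such $\mathcal{O}$ and hence on the boundary of its leaf $L$; Schwarz reflection applies because $L$ meets $Z$ orthogonally (again from the boundary regularity theorem), and nothing else of $\Ll$ accumulates at $I$ since $I$ is disjoint from $K=\overline{\cup\LL'}\cap Z$.

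\textbf{Step 3: Regularity at $K$ (items 6--7) and curvature blow-up (the final item).} That $\Ll'$ extends across $Z$ to a smooth lamination of $\BB$ with a unique leaf $L(p)$ through each $p\in K$, meeting $Z$ orthogonally, is precisely the Colding--Minicozzi / Meeks / white-C1 regularity package quoted in the introduction — but here it also follows more cheaply from the boundary regularity theorem (Theorem~\ref{theta-boundary-theorem} first assertion) applied to each rotationally invariant leaf, which is itself a $\theta$-graph-type surface or a rotationally invariant punctured surface, giving smoothness of $L(p)\cup\{p\}$; uniqueness comes from item~2 (a rotationally invariant circle shrinking to $p$ meets $\Ll$ once). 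Finally, for the curvature blow-up statement: away from $K$ the convergence $D_n\to\Ll$ is smooth and $\Ll$ is a smooth lamination, so curvatures stay bounded locally — hence no blow-up off $K$. Conversely, at a point $p\in K$, suppose the curvatures of $D_n$ stayed bounded near $p$; then by the boundary regularity theorem and standard compactness the $D_n$ (together with $Z$) would converge smoothly near $p$ to a smooth minimal surface through $p$, which by item~2 would have to be rotationally invariant near $p$ — but then $p$ would not be a point where distinct rotationally invariant leaves accumulate, contradicting $p\in K=\overline{\cup\LL'}\cap Z$. The main obstacle, I expect, is Step~1: establishing that the limit object is genuinely a smooth lamination of \emph{all} of $\BB$ (equivalently, controlling the curvature blow-up set and showing it lies in $Z$) requires combining the spanning-$\theta$-graph geometry with the boundary regularity theorem in a uniform way, and making precise the claim that a limit leaf must be rotationally invariant — this is where the bulk of the real work lies, and where the results of Section~\ref{SmoothConvergenceBoundary} are essential.
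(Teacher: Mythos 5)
The central missing idea is the one that makes the whole theorem work: each $D_n$ is \emph{stable}, because the rotational Killing field $\partial/\partial\theta$ induces a Jacobi field $J_n=\langle\partial/\partial\theta,\nu_n\rangle$ on $D_n$ that never vanishes (transversality to every rotationally invariant circle is exactly the statement $J_n\neq0$). Stability plus Schoen's curvature estimate gives uniform curvature bounds on compact subsets of $\BB\setminus Z$, hence smooth subsequential convergence to a lamination with \emph{no} interior blow-up set --- the very point you flag as ``the main obstacle'' and leave unresolved. The same Jacobi field drives items 1--2: on each leaf $L$ the limit Jacobi field has a sign, so by the maximum principle it either vanishes nowhere (then $L$ is transverse to every invariant circle, and the winding-number argument of Lemma~\ref{thetagraphs} makes it a $\theta$-graph) or vanishes identically (then $L$ is rotationally invariant). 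Your substitute --- ``tangent to the rotation field along a curve implies invariant by unique continuation'' --- does not handle a leaf tangent to a circle at an isolated point, which is precisely the degenerate case the Jacobi-field argument excludes. Also, being ``approached by the $D_n$'' does not make a rotationally invariant leaf a limit leaf; one must produce points of $\Ll\setminus L$ accumulating on $L$, which the paper does by intersecting nearby invariant circles (disjoint from $L$ by rotational invariance) with $\Ll$.

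Two further steps would fail as written. For item~\ref{main-theorem-puncture-item} you propose applying Theorem~\ref{theta-boundary-theorem} to the rotationally invariant leaves; but that theorem concerns spanning $\theta$-graphs, and a rotationally invariant leaf is not a $\theta$-graph, so it says nothing about the regularity of $\Ll'$ at points of $K$. The paper instead dilates about $p\in K$ and uses stability again: the rescaled invariant leaves converge to a stable rotationally invariant minimal surface in $\RR^3$, which must be a horizontal plane (catenoids are unstable), forcing the tangent planes to become horizontal. For item~\ref{main-theorem-blowup-item}, your converse argument claims a bounded-curvature limit near $p\in K$ ``would have to be rotationally invariant near $p$ by item 2'' --- it would not: the limit contains $Z$, so its tangent plane at $p$ is vertical. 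The correct contradiction is that bounded curvature forces the tangent planes of all nearby leaves to have slope bounded below, while $L(p)\cup\{p\}$ from item~\ref{main-theorem-puncture-item} has horizontal tangent plane at $p$. Finally, the smooth convergence $\overline{D_n}\cap\mathcal{O}\to\overline{L}\cap\mathcal{O}$ up to $Z$ is not supplied by Section~\ref{SmoothConvergenceBoundary} (which concerns $\partial\BB$, not $Z$); the paper invokes Theorem~\ref{white-allard-bdry-theorem}.
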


In the following corollary (and throughout the paper), $\rho_Z$ denotes $180^\circ$ 
rotation about $Z$.

\begin{corollary}
The doubled disks $D_n\cup (\BB\cap Z)\cup \rho_ZD_n$
converge smoothly in $\BB\setminus K$ to the lamination $\LL^*$ 
obtained from $\LL$ as follows: for each connected component $\mathcal{O}$
of $\BB\setminus\overline{\cup \LL'}$, we replace the leaf $L$ in $\mathcal{O}$ 
(see Statement~\ref{main-theorem-item4})
by  $\overline{L}\cup\rho_Z\overline{L}\cap \mathcal{O}$.   
In particular, $\LL$ and $\LL^*$ have the same rotationally invariant leaves.
\end{corollary}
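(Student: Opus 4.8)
The plan is to deduce the corollary from Theorem~\ref{laminationproperties}, applied simultaneously to the sequence $\{D_n\}$ and to its reflection $\{\rho_Z D_n\}$, and then to glue the two resulting limits along the axis by Schwarz reflection.

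First I would set up the reflected sequence. Since the ambient metric is rotationally invariant, $\rho_Z$ is an isometry of $\BB$ carrying each minimal spanning $\theta$-graph $D_n$ to another minimal spanning $\theta$-graph $\rho_Z D_n$; by Lemma~\ref{extthetagraphs} the doubled disk $\widetilde D_n := D_n\cup(\BB\cap Z)\cup\rho_Z D_n$ is a surface satisfying \eqref{extended}. By Theorem~\ref{theta-boundary-theorem}, $\overline{D_n}\cap\BB = D_n\cup(\BB\cap Z)$ is a smooth manifold-with-boundary whose boundary is the geodesic $Z\cap\BB$, and since $d\rho_Z$ fixes $T_qZ$ and negates the directions normal to $Z$, it preserves the tangent plane $T_q\overline{D_n}\supset T_qZ$ at every $q\in Z\cap\BB$; hence Schwarz reflection shows $\widetilde D_n = (\overline{D_n}\cup\rho_Z\overline{D_n})\cap\BB$ is a smooth, properly embedded minimal surface in $\BB$. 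Passing to a further subsequence, Theorem~\ref{laminationproperties} applies to $\{\rho_Z D_n\}$, and since $\rho_Z$ is a homeomorphism of $\BB$ the limit lamination is $\rho_Z\LL$, with curvature blow-up set $\rho_Z K$. The rotationally invariant leaves $\LL'$ of $\LL$ are fixed by $\rho_Z$, so $\rho_Z\LL'=\LL'$, $\rho_Z(\overline{\cup\LL'})=\overline{\cup\LL'}$ and $\rho_Z K=K$; in particular $\rho_Z$ permutes the components $\mathcal{O}$ of $\BB\setminus\overline{\cup\LL'}$, and, each $\mathcal{O}$ being rotationally invariant, $\rho_Z\mathcal{O}=\mathcal{O}$.

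Next I would identify the limit locally. Fix $p\in\BB\setminus K$. If $p$ lies on a rotationally invariant leaf $L'\in\LL'$, then near $p$ both $D_n$ and $\rho_Z D_n$ converge smoothly to $L'$, which is $\rho_Z$-invariant, so $\widetilde D_n$ converges smoothly to $L'$ near $p$; and $L'$ is a leaf of $\LL$ and of $\LL^*$. Otherwise $p$ lies in some component $\mathcal{O}$; let $L$ be the unique leaf of $\LL$ in $\mathcal{O}$ (Statement~\ref{main-theorem-item4}), a spanning $\theta$-graph. By the symmetry above, $\rho_Z L$ is the unique leaf of $\rho_Z\LL$ in $\mathcal{O}$, and Statement~\ref{main-theorem-item4}, applied to each of the two sequences, gives $\overline{D_n}\cap\mathcal{O}\to\overline{L}\cap\mathcal{O}$ and $\overline{\rho_Z D_n}\cap\mathcal{O}\to\overline{\rho_Z L}\cap\mathcal{O}$ smoothly on compact subsets of $\mathcal{O}$. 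If $p\notin Z$, then $L\setminus Z$ and $\rho_Z L\setminus Z$ are disjoint (a rotationally invariant circle meets them at antipodal points), so near $p$ at most one of them occurs and $\widetilde D_n$ converges smoothly near $p$ to the corresponding piece of $(\overline{L}\cup\rho_Z\overline{L})\cap\mathcal{O}$ --- possibly to the empty set. If $p\in Z\cap\mathcal{O}$, then this arc is a component $I$ of $(\BB\cap Z)\setminus K$; by Statement~\ref{main-theorem-schwarz-item}, $L$ extends smoothly across $Z$ by Schwarz reflection, and since the relevant reflection is precisely $\rho_Z$ (rotation by $\pi$ about the geodesic $Z$), that extension is $(\overline{L}\cup\rho_Z\overline{L})\cap\mathcal{O}$, which is by definition the leaf of $\LL^*$ through $p$.

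Finally I would handle the convergence across the axis, which is the one step requiring genuine care: everything preceding is bookkeeping with the already-established parts of Theorem~\ref{laminationproperties} together with the fact that $\rho_Z$ is an ambient isometry fixing $Z$, but near a point $p\in I$ one must upgrade the smooth convergence of the ``upper halves'' $\overline{D_n}$ up to $I$ to smooth convergence of the $\rho_Z$-symmetric doubles $\widetilde D_n$ across $I$. I would do this by a bounded-curvature compactness argument: since $p\in\mathcal{O}$, the convergence $\overline{D_n}\cap\mathcal{O}\to\overline{L}\cap\mathcal{O}$ is smooth on a compact neighborhood of $p$, so the second fundamental forms of the $\overline{D_n}$, and hence (by $\rho_Z$-symmetry) those of the $\widetilde D_n$, are uniformly bounded near $p$; after passing to a subsequence $\widetilde D_n$ converges smoothly near $p$ to a smooth minimal surface $\Sigma$ containing $I$, which is $\rho_Z$-invariant as a limit of $\rho_Z$-invariant surfaces. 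One component of $\Sigma\setminus Z$ near $p$ is the limit of $D_n\setminus Z$ and hence equals $L$ there (uniqueness of limits), so by $\rho_Z$-invariance $\Sigma = (\overline{L}\cup\rho_Z\overline{L})$ near $p$, the leaf of $\LL^*$ through $p$. (Equivalently, in Fermi coordinates along $Z$ near $p$, in which $\rho_Z$ negates the two coordinates transverse to $Z$, each $\widetilde D_n$ is a graph whose graphing function is odd in the transverse-to-$Z$ parameter and is therefore determined by its restriction to one side.) Assembling the three cases shows that $\widetilde D_n$ converges smoothly on compact subsets of $\BB\setminus K$ to a minimal lamination that is leaf-for-leaf equal to $\LL^*$: the rotationally invariant (equivalently, by Statement~\ref{main-theorem-item3}, the limit) leaves of $\LL$ are unchanged --- so $\LL$ and $\LL^*$ have the same rotationally invariant leaves --- while in each component $\mathcal{O}$ the leaf $L$ is replaced by $(\overline{L}\cup\rho_Z\overline{L})\cap\mathcal{O}$.
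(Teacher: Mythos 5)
Your argument is correct and follows the same route the paper intends: the paper offers no separate proof of this corollary, treating it as an immediate consequence of Statements~\ref{main-theorem-item3}--\ref{main-theorem-blowup-item} of Theorem~\ref{laminationproperties} together with the $\rho_Z$-symmetry of the setup, and your write-up is exactly that deduction made explicit (smooth convergence of $\overline{D_n}\cap\mathcal{O}$ up to $Z\cap\mathcal{O}$ from Statement~\ref{main-theorem-item4}, Schwarz reflection across $Z$ from Statement~\ref{main-theorem-schwarz-item}, and invariance of $\LL'$, $K$, and each $\mathcal{O}$ under $\rho_Z$). The only cosmetic remark is that no further subsequence is needed for $\{\rho_Z D_n\}$, since $\rho_Z$ is an isometry and the limit of $\rho_Z D_n$ is automatically $\rho_Z\LL$ along the already-chosen subsequence.
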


\begin{proof}[Proof of theorem] 
The rotational Killing field $\partial /\partial\theta$ defines a Jacobi field $J_n$ on each $D_n$.
Note that $D_n$ is stable because  $J_n$ never vanishes.  (In fact $D_n$ has 
a certain area-minimizing property: see Corollary~\ref{existence-corollary}.)
Thus the curvature is uniformly bounded on compact subsets of $\BB\setminus Z$,
so a subsequence converges smoothly to a lamination $\Ll$.  In particular, there is no
curvature blowup in $\BB\setminus Z$.
 By
relabeling, we may assume that the subsequence is the original sequence.

Let $L$ be a leaf of $\Ll$.  As above, there  is a Jacobi field $J$ on $L$, defined by the rotational vector field $\partial/\partial \theta$. This Jacobi field  does
not change sign on $L$ since $J_n$ does not vanish on $D_n$.
Thus by the maximum principle, it either vanishes nowhere on $L$
or it vanishes everywhere on $L$.  In the first case, $L$ is transverse to every circle $S$ that
is rotationally invariant about $Z$. In the second case, $L$ is rotationally invariant
about $Z$.  

Let $S$ be a rotationally invariant circle in $\BB$.
Since $S$ is compact and since it intersects each $D_n$, it must also intersect $\LL$.
 Using the previous paragraph, 
 we conclude that $S$ either intersects $\Ll$ transversally, 
 or it lies entirely in a rotationally invariant leaf of $\Ll$.
If the circle $S$ intersects $\Ll$ transversely, then it intersects $\Ll$ in a single point
since it intersects each $D_n$ in a single point.  Thus the leaf $L$ through that
point is not a limit leaf.   Let $U$ be the union of $L$ and its rotated images.
The convergence of $D_n\cap U$ to $L\cap U$ is smooth and single-sheeted,
so any closed curve  $\alpha\subset L$ is a limit of closed curves $\alpha_n$ in $D_n$.  
By Lemma~\ref{thetagraphs} (Statement~$1(b)$), 
the winding number of $\alpha_n$ about $Z$ is $0$.  
Thus the winding number of $\alpha$ about $Z$ is also $0$.
By Lemma~\ref{thetagraphs},  $L$ is a $\theta$-graph.

We have proved Statements~\ref{main-theorem-item1}
and~\ref{main-theorem-item2}, 
and we have established that limit leaves are rotationally invariant.  
To prove Statement~\ref{main-theorem-item3}, 
we must establish that  rotationally invariant leaves of $\Ll$ are limit leaves.
Suppose that $L$ is  a rotationally invariant leaf of $\Ll$, and let $p_n$ be a sequence of points in $\BB\setminus (Z\cup L)$ converging
to a point in $L$.  The rotationally invariant circle though $p_n$ contains
a point $q_n$ of the the lamination $\Ll$.  Since $p_n$ is not in $L$, neither is $q_n$.
  By passing to subsequence, we may assume
that the $q_n$ converge to a point $q\in L$.   We have shown that $L$ contains a point $q$
that is a limit of points $q_n$ in $\LL\setminus L$.  Thus $L$ is a limit leaf.

To prove Statement~\ref{main-theorem-item4},
let $\mathcal{O}$ be a connected component of $\BB\setminus \overline{\cup \Ll'}$.
By Statements~\ref{main-theorem-item1}, \ref{main-theorem-item2}, and 
\ref{main-theorem-item3}, for each point $(x,z)$ in
\[
     U:= \{ (x,z): x>0, \, (x,0,z)\in \mathcal{O}  \},
\]
the rotationally invariant circle through $(x,0,z)$ intersects the lamination in 
a single point $F(x,z)$, and $F$ defines a smooth embedding of $U$ into $\mathcal{O}$.
Since $\mathcal{O}$ is connected, $U$ is connected, and therefore $L=F(U)$
is connected.  In particular, $L$ is a leaf of $\LL$ rather than a union of leaves.
By Statement~\ref{main-theorem-item1}, $L$ is a $\theta$-graph.
We have already seen that it intersects each rotationally invariant circle in $\mathcal{O}$.
Thus $L$ is a spanning $\theta$-graph in $\mathcal{O}$.
By Theorem~\ref{theta-boundary-theorem}, $\overline{D_n}\cap \mathcal{O}$ and 
$\overline{L}\cap\mathcal{O}$ are smooth manifolds-with-boundary,
the boundary being $Z\cap\mathcal{O}$.
This proves Statement~\ref{main-theorem-item4}, except
for the assertion about smooth convergence.

We already know smooth convergence away from $Z$, so to prove the smooth
convergence in Statement~\ref{main-theorem-item4}, it suffices to consider
the case when $\BB\cap Z$ is nonempty.  
 In that case, the smooth convergence  is an 
immediate consequence of the following general theorem (which is true in arbitrary 
dimensions and codimensions):

\begin{theorem}\label{white-allard-bdry-theorem}\cite{white-controlling}*{Theorem~6.1}
Suppose that $M$ is a smooth, connected manifold-with-boundary properly embedded in
an open subset $\mathcal{O}$ of a smooth Riemannian manifold, and suppose that 
of $\mathcal{O}\cap \partial M$ is nonempty.  Suppose that $M_n$ is a sequence of smooth minimal manifolds-with-boundary
that are
properly embedded in $\mathcal{O}$ and suppose 
that $\mathcal{O}\cap \partial M_n$ converges smoothly to 
$\mathcal{O}\cap \partial M$.
Suppose also that
\begin{equation}\label{eq:set-convergence}
  \{p\in \mathcal{O}: \liminf\dist(p,M_n)=0\} \subset M.
\end{equation}
Then $M_n$ converges smoothly to $M$ on compact subsets of 
$\mathcal{O}$.
\end{theorem}

To apply Theorem~\ref{white-allard-bdry-theorem}, we let
 $M:=\overline{L}\cap \mathcal{O}$ and 
$M_n:= \overline{D_n}\cap \mathcal{O}$.  Then $\partial M=\partial M_n=Z\cap\mathcal{O}$,
and~\eqref{eq:set-convergence}
 holds because (in our situation) $M_n$ converges smoothly to $M$ on compact
subsets of $\mathcal{O}\setminus Z$.  Thus the smooth convergence 
asserted by Theorem~\ref{white-allard-bdry-theorem} holds. This completes the 
proof of Statement~\ref{main-theorem-item4}.

Statement~\ref{main-theorem-schwarz-item} follows immediately
from Statement~\ref{main-theorem-item4} by letting $\mathcal{O}$
be the connected component of $\BB\setminus \overline{\cup\LL'}$ containing the interval $I$.

We now prove Statement~\ref{main-theorem-puncture-item}.
Let $p\in K$.  By defintion of $K$, there is a sequence $p_n\in \cup\Ll'$ converging
to $p$.
Let $\alpha_n$ be the angle that the tangent plane to $\Ll'$ at $p_n$ makes
with the horizontal. 
To prove Statement~\ref{main-theorem-puncture-item}, it suffices to show that $\alpha_n\to 0$.
Let $q_n$ be the point in $Z$ nearest to $p_n$.
Translate the limit leaf through $p_n$ by $-q_n$ and dilate by $1/|p_n-q_n|$ to get
a surface $\Sigma_n$.  Note that $\Sigma_n$ is rotationally invariant and stable.  Since
it is stable, the norm of the second fundamental form times distance to $Z$ is uniformly bounded.
Thus (after passing to a subsequence) the $\Sigma_n$ converge smoothly on compact subsets
of $\RR^3\setminus Z$ to a stable, rotationally invariant minimal surface $\Sigma$.  The only
rotationally invariant minimal surfaces in $\RR^3$ are catenoids and horizontal planes.
Since catenoids are unstable, $\Sigma$ must be a horizontal plane -- in fact, the plane $z=0$.
Since this limit is independent of choice of subsequence, in fact the sequence $\Sigma_n$ converges
to the plane $z=0$.  Hence $\alpha(p_n)\to 0$, proving Statement~\ref{main-theorem-puncture-item},
except for uniqueness.

If uniqueness failed, we would have two rotationally invariant disks tangent to each other
at a point $p$ on $Z$.  The intersection set would consist of $p$ together with a collection
of rotationally invariant circles. But near a common point of two distinct minimal surfaces in a $3$-manifold, the intersection set consists of two or more curves that meet at the point.
This proves uniqueness.

 \begin{remark}\label{any-stable-remark}
The proof of Statement~\ref{main-theorem-puncture-item} shows 
 that if $L\subset \BB$ 
is a stable, rotationally invariant, embedded minimal surface that contains $p\in \BB\cap Z$
in its closure, then $L\cup\{p\}$ is a smooth minimal surface.
\end{remark}

We now prove Statement~\ref{main-theorem-blowup-item}.
By the smooth convergence $D_n\to \LL$ in $\BB\setminus Z$ and by 
Statement~\ref{main-theorem-item4},  we already know that the curvatures of the $D_n$ are uniformly bounded on compact
subsets of $\BB\setminus K$.   
Thus we need only show if $p\in K$, then the curvatures of the $D_n$ blow up at $p$.
Suppose not. Then (by passing to a subsequence) we can assume that the
curvatures of the $\overline{D_i}$ are uniformly bounded in some neighborhood of $p$.
Since the tangent plane to $\overline{D_i}$ at $p$ is vertical, it follows that for
a sufficiently small ball $\BB(p,r)\subset \BB$, the slopes of the tangent planes to  the surfaces
$D_i\cap \BB(p,r)$ are all $\ge 1$.
   Hence if $L$ is leaf of $\LL$, then the slope of the tangent 
planes to $L\cap \BB(p,r)$ are all $\ge 1$.
But by Statement~\ref{main-theorem-puncture-item}, 
since $p\in K$, there is a rotationally invariant leaf $L(p)$ such that $L(p)\cup p$
is a smooth manifold.  In particular, the tangent plane at $p$ is horizontal, so $L(p)$ contains
points arbitrarily close to $p$ with slopes arbitrarily close to $0$.
The contradiction proves Statement~\ref{main-theorem-blowup-item}, 
and thereby completes the proof of the Theorem~\ref{laminationproperties}.
 \end{proof}



\begin{proposition}\label{proper-lift-proposition}
Each leaf of $\LL$ lifts to a properly embedded surface in the universal cover $U$ of
$\BB\setminus Z$.
\end{proposition}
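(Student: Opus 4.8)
The plan is to use the structure theorems already established. Recall that $\LL$ has two kinds of leaves: the rotationally invariant (limit) leaves collected in $\LL'$, and the $\theta$-graph leaves, which by Statement~\ref{main-theorem-item4} are exactly the spanning $\theta$-graphs $L=F(U_{\mathcal{O}})$ filling the components $\mathcal{O}$ of $\BB\setminus\overline{\cup\LL'}$. Lifting to the universal cover $U$ of $\BB\setminus Z$ means lifting along the covering $\pi\colon V\times\RR\to\BB\setminus Z$ of Section~\ref{ThetaGraphGraphs} (with $V=\{(r,z):r>0,\ (r,0,z)\in\BB\}$, so that $U=V\times\RR$). I would treat the two types of leaf separately.

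First I would handle a $\theta$-graph leaf $L$. By definition it lifts to the graph of a smooth function $\theta\colon V'\to\RR$ over a domain $V'\subset V$; when $L$ is spanning in $\mathcal{O}$, the domain is $V'=\{(r,z):r>0,\ (r,0,z)\in\mathcal{O}\}$. The graph of a continuous function over an open subset of $V$ is automatically a properly embedded submanifold of $V\times\RR$: a sequence in the graph that stays in a compact subset of $V\times\RR$ has its $V$-coordinates in a compact subset of $V'$ (else the $\RR$-coordinate would be forced out by continuity on a relatively closed piece — more precisely, if $(r_k,z_k)\to\partial V'$ inside $V$ then the graph points escape to the boundary of $V\times\RR$ over that limit point or leave every compact set), so any limit point lies in the graph. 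Hence each $\theta$-graph leaf lifts properly. This step is essentially bookkeeping.

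Next, the rotationally invariant leaves. Such a leaf $L\in\LL'$ lifts to a set invariant under the $\RR$-action of vertical translation (rotations in $\BB$ correspond to vertical translations in $V\times\RR$), namely $C\times\RR$ where $C\subset V$ is the generating curve of $L$ (a smooth properly embedded $1$-manifold in $V$, possibly several components, by Statement~\ref{main-theorem-item3} together with the fact that $L$ is an embedded surface in $\BB\setminus Z$). The product $C\times\RR$ of a properly embedded submanifold $C\subset V$ with $\RR$ is properly embedded in $V\times\RR$, for the same compactness reason as above. So these leaves lift properly as well.

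The point I expect to require the most care is making the identification ``$U=$ universal cover of $\BB\setminus Z$'' rigorous and checking that $V\times\RR$ is simply connected, so that $\pi$ really is the universal covering and the lifts just described are the lifts to $U$; here one uses that $\BB\cap Z$ is a segment (or empty), so $V$ is simply connected, hence $V\times\RR$ is simply connected, and $\pi$ restricted to $V\times\RR$ is the pullback of the standard $\RR\to S^1$ cover under the angular coordinate, which is a covering map with the right deck group $\ZZ$. Once this is in place, properness of each lifted leaf follows from the two product/graph arguments above. I would also remark that for a non-limit leaf filling $\mathcal{O}$, $\overline{L}\cap\mathcal{O}$ meets $Z$, but that is irrelevant here since we lift $L$ itself (which lies in $\BB\setminus Z$) to the cover of $\BB\setminus Z$.
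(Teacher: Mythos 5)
Your proof has genuine gaps in both cases, and in each case the gap is precisely the content of the proposition.

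For the $\theta$-graph leaves: it is \emph{not} true that the graph of a continuous function over an open subset $V'\subsetneq V$ is automatically properly embedded in $V\times\RR$ (it is only automatically proper in $V'\times\RR$). For example, the graph of $\sin(1/|x|)$ over $\RR^2\setminus\{0\}$ is embedded but accumulates on a vertical segment over the origin. In our situation a non-limit leaf $L$ is a spanning $\theta$-graph over $V_{\mathcal{O}}=\{(r,z):(r,0,z)\in\mathcal{O}\}$, and what must be shown is that $|\theta(r,z)|\to\infty$ as $(r,z)$ approaches $(\partial V_{\mathcal{O}})\cap V$, i.e.\ that $L$ spirals infinitely into the adjacent rotationally invariant leaves rather than limiting onto them at a finite angle. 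Your parenthetical (``the $\RR$-coordinate would be forced out by continuity'') is an assertion of this fact, not a proof of it. For the rotationally invariant leaves your argument is circular: you assume the generating curve $C$ is \emph{properly} embedded in $V$, but a leaf of a lamination is in general only embedded, not properly embedded (think of a curve spiraling onto a closed curve, or of leaves of an irrational foliation of a torus). Indeed the paper deduces properness of $\sigma$ in $V$ as Corollary~\ref{properness-corollary} \emph{from} Proposition~\ref{proper-lift-proposition}, so you may not assume it here.

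The missing ingredient is the area bound coming from the calibration property of minimal graphs: each lift $\tilde D_n$ satisfies $\area(\tilde D_n\cap C)\le\frac12\area(\partial C)$ for every compact $C$ with smooth boundary (Proposition~\ref{areaestimate0}), this bound passes to the smooth limit $\tilde D$ of (suitably translated) lifts, and a local area bound is incompatible with the lift accumulating at a point of $V\times\RR$ outside itself, since near such a point there would be infinitely many disjoint minimal sheets each contributing a definite amount of area by monotonicity. That is the paper's proof, and some argument of this kind (or another quantitative input) is unavoidable.
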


\begin{proof}
Let $V=\{(r,z): (r,0,z)\in \BB\}$.  Then we can regard $U=V\times\RR$ as the univeral
cover of $\BB\setminus Z$, the covering map being
\begin{align*}
  &\pi: V\times \RR \to \BB\setminus Z, \\
  &\pi(r,z,\theta) = (r\cos\theta, r\sin\theta, z).
\end{align*}
Let $L$ be a leaf of $\LL$ and let $p\in L$.  Let $p_n\in D_n$ converge to $p$.
Let $\tilde D_n$ be a lift of $D_n$ to the universal cover of $\BB\setminus Z$,
and let $\tilde p_n$ be the point in $\tilde D_n$ that projects to $p_n$.
By making suitable vertical translations, we can assume that the points $\tilde p_n$
converge to a point $\tilde p$ that projects to $p$.

Since $\tilde D_n$ is a minimal graph, it satisfies the following bound:
if $C$ is any compact region with smooth boundary in $U$, then
\begin{equation}\label{eq:area-bound}
   \area(\tilde D_n\cap C) \le \frac12 \area(\partial C).
\end{equation}
Since the $\tilde D_n$ are stable minimal surfaces, a subsequence
converges smoothly to a limit $\tilde D$.  By~\eqref{eq:area-bound}, the limit $\tilde D$ 
is properly embedded.  Note that $\tilde D$ is a lift of $L$.
\end{proof}

\begin{corollary}\label{properness-corollary}
If $\Sigma$ is a rotationally invariant leaf of $\LL$, then 
  $\Sigma$ is properly embedded in $\BB\setminus Z$.
\end{corollary}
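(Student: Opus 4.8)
The plan is to deduce properness of a rotationally invariant leaf $\Sigma$ of $\LL$ in $\BB\setminus Z$ directly from Proposition~\ref{proper-lift-proposition} by pushing forward the properness of the lift under the covering projection $\pi\colon V\times\RR\to\BB\setminus Z$. The key point is that a rotationally invariant surface behaves well under $\pi$: its lift $\tilde\Sigma$ is a union of horizontal slices $V\times\{\theta\}$ (indeed, since $\Sigma$ is rotationally invariant, its preimage under $\pi$ is invariant under the vertical translations of $V\times\RR$ that generate the deck group, so each lift is $\rho$-periodic and hence the image of a single ``fundamental'' lift under the full deck group).

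First I would fix a lift $\tilde\Sigma$ of $\Sigma$ given by Proposition~\ref{proper-lift-proposition}; it is properly embedded in $U=V\times\RR$. Next I would observe that because $\Sigma$ is rotationally invariant, $\pi^{-1}(\Sigma)=\bigcup_{k\in\ZZ}(\tilde\Sigma + kT)$ for the translation period $T$ corresponding to a full rotation, and moreover $\tilde\Sigma$ itself is invariant under all vertical translations (not just by $T$): rotational invariance of $\Sigma$ says $\pi^{-1}(\Sigma)$ is invariant under $V\times\{s\}$-translation for every $s$, and since each path component of $\pi^{-1}(\Sigma)$ is a lift, $\tilde\Sigma$ is carried to itself. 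Hence $\tilde\Sigma = C\times\RR$ where $C=\{(r,z):(r,0,z)\in\Sigma\}$ is a properly embedded curve in $V$ (properness of $C$ in $V$ follows from properness of $\tilde\Sigma=C\times\RR$ in $V\times\RR$). Then I would check that $\Sigma=\pi(\tilde\Sigma)$ is properly embedded in $\BB\setminus Z$: given a compact $K\subset\BB\setminus Z$, its preimage $\pi^{-1}(K)$ meets $V\times[0,T]$ in a compact set $K'$, and $\Sigma\cap K=\pi(\tilde\Sigma\cap\pi^{-1}(K))=\pi((C\times\RR)\cap\pi^{-1}(K))$ is the continuous image of the compact set $(C\times[0,T])\cap K'$ (using $T$-periodicity to restrict to one period), hence compact; embeddedness is immediate since $\Sigma$ is already known to be a leaf, i.e.\ an embedded minimal surface.

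The main obstacle I anticipate is the bookkeeping around the covering map rather than anything deep: one must be careful that ``properly embedded in $\BB\setminus Z$'' is the intended statement (properness \emph{relative to the punctured ball}, not relative to $\BB$, since $\Sigma$ may well accumulate at $Z$ — that is exactly the content of Statement~\ref{main-theorem-puncture-item}, which then \emph{adds} the single puncture point), and that the passage from properness of $\tilde\Sigma$ upstairs to properness of $\Sigma$ downstairs uses only that the deck group acts by isometries preserving $\tilde\Sigma$ setwise, so that a fundamental domain intersected with a compact set is compact. Once the identification $\tilde\Sigma=C\times\RR$ is in hand, the rest is routine point-set topology of proper maps, and the proof is a few lines.
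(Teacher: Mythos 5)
Your argument is correct and is essentially the paper's own proof: both invoke Proposition~\ref{proper-lift-proposition}, identify the lift of the rotationally invariant leaf as a product $\sigma\times\RR$ over the profile curve, conclude that $\sigma$ is proper in $V$, and descend. The paper simply states this in three lines, leaving implicit the routine covering-space bookkeeping you spell out (and note that your step ``each vertical translation carries $\tilde\Sigma$ to itself'' is cleanest via the observation that $\pi^{-1}(\Sigma)=\sigma\times\RR$ is already connected, so it is the unique lift).
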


\begin{proof}
Let 
\[
  \sigma = \{((x^2+y^2)^{1/2},z): (x,y,z)\in \Sigma\}.
\]
Then $\sigma\times \RR$ is the lift of $\Sigma$ to the universal cover.

Since $\sigma\times\RR$ is a properly embedded surface in $V\times\RR$
 (by Proposition~\ref{proper-lift-proposition}),
$\sigma$ is a properly embedded curve in $V$.  The result follows immediately.
\end{proof}

Next we prove that each rotationally invariant leaf in Theorem~\ref{laminationproperties} is either
a punctured disk or an annulus, and that the corresponding disk or annulus is properly embedded
in $\BB$.   The reader may wish to skip the proof, since the 
theorem is obviously true in the cases we are most interested in (namely,
when the Riemannian metric on $\BB$ is the Euclidean metric or the Poincar\'e metric).

\begin{proposition}\label{additional-properties-proposition}
Let $\Sigma$ be a rotationally invariant leaf in the lamination
   $\LL$.
Then either $\Sigma$ is a punctured disk such that $\overline{\Sigma}\cap \BB$ properly embedded in $\BB$,
or $\Sigma$ is an annulus that is properly embedded
in $\BB$.

Now suppose that  $\overline{\BB}$ is compact with smooth boundary,
that the metric extends smoothly to $\overline{\BB}$, and that $\overline{\BB}$
is strictly mean convex with respect to the metric.
Then $\Sigma$ is smooth at the boundary: 
$\overline{\Sigma}$ is either a smoothly embedded closed disk or
a smoothly embedded closed annulus.
\end{proposition}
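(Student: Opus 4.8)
The starting point is that $\Sigma$ is rotationally invariant, so it is determined by the planar curve $\sigma=\{((x^2+y^2)^{1/2},z):(x,y,z)\in\Sigma\}$ in $V=\{(r,z):(r,0,z)\in\BB\}$, and by Corollary~\ref{properness-corollary} this $\sigma$ is a properly embedded curve in $V$. Moreover $\Sigma$ is stable (being a leaf of a limit lamination of stable surfaces), so by Remark~\ref{any-stable-remark} it is smooth across any point of $\BB\cap Z$ in its closure. Thus $\overline\Sigma\cap\BB$ is a smooth rotationally invariant minimal surface, obtained from $\sigma$ by rotation, possibly together with one or two points on $Z$. The first task is to classify such surfaces topologically. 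I would argue that $\sigma$, being a properly embedded connected $1$-manifold in $V$ along which the rotational Jacobi field is the restriction of an ambient Killing field and which corresponds to a minimal surface, is either an arc with both endpoints on the axis $r=0$ (giving a punctured disk, or a genuine disk after adding the axis points—this is where Remark~\ref{any-stable-remark} is used, at either one or both ends), an arc with one endpoint on the axis and one exiting through $\partial V$, an arc with both ends exiting through $\partial V$, or a closed curve (giving an annulus, which cannot be closed off since $\sigma$ cannot be a closed curve bounding a disk in $V$ containing axis points without $\Sigma$ failing to be embedded/minimal). Combining: $\overline\Sigma\cap\BB$ is either a punctured disk whose one-point completion is properly embedded in $\BB$, or a properly embedded annulus. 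Properness of the completion in $\BB$ follows from properness of $\sigma$ in $V$ together with the smoothness statement of Remark~\ref{any-stable-remark}.

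For the second (boundary-regularity) assertion, now assume $\overline\BB$ is compact, the metric extends smoothly, and $\overline\BB$ is strictly mean convex. The key point is that strict mean convexity of $\partial\BB$ forces $\Sigma$ to have finite area: by the monotonicity formula together with a barrier argument (strict mean convexity prevents a minimal surface from accumulating or spiraling at $\partial\BB$), $\overline\Sigma$ is a compact set meeting $\partial\BB$ transversally in a finite union of rotationally invariant circles (or in isolated points on $Z\cap\partial\BB$, if $Z$ meets $\partial\BB$). Concretely, in the $(r,z)$-picture, $\sigma$ is a properly embedded curve in $V$ of finite length whose endpoints, if any, lie on $\partial V$, and strict mean convexity gives a uniform transversality estimate at $\partial V$; hence $\overline\sigma$ is a compact embedded arc or circle meeting $\partial V$ transversally. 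Then I would invoke the boundary regularity theorem quoted after Theorem~\ref{theta-boundary-theorem} (the result of \cite{white-newboundary}) at points of $\overline\Sigma\cap Z\cap\partial\BB$, and standard elliptic boundary regularity for minimal surfaces meeting a smooth mean-convex barrier transversally at the remaining boundary points, to conclude $\overline\Sigma$ is a smooth compact manifold-with-boundary. Combined with the topological classification, $\overline\Sigma$ is either a smoothly embedded closed disk or a smoothly embedded closed annulus.

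The main obstacle I anticipate is the passage from ``$\sigma$ properly embedded in $V$'' to ``$\overline\sigma$ compact and meeting $\partial V$ transversally''—that is, ruling out the possibility that $\sigma$ runs off to the boundary of $V$ in a pathological way (infinite length, oscillating, accumulating on a whole arc of $\partial V$). This is exactly where strict mean convexity of $\partial\BB$ must be used decisively: one needs it to produce barriers (e.g. pieces of the foliation of a neighborhood of $\partial\BB$ by mean-convex surfaces, or simply the distance function to $\partial\BB$ as a superharmonic-type barrier) that trap $\Sigma$ and give both an area bound and a uniform angle bound at the boundary. Everything else—the topological enumeration of curves in $V$, the use of Remark~\ref{any-stable-remark} at the axis, and the citation of the boundary regularity theorems—is comparatively routine once that compactness/transversality at $\partial\BB$ is in hand. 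I would also note, as the excerpt itself does, that in the two cases of primary interest (Euclidean and Poincar\'e metrics) the rotationally invariant minimal surfaces are explicit—planes/disks and catenoids in the Euclidean case, their hyperbolic analogues in the Poincar\'e case—so the classification and boundary behavior can be read off directly without the general argument.
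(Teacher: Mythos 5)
Your overall framework (reduce to the profile curve $\sigma$ in $V$, use Corollary~\ref{properness-corollary} for properness and Remark~\ref{any-stable-remark} at the axis, then classify the ends of $\sigma$) is the same as the paper's, but your case analysis has a genuine gap: you never correctly exclude the closed surfaces. If $\sigma$ is a closed curve in $V$, then $\Sigma$ is a torus; if $\sigma$ is an arc with \emph{both} ends limiting to points of $\BB\cap Z$, then by Statement~\ref{main-theorem-puncture-item} of Theorem~\ref{laminationproperties} the set $\overline\Sigma\cap\BB$ is a smooth \emph{sphere}, not a disk as your first case asserts --- a twice-punctured sphere is an annulus that is not properly embedded in $\BB$, so this case genuinely violates the conclusion and must be ruled out, not absorbed into the disk case. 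Your stated reason for excluding the closed curve (``$\sigma$ cannot be a closed curve bounding a disk in $V$ containing axis points without $\Sigma$ failing to be embedded/minimal'') is not an argument: $V$ is simply connected and disjoint from the axis, so a closed curve in $V$ bounds a disk in $V$ with $r>0$ throughout, and neither embeddedness nor minimality of $\Sigma$ is contradicted. The paper's mechanism is global, not local: $\Sigma$ is a leaf of the oriented minimal foliation $\Rr$ obtained as the limit of the foliations by rotated copies of the $D_n$, so $\Div\nu=0$ for the unit normal $\nu$, and the generalized divergence theorem applied over the region bounded by a closed leaf forces $\area(\Sigma)=0$ (Corollary~\ref{no-closed-surfaces-corollary}). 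Some such calibration input is needed; it cannot be read off from the classification of profile curves alone.

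For the boundary statement you correctly isolate strict mean convexity as the tool that forces each end of $\sigma$ to converge to a single point of $\partial V$ (the paper is equally terse on this step), but two further exclusions are missing. First, an end of $\sigma$ could a priori limit to a pole $(0,0,\pm 1)$; the paper rules this out by observing that Remark~\ref{any-stable-remark} would then make $\Sigma\cup\{p\}$ a smooth minimal surface touching $\partial\BB$ at the pole, contradicting strict mean convexity. Second, both ends of $\sigma$ could converge to the \emph{same} point of $(\partial\mathcal D)\setminus Z$, making $\overline\Sigma$ a closed annulus with its two boundary circles identified, i.e.\ a closed surface in $\overline\BB$; the paper excludes this with the same divergence-theorem device (Corollary~\ref{annoying-corollary}). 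Without these two steps and the closed-surface exclusion above, the dichotomy ``closed disk or closed annulus'' does not follow.
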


\begin{proof}
Let $\mathcal{D}$ be the planar domain
\[
  \mathcal{D} = \{ (r,0,z)\in \BB: r>0\}
\]
and let $\sigma$ be the curve in $\mathcal{D}$ given by
\[
  \sigma = \Sigma\cap \mathcal{D}
\]
Thus $\Sigma$ is the surface of revolution obtained by rotating $\sigma$ around $Z$.
 
 In Corollary~\ref{no-closed-surfaces-corollary}, we show that 
\begin{equation}\label{eq:not-closed}
\text{$\overline{\Sigma}$ cannot be a smooth closed surface in $\BB$.}
\end{equation}
Thus $\sigma$ is not a closed curve, so it has two ends.

If one end of $\sigma$ contained a point $p$ of $Z\cap \BB$ and if the other end
contained a point $q$ of $Z\cap \BB$ in its closure,
   then $\Sigma\cup \{p,q\}$ would be a smooth embedded surface in $\BB$ 
(by Statement~\ref{main-theorem-puncture-item} of Theorem~\ref{laminationproperties}),
contradicting~\eqref{eq:not-closed} above.

Thus either $\sigma$ contains no points of $Z\cap \BB$ in its closure,
or exactly one end of $\sigma$ contains a point $p$ of $Z\cap \BB$ in its closure.
In the first case, $\Sigma$ is a properly embedded annulus in $\BB$.
In the second case, $\overline{\Sigma}\cap \BB = \Sigma\cup \{p\}$  
is a properly embedded disk in $\BB$.
(The properness follows from Corollary~\ref{properness-corollary} above.)

Now suppose that $\partial \BB$ is smooth and that the metric extends smoothly
to $\overline{\BB}$.  If $\sigma$ contained an endpoint $p$ of $Z\cap \BB$
in its closure, then
             $\Sigma\cup\{p\}$ would be a smooth minimal surface
(by Remark~\ref{any-stable-remark}),
contradicting the mean convexity of $\partial \BB$ at $p$.
Thus $\sigma$ cannot contain an endpoint of $Z\cap \BB$ in its closure.
It follows that at least one end of $\sigma$ contains a point $q$ of $(\partial \Dd)\setminus Z$
in its closure.  By the strict mean convexity, that end of $\sigma$ must converge to $q$.
Thus the union of $\Sigma$
and the circle corresponding to $q$ is a smooth manifold with
boundary.  
The two ends of $\sigma$ cannot converge to the same
point in $(\partial \Dd)\setminus Z$, since then $\overline{\Sigma}$ would be
a closed surface in $\overline{\BB}$, which is impossible by Corollary~\ref{annoying-corollary}.

We have shown that either $\sigma$ has one endpoint in $(\partial \Dd)\setminus Z$
and the other endpoint in $Z\cap \BB$, in which case $\overline{\Sigma}$ is a disk, or
$\overline{\Sigma}$ has both endpoints in $(\partial \Dd)\setminus Z$, in which case
$\overline{\Sigma}$ is an annulus.
\end{proof}

\section{Existence of minimal $\theta$-graphs with prescribed boundary} \label{ExistenceUniqueness}

In this section, we prove existence and uniqueness of spanning minimal $\theta$-graphs
for a large family of prescribed boundary curves.

\begin{theorem}\label{UniqueEmbeddedDisk}
Let $\BB$ be the open unit ball in $\RR^3$, and 
suppose that $\overline{\BB}$
is mean convex with respect to a smooth Riemannian metric $g$
that is rotationally invariant about $Z$.

Let $\gamma$ be a smooth curve in $\partial \BB$ 
    joining $p^+=(0,0,1)$ to $p^-=(0,0,-1)$
such that $\gamma$ intersects each horizontal circle in $\partial \BB$ exactly once,
and such that the curve $\gamma\cup \rho_Z\gamma$ is smooth.
Let $\Gamma$ be the union of $\gamma$ with $Z\cap \BB$.
 Then among all oriented surfaces (of arbitrary genus) with boundary $\Gamma$, 
 there is a unique surface $D$ of least area.
 The surface $D$ is a $\theta$-graph, and 
   $\overline{D\cup \rho_ZD}$ is a smoothly embedded disk with boundary 
   $\gamma\cup \rho_Z\gamma$.
 
 Furthermore, if $M\subset \BB$ is any oriented, embedded minimal surface
 with finite area, finite genus, and with boundary $\Gamma$, then $M=D$.
\end{theorem}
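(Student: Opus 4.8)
The plan is to construct $D$ as an area minimizer among integral currents, establish the $\theta$-graph property via the rotational symmetry of the problem, deduce smoothness and embeddedness, and finally prove the uniqueness clause by a calibration-type argument using the rotational Jacobi field.

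\textbf{Step 1: Existence and interior/boundary regularity of the minimizer.} Since $\overline{\BB}$ is mean convex for $g$, standard geometric measure theory (Federer--Fleming compactness, together with the interior and boundary regularity theory for area-minimizing currents in dimension $2$ in a $3$-manifold) gives an area-minimizing integral current $D$ with $\partial D = \Gamma$; here I use that $\Gamma = \gamma \cup (Z\cap\BB)$ is a smooth embedded arc (its smoothness at $p^\pm$ being exactly the hypothesis that $\gamma\cup\rho_Z\gamma$ is smooth). Interior regularity makes $D$ a smooth embedded minimal surface away from $\partial\BB\cup Z$; Allard-type boundary regularity plus Theorem~\ref{theta-boundary-theorem}'s underlying boundary regularity theorem (White, \cite{white-newboundary}) make $D\cup(Z\cap\BB)$ a smooth manifold-with-boundary along $Z$, and mean convexity gives boundary regularity along $\gamma$. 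So $\overline D$ is a smooth compact manifold-with-boundary with $\partial\overline D = \Gamma$.

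\textbf{Step 2: $D$ is a spanning $\theta$-graph.} This is the heart of the construction. For each $\alpha$, the rotation $\rho_\alpha$ about $Z$ fixes $\Gamma$ setwise is false in general, but the key point is different: I would instead use the foliation/sweepout idea. Consider the family $\rho_\alpha D$ for $\alpha\in[0,2\pi]$ (rotations of $D$ about $Z$); each is area-minimizing with the same boundary $\Gamma$ only if $\rho_\alpha\Gamma = \Gamma$, which holds only for the $Z$-part, not for $\gamma$. The correct approach: work directly with $D\setminus Z$ and show it meets each rotationally invariant circle at most once, by a maximum-principle / first-touching argument — if some circle met $D$ twice, sliding a rotated copy of $D$ would produce an interior or boundary tangency violating the strong maximum principle (using mean convexity at $\partial\BB$ to rule out boundary contact there). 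Once $D\setminus Z$ meets each invariant circle at most once, embeddedness and $\partial D \supset Z\cap\BB$ force it to meet each invariant circle in $\BB\setminus Z$ exactly once, so by Lemma~\ref{thetagraphs} (the winding-number condition $1(b)$ follows since $D$ is a disk — which itself follows once we know $D\setminus Z$ is a graph over an annular planar domain) $D$ is a spanning $\theta$-graph, and then $\overline{D\cup\rho_ZD}$ is a smoothly embedded disk with boundary $\gamma\cup\rho_Z\gamma$ by Theorem~\ref{theta-boundary-theorem}. I expect \emph{this step to be the main obstacle}: making the first-touching argument rigorous requires carefully handling contact at $Z$ (where $D$ has boundary) and at $\partial\BB$, and ruling out the degenerate possibility that $D$ contains an entire rotationally invariant circle.

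\textbf{Step 3: Uniqueness among embedded minimal surfaces with boundary $\Gamma$.} Let $M\subset\BB$ be oriented, embedded, minimal, with finite area and finite genus and $\partial M = \Gamma$. The rotational Killing field $\partial/\partial\theta$ restricts to a Jacobi field on $D$ that is nowhere zero on $D\setminus Z$ (by Step~2, $D$ is transverse to the invariant circles), so $D$ carries a positive Jacobi field and is strictly stable, hence area-minimizing in the stronger sense that it is the unique minimizer of area in its homology class relative to $\Gamma$, and moreover one can build a calibration: the minimal foliation of $\BB\setminus Z$ by the rotates of $D\setminus Z$ (spanning, by Step~2) together with the limiting leaves gives a closed calibrating $2$-form $\omega$ on $\BB$ (the form dual to the unit normal of the foliation, closed because all leaves are minimal — this is the standard calibration associated to a minimal foliation). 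Then $\area(M) \ge \int_M\omega = \int_D\omega = \area(D)$ by Stokes (valid since $\partial M = \partial D = \Gamma$ and $M$ has finite area), with equality iff $M$ is everywhere tangent to the foliation, i.e.\ $M$ is a union of leaves; since $\partial M = \Gamma$ contains $Z\cap\BB$ and the only leaf whose closure contains $Z\cap\BB$ is $D$ itself, equality forces $M = D$. Conversely $M$ minimal with the same boundary and finite area must achieve equality (it cannot have strictly larger area than the minimizer and still be a competitor — here one uses that $M$ is homologous to $D$ rel $\Gamma$, which holds because $\BB$ is simply connected so $H_2(\BB,\Gamma)$ is generated by $D$, or more directly: $M - D$ is a cycle in the ball, hence a boundary, so $\int_M\omega = \int_D\omega$), giving $M = D$. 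The finite-genus hypothesis is used to guarantee $M$ defines an integral current with $\partial = \Gamma$ so that the homological/Stokes argument applies.
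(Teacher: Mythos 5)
Your Steps 1 and 2 are close in spirit to the paper's argument, but two points need attention. First, $\Gamma=\gamma\cup(Z\cap\BB)$ is \emph{not} smooth at $p^{\pm}$: those are corner points, and the paper handles them by a tangent-cone analysis (the only area-minimizing cone in a halfspace whose boundary is the vertical axis plus a horizontal ray, each with multiplicity one, is a quarter-plane) followed by Allard's boundary regularity theorem applied to the doubled surface $\overline{D\cup\rho_Z D}$; your Step 1 glosses over this. Second, the ingredient missing from your Step 2 is a clean topological argument for the winding-number condition $1(b)$ of Lemma~\ref{thetagraphs}: push any closed curve $\sigma\subset D$ off $D$ along the unit normal to get $\sigma'$ with intersection number $0$ with $D$; since the winding number of $\sigma'$ about $Z$ equals its linking number with $\Gamma$, which equals its intersection number with $D$, the winding number vanishes. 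This yields a single-valued angle function $\theta_D$ on $\overline D$ \emph{before} any sliding, after which the sliding argument you worry about becomes routine: set $\omega(p)=\max\{\theta_D(q)-\theta_D(p): q\in\overline D \text{ on the circle through } p\}$, note that $\omega$ is upper semicontinuous and vanishes on $\Gamma$, and apply the strong maximum principle at an interior maximum. Your proposed order (first prove ``at most once per circle'' by sliding, then deduce the lift) is harder to carry out, and your justification of $1(b)$ (``$D$ is a disk, which follows once $D\setminus Z$ is a graph over an annular planar domain'') is muddled: the relevant parameter domain $\{(r,z):r>0,\ (r,0,z)\in\BB\}$ is simply connected, not annular, and that is what permits the lift.

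The serious gap is in Step 3. Your calibration argument gives $\area(M)\ge\int_M\omega=\int_D\omega=\area(D)$, with equality iff $M$ is a union of leaves. This proves that $D$ is the \emph{unique area-minimizer}, but it cannot prove the ``furthermore'' clause: a minimal surface $M$ with boundary $\Gamma$ is only a critical point of area and may well satisfy $\area(M)>\area(D)$, in which case your inequality yields no contradiction. The parenthetical ``it cannot have strictly larger area than the minimizer and still be a competitor'' is false (consider an unstable catenoid versus the pair of disks with the same boundary). The paper proves the furthermore clause by a different, maximum-principle argument: after using the finite-area and finite-genus hypotheses to rule out branch points of $\overline{M\cup\rho_Z M}$ and thereby obtain a continuous angle function $\theta_M$ on $\overline M$, one considers $\phi(p)=\theta_D(q)-\theta_M(p)$, where $q$ is the point of $D$ on the rotationally invariant circle through $p$; $\phi$ vanishes on $\overline M\cap\partial\BB$, and a positive interior maximum of $|\phi|$ would force $M$ and a rotated copy of $D$ to be tangent with no transverse intersections nearby, contradicting the strong interior (or boundary, at $Z$) maximum principle. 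You would need to replace your Step 3 with an argument of this kind to obtain the full statement.
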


Concerning the hypothesis that $\gamma\cup \rho_Z\gamma$ is smooth,
note that smoothness of $\gamma$ implies smoothness of $\gamma\cup\rho_Z\gamma$ except
possibly at the endpoints of $\gamma$.  
For $\gamma\cup \rho_Z\gamma$ to be smooth at an endpoint of $\gamma$,
the necessary and sufficient condition is the vanishing of curvature and all even order 
derivatives
of curvature at that endpoint.

\begin{corollary}\label{existence-corollary}
Suppose that $D$ is a spanning $\theta$-graph in $\BB$ that is minimal
with respect to a smooth, rotationally invariant Riemannian metric on $\BB$.

If $\overline{\BB'}\subset \BB$ is rotationally invariant about $Z$,
mean convex, and smoothly diffeomorphic to a closed ball, 
then $D\cap\overline{\BB'}$ 
is the unique least-area integral current among all integral currents in $\overline{\BB'}$ 
having boundary $\partial (D\cap \BB')$.

If $\BB$ can be exhausted by such subdomains $\overline{\BB_n'}$,
then $D$ is an area-minimizing integral current.
\end{corollary}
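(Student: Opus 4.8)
The plan is to build a calibration on $\overline{\BB'}$ out of the foliation of $\BB\setminus Z$ by the rotated copies of $D$; this is in essence the mechanism behind Theorem~\ref{UniqueEmbeddedDisk}, now carried out in the subdomain. Since $D$ is a spanning $\theta$-graph, its images $R_\alpha D$ under rotation by angle $\alpha$ about $Z$ foliate $\BB\setminus Z$, and each $R_\alpha D$ is minimal because $R_\alpha$ is an isometry of the metric. Writing a point of $\BB\setminus Z$ in cylindrical coordinates $(r,\phi,z)$ and $D=\{(r\cos\theta(r,z),r\sin\theta(r,z),z)\}$, the leaves of the foliation are the level sets of $u:=\phi-\theta(r,z)$ (defined modulo $2\pi$), and $du=d\phi-d\theta$ is a single-valued, nowhere-vanishing $1$-form on $\BB\setminus Z$. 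Let $\nu:=\nabla_g u/|\nabla_g u|_g$ be the associated unit normal field and $\omega:=\iota_\nu(\mathrm{vol}_g)$. Then $\omega$ has comass $1$ and restricts to the area form of each leaf, and because the leaves are minimal, $\ddiv_g\nu=0$, so $d\omega=(\ddiv_g\nu)\,\mathrm{vol}_g=0$ on $\BB\setminus Z$. One then checks that $\omega$ extends to a bounded, distributionally closed $2$-form on all of $\BB$: it is bounded because $|\nu|_g\equiv 1$ (in the model case of vertical halfplanes, $\omega=dz\wedge dr$, bounded but discontinuous along $Z$), and since $Z$ has codimension $2$, integrating $df\wedge\omega$ over $\BB\setminus N_\eps(Z)$ and letting $\eps\to 0$ --- the boundary term over $\partial N_\eps(Z)$ being $O(\eps)$ --- shows $d\omega=0$ in the sense of currents.

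Orient $D$ so that $\omega$ restricts to its area form; then $\mathbf{M}([D\cap\overline{\BB'}])=\int_{[D\cap\overline{\BB'}]}\omega$. Let $T$ be any integral current in $\overline{\BB'}$ with $\partial T=\partial(D\cap\BB')$. Since $\overline{\BB'}$ is diffeomorphic to a closed ball, the integral cycle $T-[D\cap\overline{\BB'}]$ bounds an integral $3$-current, so $\int_T\omega=\int_{[D\cap\overline{\BB'}]}\omega$; applying Stokes here with the merely bounded closed form $\omega$ is justified by mollifying $\omega$ (the mollifications stay closed) and using that $Z$ is null for the mass measure of every integral $2$-current. Hence $\mathbf{M}(T)\ge\int_T\omega=\mathbf{M}([D\cap\overline{\BB'}])$, so $D\cap\overline{\BB'}$ is area-minimizing. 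In the equality case, the approximate tangent planes of $T$ agree $\|T\|$-almost everywhere with those of the foliation; $T$ is then itself a minimizer, hence smooth in its interior (interior regularity for area-minimizing $2$-currents in a $3$-manifold) and so a union of pieces of leaves, and a standard argument using the disjointness of the leaves $R_\alpha D$ and the boundary condition $\partial T=\partial(D\cap\BB')$ forces $T=[D\cap\overline{\BB'}]$. This gives the asserted uniqueness. Finally, if the $\overline{\BB_n'}$ exhaust $\BB$, then $D$ is area-minimizing in each $\overline{\BB_n'}$ by the above, and a current that minimizes area in every member of an exhaustion minimizes over every compact region, i.e.\ is an area-minimizing integral current.

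The step I expect to be the main obstacle is the analysis along the axis $Z$, where the foliation, $\nu$, and $\omega$ are all undefined and $\omega$ is in general only bounded rather than continuous. The argument turns on the two facts that $\omega$ stays closed \emph{as a current} across $Z$ (which uses that $Z$ has codimension $2$) and that $Z$ is negligible for every mass measure entering the Stokes and rigidity steps. An alternative, which yields uniqueness only among embedded surfaces rather than among integral currents, is to transport $\overline{\BB'}$ to $\overline{\BB}$ by a rotationally invariant diffeomorphism --- under which $g$ becomes a smooth, rotationally invariant, mean-convex metric and $D\cap\overline{\BB'}$ a spanning minimal $\theta$-graph --- verify that $\overline D\cap\partial\BB'$ together with its $\rho_Z$-image is an admissible boundary curve, and invoke Theorem~\ref{UniqueEmbeddedDisk}, whose ``furthermore'' clause then identifies $D\cap\overline{\BB'}$ with the unique least-area surface.
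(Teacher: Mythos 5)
Your calibration argument is correct, but it is not the route the paper takes: the paper's entire proof of this corollary is to apply Theorem~\ref{UniqueEmbeddedDisk} to the subdomain $\overline{\BB'}$ (which is admissible because it is rotationally invariant, mean convex, and diffeomorphic to a closed ball), letting the curve $\overline{D}\cap\partial\BB'$ play the role of $\gamma$; the first clause of that theorem then gives existence and uniqueness of the least-area integral current with boundary $\partial(D\cap\BB')$, and the ``furthermore'' clause identifies it with $D\cap\overline{\BB'}$. This is exactly your second, ``alternative'' route --- and note that your reservation about it is unfounded: Theorem~\ref{UniqueEmbeddedDisk} asserts uniqueness among \emph{all oriented surfaces (integral currents)} with the given boundary, not merely among embedded minimal surfaces, so it does deliver the full current-theoretic statement. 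Your primary argument --- calibrating with $\omega=\iota_\nu(\mathrm{vol}_g)$ built from the foliation by rotates $R_\alpha D$, using $\Div\nu=0$ for a minimal foliation, and disposing of the axis via its codimension-two size and the fact that $\mathcal H^2(Z)=0$ makes $Z$ null for the mass measure of any integral $2$-current --- is sound and is essentially the mechanism the paper itself deploys later, in Theorem~\ref{minimizing-theorem} and the Generalized Divergence Theorem of the Appendix, to show that the rotationally invariant leaves of a limit lamination are area-minimizing. What your route buys is independence from the Hardt--Simon existence and boundary-regularity machinery underlying Theorem~\ref{UniqueEmbeddedDisk}, and it reveals that the mean-convexity of $\overline{\BB'}$ is not actually needed for the minimization inequality; what it costs is that the uniqueness must be extracted by hand from the equality case (tangent planes aligned with the foliation $\|T\|$-a.e., interior regularity, the constancy theorem on each leaf, and the observation that the rotates $R_\alpha\gamma'$ of the boundary curve are pairwise distinct so no cancellation can occur), all of which you sketch correctly but which the paper gets for free from the sliding/maximum-principle uniqueness already proved in Theorem~\ref{UniqueEmbeddedDisk}. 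One small point worth making explicit in your write-up: the finiteness of $\area(D\cap\overline{\BB'})$, needed to write $\mathbf{M}([D\cap\overline{\BB'}])=\int\omega$, follows from Theorem~\ref{theta-boundary-theorem}, which makes $D\cup(Z\cap\BB)$ a smooth manifold-with-boundary up to the axis.
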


\begin{proof}[Proof of corollary]
Apply the theorem to $D\cap U$.
\end{proof}

\begin{proof}[Proof of Theorem~\ref{UniqueEmbeddedDisk}]
Let $D$ be an  oriented area-minimizing surface (i.e., integral current) in $\BB$ bounded by
  $\Gamma$.   (To be precise, we let $D$ be the set of points in $\BB\setminus Z$ in the 
   support of that integral current.)
Note we are not restricting the genus of $D$.
By the Hardt-Simon boundary regularity theorem~\cite{hardt-simon}, 
  $\overline{D}$ is a smooth, embedded
manifold-with-boundary except at the corners $p^+$ and $p^-$ 
of $\Gamma$.
Let $C$ be a tangent cone to $D$ at $p^-$.  Then $C$ lies in the upper halfspace $\{x_3\ge 0\}$,
and the boundary of $C$ consists of the positive $x_3$-axis together with a horizontal ray, both
with multiplicity $1$.   The only such cone is the corresponding quarter-plane with multiplicity one.
Now $\overline{D\cup \rho_ZD}$  is a minimal surface 
with boundary $\gamma\cup\rho_Z\gamma$,  and it is smoothly immersed everywhere except possibly at $p^+$ and at $p^-$.
We have just shown that the tangent cone to 
               $\overline{D\cup \rho_ZD}$
 at $p^-$ is a halfplane with multiplicity one.
By Allard's Boundary Regularity Theorem~\cite{allard-boundary}, 
   $\overline{D\cup \rho_ZD}$
 is a smoothly embedded manifold-with-boundary near $p^-$.
Likewise, it is a smoothly embedded manifold-with-boundary near $p^+$.

Let $\sigma$ be a closed curve in $D$.  
By pushing $\sigma$ slightly in the direction of the unit normal to $D$, we get a closed curve
$\sigma'$ that is homotopic to $\sigma$ in $W\setminus Z$.  Note that $\sigma'$ is disjoint from $D$.
Thus its algebraic intersection number with $D$ is $0$.
By elementary topology, the winding number about $Z$ of a closed curve in $W\setminus Z$
is equal to its linking number with $\Gamma$, which is equal to its intersection number with $D$.
Thus the winding number of $\sigma'$ about $Z$ is $0$.
Since $\sigma$ and $\sigma'$ are homotopic in $W\setminus Z$, the same is true of $\sigma$.

We have shown: every closed curve in $D$ has winding number $0$ about $Z$.
Thus $D$ lifts to the universal cover of $W\setminus Z$.
Equivalently, there is an angle function $\theta_D: D\setminus Z\to \RR$ such that
\[
   p = ( r(p)\cos\theta_D(p), r(p)\sin\theta_D(p), z)
\]
for all $p=(x,y,z)\in D\setminus Z$, where
$
   r(p) = \sqrt{x^2+y^2}
$.   
The smoothness of $\overline{D\cup \rho_ZD}$ implies that
 $\theta_D(\cdot)$
extends continuously to $\overline{D}$.

Now define
\[
  \omega: \overline{D} \to \RR
\]
by letting $\omega(p)$ be the maximum of $\theta_D(q)-\theta_D(p)$
among all $q\in \overline{D}$ such that $q$ and $p$ lie on the same rotationally invariant circle.
Note that $\omega$ is upper semicontinuous and that $\omega=0$ on $\partial D=\Gamma$
(by the smoothness of $D$ at the boundary).
Thus if $\omega$ did not vanish everywhere, it would attain its maximum at some interior point 
  $p\in D$.
But at that point, the strong maximum principle would be violated.
(Note that the surface $D$ and the surface obtained by rotating $D$ through angle $-\omega(p)$
would touch each other at $p$.)  Thus $\omega(\cdot)\equiv 0$, which implies that $D$
is a $\theta$-graph.  Every rotationally invariant circle in $W$ links $\Gamma$
and therefore must intersect $D$.
Thus $D$ is a spanning $\theta$-graph.

To prove the uniqueness assertion, suppose that $M$ is
a finite-genus, finite-area, orientable, embedded minimal surface in $W$ with boundary $\Gamma$. 
By classical boundary regularity theory, $\overline{M\cup\rho_ZM}$ is a 
  minimal immersed
surface, possibly with branch points.  Since the boundary of 
$\overline{M\cup\rho_ZM}$
 lies on $\partial W$, it
cannot have any boundary branch points.   Also, it cannot have interior branch points in 
$W\setminus Z$ since $M$ is embedded.   Finally, it cannot have
a branch point on $Z$, since then $M$ would have a boundary branch point on $Z$, 
which implies that $M$ is not embedded near that point, a contradiction.
We have shown that $\overline{M\cup\rho_ZM}$ is a smoothly immersed surface-with-boundary.

Just as for $D$, it follows that there is a continuous angle function
\[
 \theta_M: \overline{M}\to \RR
\]
such that
\begin{equation}\label{eq:defining-property}
  p = (r(p) \cos\theta_M(p), r(p) \sin\theta_M(p), z)
\end{equation}
for $p=(x,y,z)\in \overline{M}$, where $r(p)=\sqrt{x^2+y^2}$.

Note that on $\Gamma\cap\partial W$, $\theta_M$ and $\theta_D$ 
differ by a constant multiple of $2\pi$.
Note also that adding a multiple of $2\pi$ to $\theta_M(\cdot)$
 does not affect~\eqref{eq:defining-property}.
Thus we can assume that $\theta_D\equiv\theta_M$ on $\Gamma\cap\partial W$.

Now define a continuous function 
\begin{align*}
&\phi: \overline{M} \to \RR, \\
&\phi(p) = \theta_D(q)-\theta_M(p),
\end{align*}
where $q$ is the unique point of intersection of $D$ with the rotationally
invariant circle containing $p$.
(Here we allow circles of radius $0$, so if that $p\in Z$, then $q(p)=p$.)

Now $\phi\equiv 0$ on $\overline{M}\cap \partial W$,
so if it were not everywhere $0$, then $|\phi(\cdot)|$ would
attain a strictly positive maximum at some point $p\in \overline{M}\cap W$.
But that would violate the strong maximum principle (if $p\in M$) or the strong
boundary maximum principle (if $p\in Z$).  

(If this is not clear, consider $\overline{M}$ and the surface obtained
by rotating $\overline{D}$ by angle $-\phi(p)$.  The two surfaces are tangent at $p$,
and there is a neighborhood of $p$ in which the two surfaces have no transverse intersections.)
\end{proof}

\section{Smooth convergence  at the boundary  }
\label{SmoothConvergenceBoundary}

In this section, we will assume that
\begin{enumerate}[\upshape (i)]
\item\label{boundary-form-item}
$D_n$ is a sequence of  spanning
 minimal $\theta$-graphs in $\BB\setminus Z$  with boundaries of the form 
\begin{equation*}
\partial D_n =\gamma _n \cup \overline I, 
\end{equation*}
where  $I={\BB}\cap Z$ and $\gamma_n$ is an
 embedded
curve in  $\partial \BB$ connecting 
the endpoints  
$p^+=(0,0,1)$ and $p^-=(0,0,-1)$ of $I$.
\item\label{metric-item} the Riemannian metric on $\BB$ extends smoothly to 
 $\overline{\BB}$, and  $\overline{\BB}$  is strictly mean convex.
\end{enumerate}
 In  Theorem~\ref{laminationproperties} of Section~\ref{thetasurfaces},   
 we proved that, away from a closed subset $K\subset I$, a subsequence of the $D_n$ converge smoothly to a limit lamination 
 $\Ll$.  The set $K$ is precisely the set on which the curvature of the surfaces $D_n$ blow up.
  In this section we provide conditions under which the convergence is smooth up to the boundary 
in $\partial \BB\setminus \{p^+,p^-\}$. This involves establishing uniform curvature estimates in a neighborhood of points on the boundary of $\BB$.

In Theorem~\ref{curvatureestimate0} in Appendix~\ref{MinimalGraphs}, we prove  the following  curvature estimate.

\begin{theorem}\label{curvatureestimate01}
 Suppose in addition to~\eqref{boundary-form-item} and~\eqref{metric-item}
  that
  the curvature and the first derivative of curvature of
   $\gamma _n$ are bounded independently of $n$.
  Then  the curvatures $B(D_n, \cdot)$ are uniformly  bounded away from $\overline{I}$.
\end{theorem}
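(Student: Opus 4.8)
The plan is to reduce the boundary curvature estimate to a compactness argument combined with the tangent cone analysis already carried out in the proof of Theorem~\ref{UniqueEmbeddedDisk}. Suppose the estimate fails: then there is a sequence of points $q_n\in D_n$ with $q_n\to q\in\partial\BB\setminus\overline I$ and $|B(D_n,q_n)|\to\infty$. After passing to a subsequence, $q_n$ converges to a boundary point $q$ that lies on some limit curve $\gamma$ (using that the curvature and first derivative of curvature of $\gamma_n$ are uniformly bounded, so a subsequence of the $\gamma_n$ converges in $C^1$, indeed $C^{1,\alpha}$, to a limit curve $\gamma\subset\partial\BB$ meeting each horizontal circle once). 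I would then rescale: translate $q_n$ to the origin and dilate the ambient manifold by $|B(D_n,q_n)|$, producing surfaces $\widetilde D_n$ with $|B(\widetilde D_n,0)|=1$. Under this blow-up the ambient metric converges to a flat metric on a halfspace (or all of $\RR^3$, if $q$ were an interior point, which is already excluded by Theorem~\ref{laminationproperties}), and the rescaled boundary curves $\widetilde\gamma_n$ flatten to a straight line $\ell$ (their curvature, being bounded, is killed by the dilation). The boundary portion $I$ recedes to infinity because $q\notin\overline I$.

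The key structural input is that each $\widetilde D_n$ is still a minimal $\theta$-graph, hence stable — the rotational Killing field $\partial/\partial\theta$ gives a nonvanishing Jacobi field, exactly as in the proof of Theorem~\ref{laminationproperties}. Stability plus the (scale-invariant) area bound \eqref{eq:area-bound} for graphs over the universal cover gives uniform local area bounds and uniform interior and boundary curvature estimates for minimal surfaces in the limit, so a subsequence of the $\widetilde D_n$ converges smoothly, with multiplicity one near $0$, to a stable minimal surface $\widetilde D$ in a halfspace of $\RR^3$, with free boundary... no: with boundary the straight line $\ell$ and with $|B(\widetilde D,0)|=1$, so $\widetilde D$ is nonflat. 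Here the boundary regularity at the limit boundary is handled exactly as in Theorem~\ref{UniqueEmbeddedDisk}: a tangent cone to $\widetilde D$ along $\ell$ is a halfplane with multiplicity one (the cone lies in a halfspace, its boundary is the line $\ell$ with multiplicity one, and reflecting gives a plane), so Allard's boundary regularity theorem applies and the convergence is smooth up to $\ell$.

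It remains to derive a contradiction from the existence of such a nonflat stable $\widetilde D$. The cleanest route is to use Schwarz reflection: since $\widetilde D$ meets the totally geodesic boundary plane of the halfspace along $\ell$, and (by the boundary regularity just established) does so smoothly, reflecting $\widetilde D$ across that plane produces a complete, embedded, stable minimal surface in $\RR^3$ — and by the classical Fischer-Colbrie–Schoen / do Carmo–Peng / Pogorelov theorem, the only complete stable minimal surfaces in $\RR^3$ are planes. (Stability of the doubled surface follows because the reflection is an isometry fixing $\ell$, so a positive Jacobi field on $\widetilde D$ — restrict the one coming from $\partial/\partial\theta$, or better, from the stability inequality directly — extends; alternatively argue that each half is stable and the reflection symmetry lets one assemble a global test function, or simply invoke that a minimal surface that is a graph in the sense of the universal cover has the area bound \eqref{eq:area-bound}, forcing it to be a plane by the monotonicity formula.) Either way, $\widetilde D$ is flat, contradicting $|B(\widetilde D,0)|=1$. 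This contradiction establishes the theorem. I expect the main obstacle to be the bookkeeping in the blow-up: verifying that the rescaled ambient metrics converge in $C^\infty_{\mathrm{loc}}$ to the flat metric, that the rescaled boundary curves converge to a line (which needs the bound on the \emph{first derivative} of curvature of $\gamma_n$, not just the curvature, to get $C^{1,\alpha}$ and then $C^{2,\alpha}$ boundary control adequate for Allard), and that no loss of boundary occurs — i.e. that the limit genuinely has nonempty boundary rather than the base point escaping to an interior limit. Once the blow-up limit is correctly identified as a nonflat stable minimal surface with a straight-line boundary in flat space, the contradiction via Schwarz reflection and Fischer-Colbrie–Schoen is immediate.
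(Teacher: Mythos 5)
Your overall strategy — argue by contradiction, rescale at points of curvature blow-up, and identify the blow-up limit as a nonflat minimal surface in a flat halfspace bounded by a straight line — is exactly the strategy of the paper's proof (Theorem~\ref{curvatureestimate0} in Appendix~\ref{MinimalGraphs}). But two steps have genuine gaps. First, the compactness of the rescaled surfaces: you normalize so that $|B(\widetilde D_n,0)|=1$, but that controls the curvature at one point only; nothing you say prevents the curvature of $\widetilde D_n$ from still blowing up at points a bounded rescaled distance from $0$, in which case no smooth subsequential limit exists. Your appeal to ``uniform interior and boundary curvature estimates'' from stability is circular here — Schoen's stability estimate degenerates near $\partial D_n$, and a uniform boundary curvature estimate is precisely the statement being proved. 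The paper fixes this by the standard point-picking device: choose $p_n$ to maximize the scale-invariant quantity $B(M_n,p)\,\dist(p,\UU^c)$, which yields $B(M_n',p)\le \bigl(1-\dist(p,0)/\dist(0,(\UU_n')^c)\bigr)^{-1}$ on the rescaled surfaces and hence uniform curvature bounds on balls of any fixed radius. This is routine but cannot be omitted.

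The more serious gap is your final classification step. Fischer-Colbrie--Schoen requires the \emph{doubled} surface to be stable, and none of your three justifications establishes that. Stability of each half does not imply stability of the double: half of Enneper's surface is area-minimizing (hence stable with Dirichlet conditions), yet Enneper's surface is unstable; likewise the half-helicoid bounded by its axis is stable while the helicoid is not. The positive Jacobi field $\nu\cdot v$ inherited from $\partial/\partial\theta$ (where $v$ is the blow-up limit of that Killing field, a constant vector tangent to $\partial\BB$ at $q$ but transverse to $\ell$) does not extend positively across $\ell$: the $180^\circ$ rotation about $\ell$ sends $v$ to a different vector, so on the reflected piece the natural Jacobi field is $\nu\cdot\rho_\ell v$, not $\nu\cdot v$, and positivity is lost. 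And quadratic area growth plus monotonicity certainly does not force planarity (the catenoid and Enneper's surface both have quadratic area growth). What actually closes the argument is the stronger information that the limit is \emph{area-minimizing} with area growth at most $3\pi r^2$ and boundary a straight line in the boundary plane of the halfspace: the paper then either invokes P\'erez's theorem (such a surface is a halfplane or half of Enneper's surface, and the latter does not lie in a halfspace), or doubles and uses Li's theorem (quadratic area growth plus finite topology implies finite total curvature) together with the fact that the only complete, simply connected, \emph{embedded} minimal surface of finite total curvature is the plane. Note that embeddedness or area-minimization is essential in this last step; stability alone of the half does not suffice.
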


This uniform curvature estimate is enough to conclude that the boundaries  of the leaves of a limit lamination  $\Ll$ are regular at the points of their boundary in $\partial \BB\setminus \ppm$. We already know from 
Theorem~\ref{laminationproperties} that they are regular at the points of $I\subset\partial D_n$ that are not in the curvature blowup set $K$.

 Theorem~\ref{curvatureestimate0}  of Appendix~\ref{MinimalGraphs} is stated in terms of minimal graphs in $V\times\RR$, where $V=\{(x,0,z)\,:\,x>0, \, (x,0,z)\in \BB\}$.  As explained in that Appendix and in Section~\ref{ThetaGraphGraphs}, this is equivalent to the situation considered in Theorem~\ref{curvatureestimate01} above.

Note that if the curves $\gamma_n$ converge smoothly to a lamination of 
 $\partial \BB\setminus \ppm$, then (away from $I$) we have uniform bounds on the curvature
 of $\gamma_n$ and the first derivative of curvature.  
 Therefore we can use Theorem~\ref{curvatureestimate01} to conclude 
 smooth convergence up to the boundary:

\begin{theorem} \label{smoothboundarybehavior}
Suppose, in addition to~\eqref{boundary-form-item} and~\eqref{metric-item},
that  the $D_n$ converge smoothly in $\BB\setminus Z$ to a lamination
 $\LL$, and that the curves $\gamma_n$ converge smoothly to a lamination $\GG$ of 
  $\partial\BB\setminus \ppm$.
  Then the convergence $D_n\to\LL$ is smooth
  up to $\partial\BB\setminus \ppm$.
  
In particular, if 
 $L$ is a leaf of $\Ll$, and if $L^*$ is a lift of it to the universal cover $\mathcal{U}$ of 
$\overline{\BB} \setminus Z$,   then the closure $\overline{L^*}$ of $L^*$ in 
$\mathcal{U}$ is a smooth embedded
manifold-with-boundary, and each component of $\partial \overline{L^*}$ projects
to a leaf $\gamma$ of $\mathcal{G}$.  Furthermore, every leaf $\gamma$ of $\mathcal{G}$
arises in this way: if $\gamma\in\mathcal{G}$, there is a lift $L^*$ of a leaf of $\Ll$ 
and a component of $\partial \overline{L^*}$ that projects to $\gamma$.
\end{theorem}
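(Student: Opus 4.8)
The plan is to combine the interior smooth convergence $D_n \to \Ll$ in $\BB \setminus Z$ (already in hand) with the uniform curvature bound of Theorem~\ref{curvatureestimate01} and then invoke the boundary-regularity convergence machinery of Theorem~\ref{white-allard-bdry-theorem}, working in the universal cover $\UU$ of $\overline{\BB}\setminus Z$ where the $\theta$-graphs become honest graphs. First I would check that the hypothesis of Theorem~\ref{curvatureestimate01} is satisfied: since $\gamma_n \to \GG$ smoothly on $\partial\BB \setminus \ppm$, on any compact subset of $\partial\BB\setminus\ppm$ the curvature of $\gamma_n$ and its first derivative are uniformly bounded; combined with~\eqref{metric-item}, Theorem~\ref{curvatureestimate01} then gives that $B(D_n,\cdot)$ is uniformly bounded in a neighborhood (in $\overline{\BB}$) of any point of $\partial\BB \setminus \ppm$, i.e. away from $\overline{I}$. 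This uniform second-fundamental-form bound up to the boundary, together with the boundary convergence $\gamma_n \to \GG$, yields (by standard elliptic boundary estimates for the minimal surface system, applied in graphical coordinates over $\partial\BB$) uniform $C^{k}$ bounds on $D_n$ up to $\partial\BB\setminus\ppm$ for every $k$, hence subsequential smooth convergence up to that part of the boundary; since the interior limit is already the fixed lamination $\Ll$, the full sequence converges smoothly to $\overline{\Ll}$ up to $\partial\BB\setminus\ppm$.

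Next I would transfer this to the universal cover. Let $\pi:\UU \to \overline{\BB}\setminus Z$ be the covering; as in Section~\ref{ThetaGraphGraphs}, each $D_n$ lifts to a graph $\tilde D_n$ over a domain in $V\times\RR$ (with $\partial \tilde D_n$ a lift of $\gamma_n$), and minimality is preserved for the pulled-back metric $g^*$. The uniform curvature bound just obtained for $D_n$ away from $\overline{I}$ pulls back to a uniform curvature bound for $\tilde D_n$ away from (the preimages of) $\overline{I}$; combined with the graphical area bound~\eqref{eq:area-bound} from Proposition~\ref{proper-lift-proposition}, a subsequence of the $\tilde D_n$ converges smoothly on compact subsets of $\UU$ (including up to $\partial\UU$ lying over $\partial\BB\setminus\ppm$) to a properly embedded limit, which must be a lift of a leaf of $\Ll$. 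Applying Theorem~\ref{white-allard-bdry-theorem} with $M = \overline{L^*}$ and $M_n = \overline{\tilde D_n}$ (in the open set $\mathcal O \subset \UU$ lying over $\overline{\BB}\setminus(\ppm \cup Z)$, so that $\partial M_n \to \partial M$ smoothly by the boundary convergence just established, and with the set-convergence hypothesis~\eqref{eq:set-convergence} holding because the convergence is already smooth over $\partial\BB\setminus\ppm$ and over the interior) then gives that $\overline{L^*}$ is a smooth embedded manifold-with-boundary in $\UU$ and that each boundary component of $\overline{L^*}$ is a smooth curve projecting into $\partial\BB\setminus\ppm$; since each $\partial \tilde D_n$ projects to $\gamma_n$ and $\gamma_n \to \GG$, each such boundary component projects onto a leaf $\gamma$ of $\GG$. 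The "conversely" statement is obtained by reversing the construction: given $\gamma\in\GG$, pick $p\in\gamma$ and points $p_n \in \gamma_n$ with $p_n \to p$, lift to $\tilde p_n \in \partial\tilde D_n$, normalize by deck transformations so $\tilde p_n \to \tilde p$, and extract a convergent subsequence of $\tilde D_n$; its limit is a lift $L^*$ of a leaf of $\Ll$ whose boundary contains $\tilde p$, hence has a component projecting to $\gamma$.

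The main obstacle is the passage from the uniform curvature bound up to the boundary to full uniform $C^k$ (hence smooth subsequential convergence up to the boundary), i.e. the boundary Schauder/higher-regularity step: one must set things up in graphical coordinates over $\partial\BB$ near a point of $\partial\BB\setminus\ppm$, verify that the uniform $|B|$-bound plus the uniform $C^{k}$-control of $\gamma_n$ feed into the linear elliptic boundary estimates for the (nonparametric) minimal surface equation to give bounds independent of $n$, and do this uniformly on compact subsets of $\partial\BB\setminus\ppm$. This is essentially routine given Theorem~\ref{curvatureestimate01}, but it is the only place where genuine PDE input beyond the cited theorems enters; everything else is an application of Theorem~\ref{white-allard-bdry-theorem}, Theorem~\ref{theta-boundary-theorem}, and the lifting picture of Section~\ref{ThetaGraphGraphs}.
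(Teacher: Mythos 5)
Your proposal is correct and follows essentially the same route as the paper, which states Theorem~\ref{smoothboundarybehavior} as a direct consequence of the uniform boundary curvature estimate of Theorem~\ref{curvatureestimate01} together with the smooth convergence $\gamma_n\to\GG$, and leaves the remaining steps (elliptic bootstrapping from the $|B|$-bound to uniform $C^k$ control, passage to the universal cover, and the subsequence/limit-extraction arguments for both directions of the correspondence between boundary components and leaves of $\GG$) as routine. You have simply written out those intended details explicitly, and correctly identified the Schauder boundary-regularity step as the only place where additional (standard) PDE input is needed.
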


\begin{corollary}\label{smoothboundarycorollary}
If $\gamma$ is a rotationally invariant leaf of $\GG$, 
then $\LL$ contains a rotationally invariant leaf with $\gamma$ as one of its boundary
components.
\end{corollary}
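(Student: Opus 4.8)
The plan is to read the corollary off Theorem~\ref{smoothboundarybehavior}, using the structure theory of Theorem~\ref{laminationproperties} and the description of rotationally invariant leaves in Proposition~\ref{additional-properties-proposition} to deal with one awkward case. A rotationally invariant leaf $\gamma$ of $\GG$ is necessarily a horizontal circle $\partial\BB\cap\{x_3=c\}$, $c\in(-1,1)$, because rotations about $Z$ act on $\partial\BB$ with orbits the horizontal circles and $\gamma$ is a connected $1$-manifold. By the last part of Theorem~\ref{smoothboundarybehavior} there are a leaf $L$ of $\LL$, a lift $L^*$ to the universal cover $\mathcal{U}$ of $\overline{\BB}\setminus Z$, and a component $\delta$ of $\partial\overline{L^*}$ with $\pi(\delta)=\gamma$. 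Since $\pi^{-1}(\gamma)$ is the single vertical line $\ell=\{(\sqrt{1-c^2},c)\}\times\RR$ in $\mathcal{U}$, on which $\pi$ restricts to the universal covering $\RR\to\gamma$, and since $\delta$ is a closed, connected $1$-submanifold of $\ell$ mapping onto $\gamma$, we conclude $\delta=\ell$; in particular $\ell\subset\overline{L^*}$. I would also record an observation to be used twice: if $\Sigma$ is any rotationally invariant leaf of $\LL$ with $\gamma\subset\overline{\Sigma}$, then $\gamma$ is a boundary component of $\Sigma$---indeed, by Proposition~\ref{additional-properties-proposition} the closure $\overline{\Sigma}$ is a smoothly embedded closed disk or annulus whose boundary circles lying on $\partial\BB$ are rotationally invariant, hence horizontal, and $\gamma\subset\partial\BB$ cannot lie in the interior of $\overline{\Sigma}$.

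By Statement~\ref{main-theorem-item1} of Theorem~\ref{laminationproperties}, $L$ is either rotationally invariant or a $\theta$-graph. If it is rotationally invariant, the observation finishes the proof. Suppose instead $L$ is a $\theta$-graph. By Statement~\ref{main-theorem-item4} of Theorem~\ref{laminationproperties}, $L$ is a spanning $\theta$-graph in its component $\mathcal{O}$ of $\BB\setminus\overline{\cup\LL'}$, so $L^*$ is the graph of a single-valued angle function $\theta_L$ over the cross section $V_L$ of $\mathcal{O}$; the inclusion $\ell\subset\overline{L^*}$ then forces $\theta_L$ to be unbounded near the point $(\sqrt{1-c^2},c)$ (for every $t\in\RR$ there are points of $V_L$ tending to $(\sqrt{1-c^2},c)$ along which $\theta_L\to t$), i.e.\ $L$ spirals into $\gamma$. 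Fixing $q\in\gamma$ and using that $\overline{L^*}$ is a smooth manifold-with-boundary along $\ell$, the preimages of $q$ in $\ell$ yield infinitely many pairwise disjoint ``sheets'' $\Sigma_k$ ($k\in\ZZ$) of $L$ near $q$, each a minimal disk carrying an arc of $\gamma$ through $q$ on its smooth boundary and---by the uniform curvature estimate of Theorem~\ref{curvatureestimate01}---with uniformly bounded second fundamental form near $q$. Letting $k\to\infty$, this bound together with the smooth boundary convergence of Theorem~\ref{smoothboundarybehavior} produces (after passing to a subsequence) a nonempty minimal surface $\Sigma\subset\overline{\BB}$ carrying an arc of $\gamma$ through $q$ on its boundary. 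Now $\Sigma\subset\overline{L}$ but $\Sigma\not\subset L$---otherwise continuity of $\theta_L$ on $V_L$ would force the values $\theta_L\approx\theta_q+2\pi k$ on the (convergent) cross sections of the $\Sigma_k$ to tend to a finite limit, which they do not---so $\Sigma\subset\overline{L}\setminus L$. Since by Statement~\ref{main-theorem-item4} the component $\mathcal{O}$ contains no leaf of $\LL$ besides $L$, we have $\overline{L}\setminus L\subset\overline{\cup\LL'}$, and as $q$ lies far from $Z$ it follows that near $q$ the surface $\Sigma$ lies in a single rotationally invariant leaf $\Sigma'$ of $\LL$, with $q\in\overline{\Sigma'}$ and hence $\gamma\subset\overline{\Sigma'}$. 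Applying the observation to $\Sigma'$ completes the proof.

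The main obstacle is the $\theta$-graph case, and specifically the sheet-extraction step: one must verify that the non-compact boundary component $\ell$ of $\overline{L^*}$ carries a collar that is uniform enough along the translates of a fixed point, so that the sheets $\Sigma_k$ do not degenerate; one must control the limit $\Sigma$ near $q$ (allowing for possible higher multiplicity in the convergence, and showing that $\Sigma$ lies in a single leaf rather than sweeping across several); and one must use the strict mean convexity of $\overline{\BB}$ to exclude tangential contact of the $\Sigma_k$ with $\partial\BB$, which is what ensures that $\Sigma$ genuinely carries a boundary arc on $\partial\BB$ and therefore that the rotationally invariant leaf $\Sigma'$ produced in the limit really does have all of $\gamma$---not merely a single point of it---as a boundary component.
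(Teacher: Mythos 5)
Your proof reaches the right conclusion, and the easy case (the leaf $L$ furnished by Theorem~\ref{smoothboundarybehavior} is itself rotationally invariant) together with your preliminary observation is exactly what is needed; but in the case where $L$ is a $\theta$-graph you take a genuinely different and much heavier route than the paper. You show that $L$ spirals into $\gamma$, extract infinitely many sheets of $L$ along an arc of $\gamma$, and run a compactness argument (curvature bounds from Theorem~\ref{curvatureestimate01}, divergence of the angle function, connectedness) to produce a rotationally invariant leaf of $\LL$ inside $\overline{L}\setminus L$ whose closure contains $\gamma$. The paper instead reads the whole case off Statement~\ref{main-theorem-item4} of Theorem~\ref{laminationproperties}: if $L$ is not rotationally invariant, then $L$ and its rotations about $Z$ foliate its component $\Omega$ of $\BB\setminus\overline{\cup\LL'}$, a rotationally invariant region whose boundary in $\BB$ consists of rotationally invariant leaves of $\LL$; since $\gamma\subset\overline{L}\subset\overline{\Omega}$, two of those bounding leaves must each have $\gamma$ as a boundary component. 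That single use of the foliation structure replaces your entire sheet-extraction step and makes the three technical points you flag at the end (non-degeneration of the sheets, multiplicity and single-leaf control of the limit, transversality at $\partial\BB$) moot. Be aware that those flagged points are real obligations of your route --- as written your argument is not complete without them --- but each is dischargeable with tools already in the paper (the uniform curvature bound of Theorem~\ref{curvatureestimate01} away from $\overline I$, the boundary maximum principle at the strictly mean convex $\partial\BB$, and the fact that $\cup\LL'$ is closed in $\BB\setminus Z$), so I would count your proposal as a correct but unnecessarily laborious alternative rather than a flawed one.
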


\begin{proof}
Let $L$ be a leaf of $\Ll$ associated to $\gamma$ as in
 Theorem~\ref{smoothboundarybehavior}.
 (That is, suppose $L$ and $\gamma$ have lifts $L^*$ and $\gamma^*$
 to the universal cover of $\overline{\BB}\setminus Z$ such that $L^*\cup \gamma^*$
 is a smooth manifold-with-boundary.)
If $L$ is rotationally invariant, we are done.  If not, $L$ and its images
under rotations about $Z$ foliate a rotationally invariant region $\Omega$
in $\BB$.  Note that $\Omega$ is bounded by rotationally invariant leaves of $\Ll$.
Two of those leaves must each have $\gamma$ as a boundary component.
\end{proof}


\section{Necessary conditions for a  lamination to appear as the limit leaves of the limit lamination
of a sequence of minimal $\theta$-graphs.}
\label{necessaryconditions}

As in Theorem~\ref{laminationproperties}, 
let $D_n$ be a sequence of oriented spanning  minimal $\theta$-graphs
in $\BB$ that converge smoothly in $\BB\setminus Z$ to a
 lamination $\LL$ of $\BB\setminus Z$.
Let $\Rr_n$ be the oriented foliation of $\BB\setminus Z$ consisting of $D_n$ and its rotated
images.   Let $\nu_n$ be the unit normal vectorfied to $\mathcal{D}_n$ compatible with the
orientation.   
Note that $\Rr_n$ will converge to an oriented rotationally invariant foliation $\Rr$
of $\BB\setminus Z$.  In particular, the vectorfields $\nu_n$ converge uniformly on compact
subsets of $\BB\setminus Z$ to the unit normal vectorfield $\nu$ to $\Rr$ compatible with the orientation
of $\Rr$.

The rotationally invariant leaves of $\Rr$ are precisely the rotationally invariant leaves of $\LL$,
that is the leaves of $\LL'$.

In this section, we will prove some additional properties of the collection of rotationally
invariant leaves $\LL'$.   

\begin{proposition}
Suppose that $\Rr$ is an oriented, minimal foliation of an open subset of a Riemannian manifold.
 Then
\[
  \Div \nu=0
\]
where $\nu$ is the unit normal vectorfield to $\Rr$ given by the orientation.
\end{proposition}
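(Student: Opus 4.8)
The plan is to reduce the statement to the standard pointwise identity relating the ambient divergence of a unit normal field to the mean curvature of the hypersurface it is normal to, and then to invoke minimality of the leaves. Since $\Rr$ is an oriented foliation by hypersurfaces, the orientation singles out a globally defined unit normal vectorfield $\nu$ on the ambient open set; in particular $\Div\nu$ is a well-defined function there, and it suffices to prove that it vanishes at every point $p$.

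Fix such a $p$, let $L$ be the leaf of $\Rr$ through $p$, and (since $\nu$ is smooth and nowhere zero) choose a local orthonormal frame $e_1,\dots,e_n$ near $p$ with $e_n=\nu$ and with $e_1,\dots,e_{n-1}$ tangent to the leaves of $\Rr$. Then
\[
  \Div\nu \;=\; \sum_{i=1}^{n}\langle\nabla_{e_i}\nu,\,e_i\rangle
           \;=\; \sum_{i=1}^{n-1}\langle\nabla_{e_i}\nu,\,e_i\rangle \;+\; \langle\nabla_{\nu}\nu,\,\nu\rangle .
\]
The last term is zero because $|\nu|\equiv 1$ forces $\langle\nabla_\nu\nu,\nu\rangle=\tfrac12\nabla_\nu|\nu|^2=0$. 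For the remaining sum I would observe that, for vectors $X,Y$ tangent to $L$, the quantity $\langle\nabla_X\nu,Y\rangle$ is tensorial in both arguments (using $\langle\nu,Y\rangle=0$) and equals $-A_L(X,Y)$, where $A_L$ is the scalar second fundamental form of $L$ with respect to $\nu$; summing over the frame gives $\sum_{i=1}^{n-1}\langle\nabla_{e_i}\nu,e_i\rangle=-\operatorname{tr}A_L=-H_L(p)$, the (unnormalized) mean curvature of $L$ at $p$. Since every leaf of $\Rr$ is minimal, $H_L(p)=0$, hence $\Div\nu=0$ at $p$. As $p$ was arbitrary, $\Div\nu\equiv 0$.

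The argument is elementary, and I do not expect a genuine obstacle; the only point deserving a remark is the regularity of $\nu$, which must be at least $C^1$ for the computation to be meaningful. This is automatic in the situation at hand: a minimal foliation is smooth, and in the application $\Rr$ arises as a smooth limit of the smooth foliations $\Rr_n$. Also, since the proof uses only that $H_L$ vanishes, the particular sign conventions chosen for $A_L$ and $H_L$ are immaterial.
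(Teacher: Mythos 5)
Your proof is correct and follows essentially the same route as the paper: both decompose $\Div\nu$ into the tangential divergence along the leaf (which equals the mean curvature, up to sign, and vanishes by minimality) plus the normal term $\langle\overline\nabla_\nu\nu,\nu\rangle$ (which vanishes since $\nu$ is unit). Your version merely makes the frame computation explicit where the paper cites the identity $\Div X=\ddiv X+\overline\nabla_\nu X\cdot\nu$ directly.
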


\begin{proof}
Let    $\overline\nabla$ denote covariant differentiation with respect to the metric, 
and  let  $\ddiv$ denote the divergence operator on a fixed leaf  of $\Rr$.  We have 
 for any vectorfield $X$:
 \[
 \Div X =\ddiv X +\overline\nabla _\nu X\cdot \nu,
 \]
  so
 \[
 \Div \,\nu =\ddiv\, \nu  + \overline\nabla _\nu \nu\cdot \nu = H+ \frac{1}{2} \nu(\nu\cdot\nu),
 \]
 where $H$ is the mean curvature of the  fixed leaf of $\mathcal R$.  Since all leaves of $\mathcal R$ are minimal and since $\nu$ has unit length, 
 $
 \Div \,\nu =0
 $.
 \end{proof}

\begin{corollary}\label{no-closed-surfaces-corollary}
Suppose that $\Rr$ is an oriented foliation of $\BB\setminus Z$
by surfaces that are minimal with respect to a smooth Riemannian metric on $\BB$, where
$\BB$ is the open unit ball in $\RR^3$.

If $\Sigma$ is a closed, connected, embedded surface in $\BB$,
then $\Sigma\setminus Z$ cannot be a leaf of $\Rr$.
\end{corollary}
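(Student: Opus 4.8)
The plan is to argue by contradiction, combining the identity $\Div\nu=0$ from the preceding proposition with the divergence theorem applied to the region bounded by $\Sigma$.

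So suppose $L:=\Sigma\setminus Z$ is a leaf of $\Rr$. Since $L$ carries the globally defined unit normal $\nu$ it is two-sided, and (the normal bundle of the smooth surface $\Sigma$ being a line bundle and $\Sigma\setminus Z$ being connected and dense in $\Sigma$) $\nu$ extends by continuity to a unit normal field along all of $\Sigma$; thus $\Sigma$ is a two-sided closed embedded surface in the ball $\BB$. By the Jordan--Brouwer separation theorem, $\BB\setminus\Sigma$ has exactly two components, exactly one of which, call it $\Omega$, has compact closure contained in $\BB$; moreover $\partial\Omega=\Sigma$. Along $\Sigma\setminus Z$ the outward unit normal $n$ of $\Omega$ is orthogonal to $T\Sigma=TL$, hence equals $\pm\nu$, and by connectedness of $\Sigma\setminus Z$ the sign is constant.

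For small $\epsilon>0$ put $N_\epsilon=\{x\in\BB:\dist(x,Z)<\epsilon\}$ and $\Omega_\epsilon:=\Omega\setminus\overline{N_\epsilon}$. Then $\overline{\Omega_\epsilon}$ is a compact subset of $\BB\setminus Z$, so $\nu$ is smooth near it, and for every $\epsilon$ that is a regular value of $\dist(\cdot,Z)|_\Sigma$ (a full-measure set, by Sard) the hypersurfaces $\Sigma$ and $\partial N_\epsilon$ meet transversally, so $\Omega_\epsilon$ is a Lipschitz domain and the divergence theorem applies. Since $\Div\nu\equiv0$ on $\BB\setminus Z$,
\[
0=\int_{\Omega_\epsilon}\Div\nu\,dV=\int_{\Sigma\setminus\overline{N_\epsilon}}\langle\nu,n\rangle\,dA+\int_{(\partial N_\epsilon)\cap\overline{\Omega}}\langle\nu,n\rangle\,dA.
\]
On $\Sigma\setminus\overline{N_\epsilon}$ the integrand is the constant $\langle\nu,n\rangle=\pm1$, so the first term has absolute value $\area(\Sigma\setminus\overline{N_\epsilon})$, which tends to $\area(\Sigma)>0$ as $\epsilon\to0$ (the curve $Z$ has zero area in $\Sigma$). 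The second term is bounded in absolute value by $\area\big((\partial N_\epsilon)\cap\overline\Omega\big)\le\area(\partial N_\epsilon)$, and for small $\epsilon$ the set $\partial N_\epsilon$ is the lateral boundary of a thin tube of radius $\epsilon$ about the compact segment $Z\cap\overline{\BB}$, which has area $O(\epsilon)$. Letting $\epsilon\to0$ through admissible values yields $\area(\Sigma)\le 0$, a contradiction.

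The routine topological input (existence of $\Omega$ with $\overline\Omega\subset\BB$ and $\partial\Omega=\Sigma$) is standard separation theory. The one point that genuinely needs care — and that I expect to be the main obstacle — is the legitimacy of the divergence theorem on the non-smooth domain $\Omega_\epsilon$ together with the vanishing of the boundary contribution along $\partial N_\epsilon$; both are handled using $|\nu|\equiv1$ and the elementary fact that the $\epsilon$-tube about a segment in a smooth Riemannian $3$-manifold has lateral area $O(\epsilon)$.
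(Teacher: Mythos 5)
Your argument is correct and is essentially the paper's proof: take the region $\Omega$ enclosed by $\Sigma$, apply the divergence theorem to $\nu$ there using $\Div\nu=0$, and conclude $\pm\area(\Sigma)=0$, with the singularity of $\nu$ along $Z$ handled by excision since $Z$ has zero $2$-dimensional measure. The only difference is that you prove the needed excision estimate inline (cutting out an $\epsilon$-tube about the segment and bounding its lateral area by $O(\epsilon)$), whereas the paper invokes its Generalized Divergence Theorem from the Appendix, which is proved by exactly the same kind of covering argument.
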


\begin{proof}
Let $U$ be the region in $\BB$ bounded by $\Sigma$.
Now $\nu$ is not defined on $Z$, but $Z$ is a closed set with $2$-dimensional Hausdorff measure $0$,
so even if $U\cap Z$ is not empty, 
we can apply the Divergence Theorem~\ref{generalized-divergence-theorem} on $U$ to get:
\[
   \int_{\Sigma} \nu\cdot n\,dA = \int_U\Div \nu = 0, \tag{*}
\]
where $n$ is the unit normal to $\Sigma$ that points out of $U$.
If $\Sigma$ were a leaf of $\LL$, then either $n=\nu$ on $\Sigma$
or $n=-\nu$ on $\Sigma$, so that the left side of~\thetag{*} would be equal 
 to plus or minus the area of $\Sigma$, and thus the area of $\Sigma$ would be $0$, which
 is impossible.
\end{proof}

\begin{corollary}\label{annoying-corollary}
Suppose that the Riemannian metric in Corollary~\ref{no-closed-surfaces-corollary} 
extends smoothly to $\overline{\BB}$.
Let $\Sigma$ be an annulus in $\BB$ such that the two boundary components
of $\Sigma$ are the same smooth, simple closed curve in $\partial \BB$.
Then $\Sigma$ cannot be a leaf of $\LL$.
\end{corollary}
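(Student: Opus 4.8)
The plan is to adapt the calibration (divergence-theorem) argument used for Corollary~\ref{no-closed-surfaces-corollary}, the only new feature being that the relevant closed surface now meets $\partial\BB$. First I would reduce to the case that $\Sigma$ is rotationally invariant. By Theorem~\ref{laminationproperties} a leaf of $\LL$ is either rotationally invariant --- and then it is one of the leaves of $\Rr$, so that $\nu$ is normal to it --- or it is a $\theta$-graph; and a $\theta$-graph that is a smooth annulus-with-boundary cannot have its two boundary circles coincide, since the parametrization $(r,z)\mapsto(r\cos\theta(r,z),r\sin\theta(r,z),z)$ is injective on the closure of its domain $V$ (a point's distance to $Z$ and its height recover $(r,z)$), so equality of the two boundary circles would force the two components of $\partial V$ to coincide, which is impossible for an annular $V$. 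Thus assume $\Sigma$ is a rotationally invariant leaf of $\Rr$ and write $\overline\Sigma=\Sigma\cup\gamma$.

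Next I would produce the region bounded by $\overline\Sigma$. The set $\overline\Sigma$ is a compact, connected surface in $\RR^3$ (the two ends of the annulus both accumulating on $\gamma$), so it separates $\RR^3$ into a bounded open set $U$ and an unbounded one, with $\partial U=\overline\Sigma$. Since $\RR^3\setminus\overline\BB$ is connected, unbounded, and disjoint from $\overline\Sigma$, it lies in the unbounded component; hence $U\subset\BB$. Moreover the reduced boundary $\partial^*U$ coincides $\mathcal{H}^2$-a.e.\ with the smooth surface $\Sigma$, the exceptional set being the curve $\gamma$. Now argue exactly as in Corollary~\ref{no-closed-surfaces-corollary}: by the Proposition above, $\Div\nu\equiv0$ on $\BB\setminus Z$, and $Z\cap U$ is $\mathcal{H}^3$-null, so $\int_U\Div\nu\,dV=0$; on the other hand the generalized divergence theorem (Theorem~\ref{generalized-divergence-theorem}) applies --- the exceptional sets $Z\cap\overline U$ and $\gamma$ being closed with vanishing $2$-dimensional Hausdorff measure and $\nu$ being bounded --- and gives $\int_U\Div\nu\,dV=\int_\Sigma\langle\nu,n\rangle\,d\mathcal{H}^2$, where $n$ is the outward unit normal of $U$ along $\Sigma$. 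Since $\Sigma$ is a leaf of $\Rr$, $\nu$ is normal to $\Sigma$, so $\langle\nu,n\rangle\equiv\pm1$; as $\Sigma$ is connected and $\langle\nu,n\rangle$ is continuous, the sign is constant, so the integral equals $\pm\area(\Sigma)\neq0$, a contradiction.

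The step I expect to be the main obstacle is making the divergence theorem rigorous along $\gamma$, where $\overline\Sigma$ is only the union of two smooth sheets limiting onto the same curve and need not be a smooth manifold: one must know that $U$ has finite perimeter with $\partial^*U=\Sigma$ up to an $\mathcal{H}^2$-null set, which in particular needs $\area(\Sigma)<\infty$ and some control of how $\Sigma$ approaches $\partial\BB$ along $\gamma$. In the setting where the corollary is applied --- $\Sigma$ a rotationally invariant minimal leaf whose closure is compact in $\overline\BB$, as in Proposition~\ref{additional-properties-proposition} --- these follow from the surface-of-revolution description of $\Sigma$ together with the boundary behavior already established there; in complete generality one would either impose finite area as a hypothesis or deduce it from minimality and the ambient hypotheses. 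The remaining points (continuity of $n$ on the smooth surface $\Sigma$, and constancy of the sign of $\langle\nu,n\rangle$) are immediate.
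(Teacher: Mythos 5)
Your proposal is correct and follows essentially the same route as the paper, which simply notes that the proof is ``almost identical'' to that of Corollary~\ref{no-closed-surfaces-corollary}: apply the generalized divergence theorem (Theorem~\ref{generalized-divergence-theorem}) to the region $U$ enclosed by $\overline{\Sigma}$, treating $Z\cap\overline{U}$ and the common boundary curve $\gamma$ as the $\mathcal{H}^2$-null exceptional set, and derive $\area(\Sigma)=0$ from $\nu\cdot n\equiv\pm1$. Your extra preliminary reduction to the rotationally invariant case and the finite-perimeter remarks are harmless additions but not needed beyond what the paper's one-line justification already covers.
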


The proof is almost identical to the proof of Corollary~\ref{no-closed-surfaces-corollary}.

\begin{theorem}\label{minimizing-theorem}
Let $\Rr$ be an oriented, minimal foliation of $\BB\setminus Z$ that is rotationally
invariant about $Z$ (with respect to a smooth, rotationally invariant metric on $\BB$), 
and let $\nu$ be the associated unit normal vectorfield compatible with
the orientation.  Let $\Ll'$ be the sublamination consisting of 
 the rotationally invariant leaves of $\Rr$.
Let $U$ be a regular open subset of $\BB$ such that $M:=(\partial U)\cap \BB$ consists
of   leaves of $\Ll'$ on which the normal $\nu$ 
 points out of $U$.
Then $M$ is area minimizing.

Furthermore, if the metric extends smoothly to $\overline{\BB}$ and if $M'$ is another 
area-minimizing surface with 
$\partial M=\partial M'$ (as oriented surfaces in $\overline{\BB}$), then $M'$ is also
made up of oriented leaves of $\Ll'$.
\end{theorem}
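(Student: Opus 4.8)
The plan is to use the normal vectorfield $\nu$ as a calibration. Since $\Rr$ is an oriented minimal foliation, the preceding proposition gives $\Div \nu = 0$ on $\BB\setminus Z$, and $\nu$ has unit length, so $\nu$ is a unit-length divergence-free vectorfield; equivalently, the $2$-form $\omega = \iota_\nu \,dV$ (contraction of the volume form with $\nu$) is a closed $2$-form of comass one. I would first record this: for any oriented surface $S$, $\int_S \omega \le \area(S)$, with equality precisely when $S$ is tangent to the foliation with the correct orientation, i.e.\ when the unit normal of $S$ agrees with $\nu$ along $S$. The surface $M = (\partial U)\cap \BB$ is, by hypothesis, made of leaves of $\Ll'$ with $\nu$ pointing out of $U$, so $\nu$ is exactly the outward unit normal of $M$ and hence $\int_M \omega = \area(M)$.

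Next I would run the standard calibration comparison. Let $M'$ be any competitor with $\partial M' = \partial M$ as oriented surfaces (more carefully, as integral currents, so that $M - M' = \partial U'$ for some integral $3$-current $U'$ in $\BB$; homology in $\BB$ is trivial so this is automatic). The delicate point is that $\omega$ is only defined and smooth on $\BB\setminus Z$, while $Z$ is where the foliation degenerates. But $Z$ is a closed set of vanishing $2$-dimensional Hausdorff measure and $\nu$ is bounded (unit length), so $\omega$ is a bounded measurable closed form on $\BB$; alternatively one invokes the Divergence Theorem in the form already cited as Theorem~\ref{generalized-divergence-theorem}, exactly as in the proof of Corollary~\ref{no-closed-surfaces-corollary}, to justify $\int_{M} \omega - \int_{M'}\omega = \int_{U'}\Div\nu = 0$. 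Combining, $\area(M) = \int_M\omega = \int_{M'}\omega \le \area(M')$, so $M$ is area minimizing.

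For the ``furthermore'' statement, suppose the metric extends smoothly to $\overline{\BB}$ and $M'$ is another area-minimizing surface with $\partial M' = \partial M$. Then all the inequalities above are equalities, so $\int_{M'}\omega = \area(M')$. Since $\omega$ has comass one, equality forces the (a.e.-defined) unit normal $n'$ to $M'$ to equal $\nu$ at $\Hh^2$-almost every point of $M'\setminus Z$; as $Z$ is $\Hh^2$-null this holds a.e.\ on $M'$. By the Hardt–Simon interior and boundary regularity for area-minimizing hypersurfaces (applicable since the metric is smooth on $\overline{\BB}$, the ambient dimension is $3$, and $\partial M = \partial M'$ is a smooth curve), $M'$ is a smooth embedded minimal surface away from $Z$, and on it $n' = \nu$ everywhere by continuity; hence each component of $M'\setminus Z$ is everywhere tangent to $\Rr$, i.e.\ is an open subset of a leaf, and being relatively closed in $\BB\setminus\bigl(\cup\Ll'\bigr)^c$—more simply, a connected integral submanifold of a foliation that is relatively closed is a union of leaves—each such component is a union of leaves of $\Rr$ with the correct orientation. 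It remains to check these leaves lie in $\Ll'$, i.e.\ are rotationally invariant: a non-rotationally-invariant leaf of $\Rr$ is a $\theta$-graph, which is noncompact with boundary meeting $Z$ and cannot close up to give a surface with the prescribed boundary $\partial M$ (which consists of leaves of $\Ll'$, hence is rotationally invariant)—this is where one uses the structure from Theorem~\ref{laminationproperties} together with the hypothesis on $\partial M$.

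I expect the main obstacle to be the treatment of the axis $Z$: making rigorous that the calibration argument goes through even though $\omega$ fails to be smooth (or even defined) on $Z$, and that the competitor $M'$ may a priori pass through or accumulate on $Z$. The resolution is that $Z$ has zero $2$-dimensional and hence zero relevant capacity/measure, so it is removable for the flux computation—precisely the device already used in Corollary~\ref{no-closed-surfaces-corollary} via Theorem~\ref{generalized-divergence-theorem}—and for the equality-case analysis. A secondary subtlety is phrasing ``$\partial M = \partial M'$'' and the competitor class correctly (oriented integral currents in $\overline{\BB}$), and invoking the right boundary regularity (Hardt–Simon, Allard) to upgrade the a.e.\ normal agreement to genuine leafwise tangency; these are routine given the smoothness of the metric on $\overline{\BB}$.
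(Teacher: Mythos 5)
Your proposal is correct and is essentially the paper's own argument: the paper also uses $\nu$ as a calibration, computing $\int_M\nu\cdot n = \int_{M'}\nu\cdot n'$ via the generalized divergence theorem (with $Z$ removable as an $\mathcal{H}^2$-null set) applied to the regions $U$ and $U'$, and reads off both the minimization and the equality case from $\nu\cdot n'\le 1$. The only differences are cosmetic (form language versus vector-field language, plus your extra detail in the rigidity step), together with the paper's explicit exhaustion of $\BB$ by smaller balls to handle the case where the metric does not extend to $\overline{\BB}$.
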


Recall that a {\em regular} open set is an open set $U$ 
such that $U={\rm interior}(\overline{U})$.

\begin{proof} 
Case 1: Assume that the metric extends smoothly to $\overline \BB$, 
and that $\overline{M}$ is 
a smooth, embedded manifold with boundary in $\overline{\BB}$.

Let $M'$ be a smoothly embedded, oriented surface in $\BB$ with $\partial M'=\partial M$.
By elementary topology, $M'=(\partial U')\cap \BB$ for some regular open set $U'$ of $\BB$
with 
\begin{equation} \label{Boundaries}
\overline{U'}\cap\partial \BB = \overline{U}\cap\partial \BB. 
\end{equation}
Let 
\[
 \Sigma = \overline{U'}\cap\partial \BB = \overline{U}\cap\partial \BB. 
\]

Let $n$ and $n'$ be the outward-pointing unit normal vectorfields
on $\partial U=M\cup\Sigma$ and on $\partial U'=M'\cup \Sigma$, respectively.
Note that $n=n'$ on $\Sigma$.
Note also that  $n|M=\nu|M$ is the unit normal vectorfield compatible with the
orientation of $M$, and that $n'|M'$ is compatible with orientation of $M'$.

Now $\nu$ is not defined on $Z$. However, 
$Z$ is a closed set with $2$-dimensional Hausdorff measure $0$, so we
can apply the divergence theorem (see Theorem~\ref{generalized-divergence-theorem})
 to $\nu$ on $U$
 to get
\begin{equation}\label{eq:M-inequality}
\begin{aligned}
0
&=
\int_{\tilde U}\Div\nu\,dV
\\
&= 
\int_M \nu\cdot n\,dA + \int_\Sigma \nu\cdot n\,dA.
\end{aligned}
\end{equation} 
Likewise, applying the divergence theorem to $\nu$ on $\tilde U'$ gives
\begin{equation}\label{eq:M'-inequality}
0 = \int_{M'}\nu\cdot n'\,dA + \int_{\Sigma} \nu\cdot n'\,dA.
\end{equation}
Since $n=n'$ on $\Sigma$, 
combining~\eqref{eq:M-inequality} and~\eqref{eq:M'-inequality} gives:
\[
 \int_{M}\nu\cdot n\,dA = \int_{M'}\nu\cdot n'\,dA.
\]
The left side equals the area of $M$ since, by hypothesis, $\nu\equiv n$ on $M$.
Thus
\begin{equation}\label{eq:inequality}
\area(M) = \int_{M'}\nu\cdot n'\,dA \le \area(M'),
\end{equation}
with equality if and only if $\nu\equiv n'$, i.e., if and only if $M'$ is also a leaf of $\LL'$.
This proves that $M$ is area-minimizing, and it also proves the last assertion
(``furthermore\dots") of the theorem.

{\bf Case 2}: The general case.
Let $\BB_1\subset \BB_2\subset \dots$ be an exhaustion of $\BB$ by 
open balls centered at the origin such that 
for each $i$, $\partial \BB_i$ is transverse to $M$.
Then (by Case 1), $M\cap \BB_i$ is area minimizing in $\BB_i$ (i.e., it 
has area less than or equal to the area of any other surface in $\BB_i$ with the same boundary.)
Thus (by definition), $M$ is area minimizing in $\BB$.
\end{proof}

We give two simple applications of Theorem~\ref{minimizing-theorem}
that will be used in the next section.  For these corollaries, we assume that
the metric extends smoothly to $\overline{\BB}$.

\begin{corollary}\label{minimizing-disks-corollary}
Suppose $\Ll'$ contains two disks $D$ and $D'$ such that 
 either $\nu$ points out of the  region $\Omega$ between $D$ and $D'$ on $D\cup D'$,  
or it points into that region on $D\cup D'$.
Then the two disks are area minimizing as an integral current.
If there is an area-minimizing annulus with the same boundary as the disks, then
it must also be a leaf of $\Ll'$.
\end{corollary}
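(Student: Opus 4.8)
The plan is to deduce Corollary~\ref{minimizing-disks-corollary} as a direct application of Theorem~\ref{minimizing-theorem}, by building a suitable regular open set $U$ out of the two disks $D$ and $D'$ together with a piece of $\partial \BB$. First I would treat the case where $\nu$ points out of the region $\Omega$ between $D$ and $D'$. The two disks each have boundary on $\partial \BB$, so $\overline{\Omega}$ is a closed region in $\overline{\BB}$ whose boundary decomposes as $D\cup D'$ together with a subset $\Sigma$ of $\partial \BB$. Taking $U=\Omega$ (which is regular, being the interior of its closure since $D$ and $D'$ are smooth leaves meeting $\partial\BB$ transversally), we have $(\partial U)\cap \BB = D\cup D'$, and by hypothesis the orientation normal $\nu$ points out of $U$ along $D\cup D'$. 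Theorem~\ref{minimizing-theorem} then immediately gives that $M:=D\cup D'$ is area-minimizing, and the ``furthermore'' clause of that theorem gives that any other area-minimizing surface $M'$ with $\partial M'=\partial M$ (as oriented surfaces in $\overline{\BB}$) is also a union of oriented leaves of $\Ll'$; in particular an area-minimizing annulus with the same boundary, if one exists, must be such a leaf.

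In the case where $\nu$ points \emph{into} $\Omega$ along $D\cup D'$, I would instead take $U$ to be the complement (within $\BB$) of $\overline{\Omega}$, or equivalently reverse the orientation; either way $\nu$ now points out of the chosen $U$ along its boundary leaves, and Theorem~\ref{minimizing-theorem} applies verbatim. To pass from ``area-minimizing surface'' to ``area-minimizing integral current'' as stated, I would note that $D\cup D'$ is an embedded minimal surface that minimizes area among smooth competitors with the given boundary, hence (by the standard lower-semicontinuity and compactness of integral currents, plus Theorem~\ref{minimizing-theorem} applied after exhaustion as in its Case~2) it minimizes mass in its homology class among integral currents with that boundary. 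Alternatively, one can simply invoke that an area-minimizing smooth surface with smooth boundary is automatically mass-minimizing as a current, since the current it carries has the least mass among all integral currents with the same boundary — this is exactly the content of the minimization statement in Theorem~\ref{minimizing-theorem} phrased currentwise.

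The only genuinely delicate point is the regularity and transversality needed to form $U$: one must know that $D$ and $D'$ meet $\partial\BB$ cleanly enough that $\overline{\Omega}$ is a region with piecewise-smooth boundary and that $\Omega$ is regular in the sense of Theorem~\ref{minimizing-theorem}. But since $D,D'$ are rotationally invariant leaves of a minimal foliation and the metric extends smoothly to $\overline{\BB}$, this follows from Proposition~\ref{additional-properties-proposition} (each such leaf closes up to a smoothly embedded closed disk), and $\Omega$ is then a regular open set because its boundary is a union of finitely many smooth pieces. I expect this topological bookkeeping to be the main (though minor) obstacle; the substantive content is entirely carried by Theorem~\ref{minimizing-theorem}, and the corollary is little more than the observation that two disks sharing the ``outward $\nu$'' property enclose precisely such a region $U$.
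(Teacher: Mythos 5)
Your proposal is correct and is essentially identical to the paper's own (very short) proof: the paper also sets $U=\Omega$ when $\nu$ points out of $\Omega$ and $U=\BB\setminus\overline{\Omega}$ when it points in, and then applies Theorem~\ref{minimizing-theorem}, including its ``furthermore'' clause for the annulus statement. The additional remarks you make about regularity of $\Omega$ and the integral-current phrasing are reasonable elaborations but not a different route.
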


\begin{proof}
If $\nu$ on $D\cup D'$ points out of $\Omega$,  let $U=\Omega$.
If it points into $\Omega$, let $U = \BB\setminus \overline{\Omega}$.
Now apply Theorem~\ref{minimizing-theorem}.
\end{proof}

\begin{corollary}\label{minimizing-annulus-corollary}
Suppose that $\Ll'$ contains an annulus $M$.  Then $M$ is area
minimizing as an integral current.  If $\partial M$ bounds another
area-minimizing surface, then it must also be a leaf or union of leaves of $\Ll'$. 
\end{corollary}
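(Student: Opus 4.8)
The plan is to follow the strategy of Corollary~\ref{minimizing-disks-corollary}: produce a regular open set $U\subset\BB$ whose boundary inside $\BB$ is exactly $M$ and along which the orientation normal $\nu$ points out of $U$, and then invoke Theorem~\ref{minimizing-theorem}. The first and only nontrivial step is to observe that the rotationally invariant annulus $M\in\Ll'$ separates $\BB$. Indeed, by Proposition~\ref{additional-properties-proposition} the annulus $M$ is properly embedded in all of $\BB$ (not merely in $\BB\setminus Z$), and it is two-sided since it carries the orientation normal $\nu$ as a leaf of the oriented lamination $\LL'$; since $\BB$ is a ball, any properly embedded two-sided surface in it separates it (by Alexander duality, say). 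Concretely, one may also see this by passing to the profile half-disk $\Dd=\{(r,0,z)\in\BB:r>0\}$, where $M$ corresponds to a properly embedded arc $\sigma$ (using Corollary~\ref{properness-corollary} and Proposition~\ref{additional-properties-proposition}); this arc separates the simply connected region $\Dd$, and rotating back, $M$ separates $\BB\setminus Z$, with the interval $Z\cap\BB$ lying in the closure of exactly one of the two pieces.

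Thus $\BB\setminus M$ has exactly two components $\Omega_1,\Omega_2$, each a regular open set, and since $M$ is connected we have $(\partial\Omega_i)\cap\BB=M$ for $i=1,2$. The normal $\nu$ points consistently into one of the two components and out of the other; let $U$ be the component it points out of. Then $U$ is precisely the kind of set to which Theorem~\ref{minimizing-theorem} applies, with $M=(\partial U)\cap\BB$. That theorem gives at once that $M$ is area minimizing, and --- using the standing assumption for these corollaries that the metric extends smoothly to $\overline\BB$ --- that any area-minimizing surface $M'$ with $\partial M'=\partial M$ (as oriented boundaries) is made up of oriented leaves of $\Ll'$. This is exactly the asserted conclusion that such an $M'$ is a leaf or union of leaves of $\Ll'$. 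One minor point worth recording is that an area-minimizing integral current $M'$ with the smooth boundary $\partial M$ is automatically a smoothly embedded surface, by interior and boundary regularity for two-dimensional area-minimizers in a three-manifold, so the ``furthermore'' clause of Theorem~\ref{minimizing-theorem} genuinely applies to it.

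I do not anticipate a real obstacle here. Once Theorem~\ref{minimizing-theorem} is available, the only content in this corollary is the topological statement that $M$ separates $\BB$, which itself depends entirely on $M$ being properly embedded in all of $\BB$ --- in particular along the axis $Z$ --- a fact already supplied by Proposition~\ref{additional-properties-proposition}. The one thing to be careful about is confirming that $M$ is genuinely two-sided and properly embedded in $\BB$, so that ``the component of $\BB\setminus M$ out of which $\nu$ points'' is unambiguous; after that, Theorem~\ref{minimizing-theorem} does all the work.
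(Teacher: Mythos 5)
Your proposal is correct and follows essentially the same route as the paper: the paper's proof is exactly the observation that $M$ divides $\BB$ into two components, the choice of $U$ as the component out of which $\nu$ points, and an appeal to Theorem~\ref{minimizing-theorem}. Your additional care in justifying the separation via properness (Proposition~\ref{additional-properties-proposition}) is a reasonable elaboration of a step the paper leaves implicit.
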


\begin{proof}
Note that $M$ divides $\BB$ into two components: we let $U$ be the component
that such that $\nu$ on $M$ points out of $U$.  Now apply Theorem~\ref{minimizing-theorem}.
\end{proof}

\begin{remark}
We conjecture the following partial converse to Theorem~\ref{minimizing-theorem}.
 Suppose that one has a  finite collection  of area-minimizing, rotationally invariant minimal surfaces. Let $\MM$ be the augmentation of this collection to include all
 area-minimizing, rotationally invariant minimal surfaces with the same boundary. Then
$\MM$ can be realized as the rotationally invariant leaves $\Ll '$ of a lamination $\Ll$ that is a limit lamination of a sequence of spanning minimal $\theta$-graphs in $\BB\setminus Z$. 
\end{remark}

\section{Specifying the rotationally invariant leaves \\ of a limit lamination}\label{SpecifyingLeaves}

In this section, we work with the open unit ball $\BB$ in $\RR^3$ and with a smooth Riemannian 
metric $g$ on $\overline{\BB}$ such that
\begin{itemize}
\item the mean curvature of $\partial \BB$ is nonzero and points into $\BB$.
\item The metric is rotationally invariant about $Z$, and also invariant
under $\mu(x,y,z)=(x,y,-z)$.
\end{itemize}
\begin{definition}For $0<a<1$,  let 
\[
     c(a)= c^+(a)\cup c^-(a),
\]
where
$c^\pm(a)$ are the circles $(\partial \BB) \cap \{z= \pm a\}$. 
We orient  $c^-(a)$  by $d\theta$ and $c^+(a)$ by $-d\theta$.
Let 
\[
     \MM(a)
\]
 be the set of rotationally invariant area-minimizing surfaces bounded by $c(a)$.
\end{definition}

The hypotheses imply that $\MM(a)$ is nonempty for every $a\in (0,1)$.
Each surface in $\MM(a)$ (indeed, any rotationally invariant surface bounded by $c(a)$)
is either a pair of disks or an annulus.  

If $M\in \MM(a)$, then the area of $M$ is less than the area of the
annular component of $\partial \BB\setminus c(a)$. 
It follows that if $a\in (0,1)$ is close to $0$, then $M$ is an annulus.
(By the strict mean convexity of $\overline{\BB}$, $M\setminus\partial M\subset \BB$.)

Likewise,  the area of a surface $M\in\MM(a)$ is less than the area of the union
of the two simply connected components of $\partial \BB\setminus c(a)$.
In particular, if $a\in (0,1)$ is close to $1$, then the area of $M$ is nearly $0$.
It follows that if $a$ is close to $1$, then $M$ is a pair of disks.
(For if $a$ is close to $1$, then any minimal annulus bounded by $c(a)$ would contain
points from far $c(a)$, and thus by monotonicity would have area bounded away
from $0$.)

By a standard cut-and-paste argument, the surfaces in $\MM(a)$ are disjoint
 from each other, except at their common boundary. By similar reasoning,  if $a\ne a'$, the surfaces
in $\MM(a)$ are disjoint from the surfaces in $\MM(a')$.   Thus the collection of surfaces $\MM(a)$, $0<a<1$,
forms a minimal lamination of $\BB$.   Figure~\ref{AreaMinimizingCatenoidsAndDisks}  shows that lamination for the Euclidean metric.

Note that if $0<a<b<1$ and if $\MM(b)$ contains an annulus, then $\MM(a)$ contains {\em only}
annuli.  For otherwise $\MM(a)$ would contain a pair of disks, and those disks would intersect
the annulus in $\MM(b)$, which is impossible.

 Consequently, there is an $a_{\rm crit} \in (0,1)$ such that
\begin{enumerate}
\item\label{disks-item} if $a_{\rm crit}<a<1$, then $\MM(a)$ contains at least one pair
of disks, but no annuli.
\item\label{annuli-item} if $0<a<a_{\rm crit}$, then $\MM(a)$ contains at least one annulus,
but no pairs of disks. 
\item\label{both-item} $\MM(a_{\rm crit})$ contains at least one pair of disks, and it contains at least
one annulus.
\end{enumerate}
(Note that~\eqref{both-item} follows from~\eqref{disks-item} and~\eqref{annuli-item},
since the limit of area-minimizing annuli is also an area-minimizing annulus, and similarly
for pairs of disks.)

For the Euclidean metric, for each $a\le a_{\rm crit}$, $\MM(a)$ contains exactly one minimal annulus,
and for each $a\ge a_{\rm crit}$, $\MM(a)$ contains exactly one pair of minimal disks.
But for general metrics, a given $\MM(a)$ might contains multiple minimal annuli and/or multiple pairs of disks.

\begin{definition} If $T$ is a relatively closed subset of $(0,1)$, 
let $c(T)$ be the collection of circles in $\partial \BB$ given by
\[ 
c(T) =\bigcup_{a\in T}  c(a),
\]
  and let $\MM(T)$ be the lamination of $\BB$ given by
\[
 \MM(T)= \bigcup_{a\in T} \MM(a).
\]
We let 
\[
\MM=\MM(0,1).
\]
\end{definition}

\begin{theorem}\label{realizing-M(T)}
Consider a smooth Riemannian metric on $\overline{\BB}$ such that
\begin{enumerate}
\item the mean curvature of $\partial {B}$ is nonzero and points into $\BB$.
\item the metric is invariant under $(x,y,z)\mapsto (x,y,-z)$ and under rotations about $Z$.
\end{enumerate}
Let $T$ be a relatively closed subset of $(0,1)$.
Then there exists a sequence of spanning minimal $\theta$-graphs in $\BB\setminus Z$
that converge to a limit lamination $\LL$ whose rotationally invariant 
leaves are given by $\MM(T)$.
\end{theorem}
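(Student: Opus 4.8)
The plan is to realize $\MM(T)$ as the rotationally invariant leaves of a limit lamination by choosing the boundary curves $\gamma_n$ of the spanning minimal $\theta$-graphs carefully, using the existence and uniqueness result of Theorem~\ref{UniqueEmbeddedDisk} to produce each $D_n$, and then applying Theorem~\ref{laminationproperties} together with the boundary-convergence results of Section~\ref{SmoothConvergenceBoundary} to control the limit. First I would fix, for each $a\in(0,1)$, the pair of horizontal circles $c(a)=c^+(a)\cup c^-(a)$ in $\partial\BB$, and note that a spanning $\theta$-graph whose boundary curve $\gamma$ (joining $p^+$ to $p^-$) passes through the points of $c(a)$ at angle $\theta=0$ will, by the symmetry $\mu$ and the uniqueness in Theorem~\ref{UniqueEmbeddedDisk}, give a disk $D$ whose limit behavior near $z=\pm a$ can be forced to match the leaves of $\MM(a)$. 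The idea is that by making $\gamma_n$ oscillate in $\theta$ more and more rapidly as $n\to\infty$ on the part of $\partial\BB$ corresponding to $a\in T$ (and keeping $\gamma_n$ tame, i.e.\ with bounded curvature and bounded first derivative of curvature, away from a shrinking neighborhood of any ``accumulation'' circles), the disks $D_n$ will be forced to wrap around and accumulate, in the limit, precisely on the surfaces of $\MM(T)$; away from $c(T)$ the curves $\gamma_n$ converge smoothly and Theorem~\ref{smoothboundarybehavior} and Corollary~\ref{smoothboundarycorollary} show the limit leaves there are the non-limit $\theta$-graph leaves foliating the complementary regions.

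The key steps, in order, are: (1) For each $n$, construct the boundary curve $\gamma_n\subset\partial\BB$ explicitly as a $\theta$-graph over the $z$-interval $(-1,1)$, i.e.\ $\gamma_n=\{(\,\cos\theta_n(z),\sin\theta_n(z),z)\colon$ suitably rescaled to $\partial\BB\}$, with $\theta_n\equiv 0$ on a neighborhood of each $z=\pm a$ with $a\in T$, and with $\theta_n$ sweeping through a large total angle (growing with $n$) on the intervals between consecutive points of $T$ — arranged so that $\gamma_n\cup\rho_Z\gamma_n$ is smooth and so that, away from $c(T)$, the $\gamma_n$ converge smoothly to a lamination $\GG$ of $\partial\BB\setminus\ppm$ whose rotationally invariant leaves are exactly $c(T)$. (2) Apply Theorem~\ref{UniqueEmbeddedDisk} to get, for each $n$, the unique area-minimizing spanning minimal $\theta$-graph $D_n$ with $\partial D_n=\gamma_n\cup(\BB\cap Z)$; by Corollary~\ref{existence-corollary} each $D_n$ is area-minimizing. (3) Pass to a subsequence and apply Theorem~\ref{laminationproperties} to obtain the limit lamination $\LL$ with its rotationally invariant sublamination $\LL'$. (4) Show $\LL'\subseteq\MM(T)$: each rotationally invariant leaf $\Sigma$ of $\LL$ is, by Proposition~\ref{additional-properties-proposition}, a disk or annulus; by Corollary~\ref{minimizing-disks-corollary} or Corollary~\ref{minimizing-annulus-corollary} it is area-minimizing; by Corollary~\ref{smoothboundarycorollary} its boundary is a rotationally invariant leaf of $\GG$, hence lies in $c(T)$, so $\overline\Sigma\in\MM(a)$ for some $a\in T$. (5) Show $\MM(T)\subseteq\LL'$: given $a\in T$ and $M\in\MM(a)$, use the rapid oscillation of $\gamma_n$ near $c(a)$ to produce points of $D_n$ accumulating on $M$ — concretely, the disks $D_n$ must cross every rotationally invariant circle through a point of $M$, and the angle function $\theta_{D_n}$ restricted to such a circle's preimage is forced (by the boundary values and the maximum principle governing $\theta_{D_n}$, as in the proof of Theorem~\ref{UniqueEmbeddedDisk}) to take on a spread of values growing with $n$, so in the limit the lamination must contain a leaf through each such point, and that leaf is rotationally invariant (being squeezed between rotations of itself) and area-minimizing with boundary $c(a)$, hence equals $M$ by the disjointness of the surfaces in $\MM(a)$ together with the ``furthermore'' clause of Theorem~\ref{minimizing-theorem}.

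The main obstacle I expect is Step~(5) — the lower bound, i.e.\ forcing each prescribed area-minimizer $M\in\MM(a)$ to actually appear in the limit, rather than merely ensuring the limit leaves are drawn from $\MM(T)$. Controlling where the disks $D_n$ accumulate requires quantitative control of the angle function $\theta_{D_n}$: one must show that prescribing rapid winding of $\gamma_n$ on the portion of $\partial\BB$ near $c(a)$ propagates into the interior and forces $D_n$ to sweep past $M$. The delicate point is that $D_n$ is determined by an area-minimization problem, not a boundary-value problem for $\theta$ directly, so the argument needs the barrier/maximum-principle structure for $\theta_{D_n}$ established in Theorem~\ref{UniqueEmbeddedDisk} (that $\theta_{D_n}(q)$ on a fixed rotationally invariant circle lies between the extreme values of $\theta_{\gamma_n}$ on that circle's $z$-level and the neighboring structure), combined with a careful choice of $\gamma_n$ ensuring that the winding is "seen" at the $z$-levels occupied by $M$. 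A secondary difficulty is verifying that $\gamma_n\cup\rho_Z\gamma_n$ can be made smooth (vanishing of curvature and all even derivatives of curvature at $p^\pm$, per the remark after Theorem~\ref{UniqueEmbeddedDisk}) while still achieving the required oscillation; this is a matter of interpolating the construction with appropriate flatness near the poles, and should be routine once the oscillation profile is specified.
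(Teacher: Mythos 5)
Your overall architecture matches the paper's: produce $\mu$-invariant boundary curves, apply Theorem~\ref{UniqueEmbeddedDisk} to get the disks $D_n$, pass to a limit via Theorem~\ref{laminationproperties}, use the calibration results of Section~\ref{necessaryconditions} to show the rotationally invariant leaves are area-minimizing, and use the boundary convergence of Section~\ref{SmoothConvergenceBoundary} together with the ``furthermore'' clause of Theorem~\ref{minimizing-theorem} to get both inclusions $\LL'\subseteq\MM(T)$ and $\MM(T)\subseteq\LL'$. However, your Step~(1) contains a genuine error that breaks the argument as written: the winding of $\gamma_n$ is placed in the wrong location. You prescribe $\theta_n\equiv 0$ on a neighborhood of each level $z=\pm a$, $a\in T$, with the large total winding occurring on the intervals \emph{between} those levels. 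With that choice, near each circle $c^\pm(a)$ the curves $\gamma_n$ are all equal to a fixed meridian arc, so the limit lamination $\GG$ of the boundary curves does \emph{not} contain the circles $c(a)$ as leaves; instead, the unbounded winding in the complementary bands forces $\GG$ to contain rotationally invariant circles at levels \emph{outside} $T$. Since Theorem~\ref{smoothboundarybehavior} and Corollary~\ref{smoothboundarycorollary} tie the rotationally invariant leaves of $\LL$ precisely to the rotationally invariant leaves of $\GG$, your construction would produce limit leaves bounded by the wrong circles, and no leaf of $\LL$ would have boundary $c(a)$ for $a\in T$. Your own Step~(5) tacitly assumes the opposite construction (``the rapid oscillation of $\gamma_n$ near $c(a)$''), so the proposal is internally inconsistent. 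The fix is what the paper does: keep $d\theta_n/dz$ bounded and smoothly convergent on compact subsets away from the levels $\pm a$, $a\in T$, and let the winding concentrate and blow up precisely at those levels, so that $\gamma_n$ spirals into each $c^\pm(a)$ and the rotationally invariant leaves of $\GG$ are exactly the circles in $c(T)$.

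Two smaller points. First, once the boundary curves are chosen correctly, your worry about quantitatively propagating the winding into the interior is unnecessary: Corollary~\ref{smoothboundarycorollary} already hands you, for each $a\in T$, a rotationally invariant leaf of $\LL$ with $c^+(a)$ on its boundary, and the ``furthermore'' clause of Theorem~\ref{minimizing-theorem} then forces every surface of $\MM(a)$ to be a leaf; no barrier argument for $\theta_{D_n}$ on interior circles is needed. Second, you do not address the case $0\in\overline{T}$: a sequence of $\mu$-invariant $\theta$-graphs in $\partial\BB$ cannot converge smoothly near the equator to a lamination containing circles arbitrarily close to $\{z=0\}$, so there one must weaken the convergence to hold only away from $\{z=0\}$, as the paper does in its final paragraph.
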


 \begin{remark} 
 More precisely, the rotationally invariant 
 leaves of $\LL$ are 
 the annuli in $\MM(T)$ together with the disks in $\MM(T)$ with their centers removed. 
\end{remark}

\begin{proof}
\newcommand{\Gg}{\mathcal{G}}
First suppose that $0\notin \overline{T}$.
Consider the collection $\Gg$ of $\theta$-graphs $\gamma$ in $\partial \BB$
with the following properties:
\begin{itemize}
\item $\gamma$ is invariant under the reflection $\mu(x,y,z)=(x,y,-z)$.
\item $\frac{d\theta}{dz}$ is positive on $\gamma\cap\{z<0\}$
(and therefore negative on $\gamma\cap\{z>0\}$).
\end{itemize}
Then there is a sequence of curves
$\gamma_i$, $i=1,2,\dots$, in $\Gg$ converging
smoothly to a lamination $\Cc$ of $\partial \BB\setminus Z$ such that the rotationally invariant leaves
are precisely the circles in $c(T)$.

By Theorem~\ref{UniqueEmbeddedDisk} (applicable because we are assuming that $\overline B$ is mean convex),  for each $\gamma_i$ there exists
a   unique, smooth, embedded minimal $\theta$-graph $D_i$ with boundary $\gamma_i\cup  (Z\cap\overline \BB)$. Because this boundary is $\mu$-invariant, uniqueness implies that $D_i$ is also $\mu$-invariant.
By passing to a subsequence, we can assume that the $D_i$
converge smoothly to a lamination $\Ll$ of $\BB\setminus Z$. Of course $\Ll$ must be $\mu$-invariant.

To prove the theorem, we must prove that every rotationally invariant leaf of $\Ll$
is in $\MM(T)$, and, conversely, that each surface in $\MM(T)$ is a leaf of $\Ll$.
\newline\newline
\noindent
{\bf Step 1: Proof that every rotationally invariant leaf of $\Ll$ is in $\MM(T)$.}
Suppose that $L$ is a rotationally invariant leaf of $\Ll$.
Then $L$ must be a punctured disk or an annulus.

Case 1: $L$ is a punctured disk.   By Theorem~\ref{smoothboundarybehavior},
 the boundary circle 
of $L$ must be a leaf of $\Cc$, so it must be one of the two circles in $c(a)$ for
some $a\in T$.   By symmetry, $\mu(L)$ is also a leaf of $\Ll$.  
The two boundary circles of $L\cup \mu(L)$ are $c(a)$.   
 By Corollary~\ref{minimizing-disks-corollary}, $L\cup\mu(L)$ is area
minimizing.  Thus $L\cup \mu(L)\in \MM(a)$.

Case 2: $L$ is an annulus.   
By Theorem~\ref{smoothboundarybehavior}, the two boundary circles of $L$ must both be
circles in the family $c(T)$.   
Note the circles must be oppositely oriented.
Therefore one boundary circle is $c^+(a)$ for some $a\in T$, and the other is $c^-(b)$
for some $b\in T$.

We claim that $a=b$.  For otherwise, $L$ and $\mu(L)$ would be two leaves of $\Ll$
that intersect along a circle at height $0$, which is impossible.  
Thus $\partial L=c(a)$ for some $a\in T$.   
By Corollary~\ref{minimizing-annulus-corollary}, $L$ is area-minimzing.
Therefore $L\in \MM(a)$.

This completes the proof that each rotationally invariant leaf in $\Ll$ is in $\MM(a)$ for
some $a\in T$.
\newline
\newline
{\bf Step 2: Proof that every surface in $\MM(T)$ is a rotationally invariant leaf in $\Ll$.}
Suppose that $a\in T$.
By Corollary~\ref{smoothboundarycorollary}, 
 there is some rotationally invariant leaf $L$ of $\Ll$ such that $c^+(a)$ is a
component of $\partial L$.
If $L$ is an annulus, then (as we have proved above), 
$\partial L=c(a)$; in this case, let $\Sigma=L$.
If $L$ is a disk, then $\mu(L)$ is also in $\Ll$ (by $\mu$-symmetry);
In this case, we let $\Sigma=L\cup\mu(L)$.

We have shown: if $a\in T$, then $c(a)$ bounds a rotationally invariant surface $\Sigma$ consisting
of one leaf (an annulus) or two leaves (both disks) in $\Ll$.
By Corollaries~\ref{minimizing-disks-corollary} and~\ref{minimizing-annulus-corollary},
 $\Sigma$ is area minimizing, so $\Sigma\in \MM(a)$.
If $\MM(a)$ contains another surface $\Sigma'$, then $\Sigma$ together with $\Sigma'$
bound a region $\Omega$.  By Theorem~\ref{minimizing-theorem}, since $\Sigma$ is a rotationally invariant leaf (or
pair of leaves) in $\Ll$, $\Sigma'$ must also be in $\Ll$. 
Thus every surface $\Sigma'$ in $\MM(a)$ belongs to $\Ll$.
This completes the proof assuming that $0\notin\overline{T}$.

Now suppose that $0\in \overline{T}$.
In this case, no sequence $\gamma_i\in \GG$ can converge smoothly to a lamination
that includes the circles $c(T)$.
For if  $\gamma_i$ in  $\GG$  converges smoothly to a lamination
$\Cc$ of $\partial \BB\setminus\{p^+, p^-\}$, then $\Cc$ contains a leaf that crosses
the equator perpendicularly, which implies that $\Cc$ cannot contain circles arbitrarily near
the equator.  

However, even if $0\in\overline{T}$, we can find a sequence of curves $\gamma_i$ in $\GG$
that converge smoothly in 
\[
\partial \BB \setminus (\{p^+,p^-\}\cup \{z=0\}) \tag{*}
\]
to a lamination of \thetag{*} whose rotationally invariant leaves are precisely the circles in $c(T)$.
The rest of the proof is almost exactly the same as the proof when $0\notin \overline{T}$.
\end{proof}

 \begin{remark}  
Let $W$ be the open cylinder $\{(x,y,z): x^2+y^2<1\}$ with the Euclidean metric.
In~\cite{hoffman-white-sequences}, the authors prove that given
any closed subset $T$ of $Z$, there is a sequence of spanning
$\theta$-graphs in $W\setminus Z$ that converge
to a limit lamination whose rotationally invariant leaves
are precisely the disks $W\cap\{z=c\}$, $c\in T$.
\end{remark}

\section{The hyperbolic case I.  Existence of $\theta$-graphs with  prescribed boundary at infinity: Theorem~\ref{UniqueEmbeddedDisk}  in the hyperbolic case}\label{hyperbolic1}

We will  extend  the existence result, Theorem~\ref{UniqueEmbeddedDisk} of
 Section~\ref{ExistenceUniqueness}  (and Theorem~2 of \cite{hoffman-white-sequences}), to hyperbolic space $\HH ^3$.    In this section,  $\BB$ will denote the open
unit ball centered at the origin in $\RR^3$.  
We will be interested in surfaces in $\BB$ that are {\bf hyperbolically minimal}, i.e. minimal with respect to the hyperbolic (Poincar\'e) metric
\[
        ds^2 =\frac{4(dx_1^2+dx_2^2+dx_3^2)}{(1-|x|^2))^2}
\]
on $\BB$.  This metric is clearly rotationally symmetric around  any axis of the ball, 
in particular the $x_3$-axis $Z$.   Note that Theorem~\ref{UniqueEmbeddedDisk}
 does not directly apply here because the metric does not extend smoothly 
 to  $\overline{\BB}$,  and the boundary 
 (the unit sphere---at infinite distance from any point of $\BB$) 
 is not mean convex in the ordinary sense.
 
 In what follows, for any subset $S$  of $\overline{\BB}$,  the sets  $\overline{S}$ and $\partial S$
will continue to denote the closure of $S$ and the boundary of $S$ in $\overline{\BB}$ with respect the Euclidean metric.
We will refer to     $\partial S \cap \partial \BB$
as the {\bf ideal boundary} of $S$.
 We will write  $I=Z\cap \BB$ and observe that the ideal boundary of $I$ is 
 equal to $Z\cap \partial \BB=\{(0,0,\pm1)\}$.
 
\begin{theorem} \label{hyperbolic existence}
Let $\gamma$ be a smooth curve in $\partial \BB$ joining $p^-$ to $p^+$ such 
that $\gamma$ intersects each rotationally invariant curve in $\partial \BB$ exactly once,
and such that the curve $C:=\gamma\cup \rho_Z\gamma$ is smooth.
Let $\Gamma$ be the union of $\gamma$ with $Z$.
 Then  $\Gamma$ bounds a spanning hyperbolically minimal $\theta$-graph $D$ such that
 $\overline{D\cup \rho_ZD}$ is a smoothly embedded disk with boundary $C$.
 \end{theorem}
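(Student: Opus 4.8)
The plan is to reduce Theorem~\ref{hyperbolic existence} to the already-established Euclidean-type result Theorem~\ref{UniqueEmbeddedDisk} by an exhaustion argument, taking limits of minimal $\theta$-graphs in a sequence of smaller balls that are strictly mean convex in the hyperbolic metric. Concretely, for $0<R<1$ let $\BB_R$ be the Euclidean ball of radius $R$ centered at the origin; since every geodesic sphere of hyperbolic space is strictly convex, each $\partial\BB_R$ is strictly mean convex for the Poincar\'e metric, the metric is smooth on $\overline{\BB_R}$, and it is rotationally invariant and $\mu$-invariant there. I would first produce, for each $R$, a boundary curve $\gamma_R\subset\partial\BB_R$ that is a $\theta$-graph joining the two points of $Z\cap\partial\BB_R$, intersecting each rotationally invariant circle of $\partial\BB_R$ once, with $\gamma_R\cup\rho_Z\gamma_R$ smooth, and chosen so that the $\gamma_R$ "converge" in an appropriate sense to the prescribed ideal curve $\gamma$ as $R\to 1$; for instance one can take $\gamma_R$ to be a radial projection or a suitable rescaling of $\gamma$ to $\partial\BB_R$, arranged so that the corresponding angle functions converge smoothly on compact subsets of the interior.

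Next, for each $R$ apply Theorem~\ref{UniqueEmbeddedDisk} (with the hyperbolic metric on $\overline{\BB_R}$) to obtain a unique least-area, hence minimal, spanning $\theta$-graph $D_R$ in $\BB_R\setminus Z$ with boundary $\gamma_R\cup(Z\cap\overline{\BB_R})$, such that $\overline{D_R\cup\rho_ZD_R}$ is a smoothly embedded disk. By Corollary~\ref{existence-corollary} each $D_R$ is area-minimizing in $\BB_R$ and its rotational Killing field gives a nonvanishing Jacobi field, so $D_R$ is stable; therefore the curvature estimate for stable surfaces gives uniform curvature bounds on compact subsets of $\BB\setminus Z$. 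Passing to a subsequence $R\to 1$, the $D_R$ converge smoothly on compact subsets of $\BB\setminus Z$ to a minimal lamination, and I would argue (exactly as in the proof of Theorem~\ref{laminationproperties}, using Lemma~\ref{thetagraphs} and the fact that each $D_R$ meets each rotationally invariant circle exactly once) that the limit is a single spanning $\theta$-graph $D$ in $\BB\setminus Z$; the boundary behavior on $Z\cap\BB$ follows from Theorem~\ref{theta-boundary-theorem}, so $\overline{D}\cap\BB = D\cup(Z\cap\BB)$ is a smooth manifold-with-boundary.

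The main obstacle is the behavior at the ideal boundary: one must show that $\overline{D\cup\rho_ZD}$ actually attains the prescribed smooth curve $C=\gamma\cup\rho_Z\gamma$ on $\partial\BB$, and does so smoothly. The interior curvature bounds degenerate as one approaches $\partial\BB$, so I would instead use a barrier argument at infinity: for each point of $\partial\BB\setminus\{p^\pm\}$ build rotationally invariant hyperbolic minimal surfaces (or pieces of equidistant surfaces and catenoid-type surfaces) that trap $\overline{D_R\cup\rho_ZD_R}$ in a prescribed neighborhood of $C$, using the fact that $\gamma_R\to\gamma$ and monotonicity/maximum-principle comparisons. This gives $C^0$ convergence of the ideal boundary; smoothness of $\overline{D\cup\rho_ZD}$ up to $\partial\BB$ then follows from boundary regularity for the asymptotic Plateau problem in $\HH^3$ with smooth ideal data (e.g.\ the Anderson/Hardt--Lin--type regularity for area-minimizing surfaces with smooth boundary at infinity), after which smoothness across $p^\pm$ is handled exactly as in the proof of Theorem~\ref{UniqueEmbeddedDisk} via the tangent-cone analysis and Allard's boundary regularity theorem. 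Uniqueness among embedded minimal $\theta$-graphs with the given asymptotic boundary would then follow from the same rotation-and-maximum-principle argument as in Theorem~\ref{UniqueEmbeddedDisk}, applied on each $\BB_R$ and passed to the limit.
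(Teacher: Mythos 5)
Your overall strategy is exactly the paper's: exhaust $\BB$ by smaller balls on which the Poincar\'e metric is smooth and strictly mean convex, apply Theorem~\ref{UniqueEmbeddedDisk} there, pass to a smooth limit on compact subsets of $\BB\setminus Z$, use totally geodesic barriers to confine the ideal boundary to $\gamma\cup\rho_Z\gamma$, and invoke Hardt--Lin--type regularity at infinity. Two remarks on the logic. First, a minor ordering point: the conclusion that the limit lamination is a \emph{single} spanning $\theta$-graph is not automatic from the argument of Theorem~\ref{laminationproperties}; a priori the limit could contain rotationally invariant leaves (catenoids or disks), and these are ruled out only \emph{after} the barrier argument shows that the ideal boundary of the lamination is contained in $\Gamma$, which contains no circles. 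So the barrier step must come before, not after, the single-leaf conclusion.

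The genuine gap is in the last step. Hardt--Lin's regularity theorem is stated for \emph{area-minimizing} currents, and the doubled surface $\overline{D\cup\rho_Z D}$ is not known to be area-minimizing: each half $D_R$ minimizes area for the boundary $\gamma_R\cup(Z\cap\overline{\BB_R})$, but the union of two least-area halves need not minimize for the boundary $\gamma_R\cup\rho_Z\gamma_R$ once the segment of $Z$ becomes interior. The paper gets around this by proving a uniform bound on the norm of the second fundamental form of $D$ all the way out to the ideal boundary (its Claim~2), and then observing that the Hardt--Lin proof only uses the minimizing hypothesis through their Lemma~2.1, which also follows from such a curvature bound. That curvature bound is itself nontrivial: it is proved by contradiction, applying M\"obius transformations that move a putative sequence of bad points to the equatorial plane and showing the renormalized surfaces converge to a lamination with no limit leaves, hence with locally bounded curvature. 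Your proposal contains neither the verification of an area-minimizing property for the doubled surface nor a substitute curvature estimate near $\partial\BB$, so the appeal to ``Anderson/Hardt--Lin--type regularity'' is not yet justified; this is the one substantive missing idea.
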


\begin{proof}
Let $\Bb_n$ be a sequence of nested open balls centered at the origin such that $\overline{\Bb_n}\subset \BB$ and such
that $\bigcup_n\Bb_n=\BB$.
Let $\Gamma_n$ be the image of $\Gamma$ under the Euclidean homothety that takes $\BB$ to $\Bb_n$.

By Theorem~\ref{UniqueEmbeddedDisk}, the curve $\Gamma_n$ bounds a unique
spanning $\theta$-graph $D_n$ that is minimal with respect to the Poincar\'e metric.
Its rotated images about $Z$ foliate $\Bb_n\setminus Z$.

By Theorem~\ref{laminationproperties},  we can assume (by passing to a subsequence) that the $D_n$ converge smoothly on compact subsets of $\BB\setminus Z$
to a minimal lamination $\Ll$ of $\BB\setminus Z$. (Theorem~\ref{laminationproperties} assumes that all the $D_n$
lie in the same domain $\BB\setminus Z$.
 Here we have expanding domains $\Bb_n\setminus Z$. The proof is the same, requiring only the choice of subsequences at each stage.)

Let $\partial \Ll = \overline{\Ll}\setminus \Ll$.
Note that $\partial \Ll$ is a subset of $(\partial \BB)\cup I$. 

\vspace{0.15in}

\noindent {\bf Claim 1.} {\em   $\partial \Ll$ is contained in $\Gamma$.}

\begin{proof} Every point in $p\in(\partial \BB)\setminus \Gamma$ is contained in an open Euclidean ball $U$
that is disjoint from $\Gamma$ and that meets $\partial \BB$ orthogonally.  Note that $U$ is disjoint from each 
$\Gamma_n$.  Note also that $U\cap \BB$ can be foliated by
nested totally geodesic surfaces (the boundaries of smaller balls) that meet $\partial \BB$ orthogonally and converge in the Euclidean metric to $p$. 
By the the maximum principle,  $U$ is disjoint from $D_n$.    This proves the claim.
\end{proof}

Because $\Gamma$ contains no circles it follows from Claim~1 that  $\Ll$ contains no leaves that are rotationally invariant. We now use
the properties of $\Ll$ that were proved in Theorem~\ref{laminationproperties}:
 \begin{itemize}
 \item By Property~\eqref{main-theorem-item3}, $\Ll$ contains no limit leaves;
 \item By Property~\eqref{main-theorem-blowup-item}, 
 there is no curvature blowup in  $\BB\setminus I$ and, by 
 Property~\eqref{main-theorem-puncture-item}, there is no curvature blowup on  $I$; 
 \item Consequently, by Property~\eqref{main-theorem-schwarz-item}, 
 there is  a single leaf  $D$ of $\Ll$ that contains $I$ in its closure,
  and that leaf is a spanning $\theta$-graph.
 \end{itemize}
 It follows from Property~\eqref{main-theorem-item4} that  $D$ is the only leaf of $\Ll$.
 
Because there is no curvature blowup on $I$, the local boundedness of the curvatures of the $D_n$ implies that $D\cup I$ is a smooth manifold with boundary.

\vspace{0.15in}

\noindent {\bf Claim 2.} {\em  Let $B(D,p)$ denote the norm of the second fundamental form of $D$ with respect to the hyperbolic metric.
Then $B(D,\cdot)$ is bounded above on $D$.}

\begin{proof}[Proof of Claim~2] 
If  Claim~2 is false,  then there is a sequence of points $p_n\in D$ such that $B(D,p_n)\to\infty$.
By passing to a subsequence, we can assume that $p_n$ converges (in the Euclidean sense)
to a point $p\in \overline{\BB}$.   
Since $D\cup I$ is a smooth manifold with boundary, $p\in \partial \BB$.
Since the $R_\theta D$ foliate $\BB\setminus Z$, $D$ is stable, and stability yields the following estimate:
\begin{equation}\label{stability}
    B(D,p_n)\, \min \{ 1, \dist(p_n, \partial D)\} \le c,
\end{equation}
where $c$ is a constant independent of $n$, and $\dist$ denotes distance in the hyperbolic metric. (See \cite{Schoen}.)
Since the ideal boundary $\partial \BB$ of $\BB$ is infinitely far from $p_n$ (in hyperbolic distance),
$$\dist(p_n, \partial D)=\dist(p_n, (\partial D)\cap \BB)=\dist(p_n, I),$$
so \eqref{stability} becomes
\[
     B(D,p_n)\, \min \{ 1, \dist(p_n, I)\} \le c.
\]
Since  we are assuming that $B(D, p_n)\to\infty$, this impies that $\dist(p_n, I)\to 0$. 
Hence $p$ is one of the points of $I$. However, we have established that $D\cup I$ is a smooth manifold with boundary, so $p$ must lie in $\partial I$. That is, $p=(0,0 ,1)$ or $p=(0,0,-1)$.  Passing to a subsequence, we may assume without loss of generality that  that $p=(0,0,1)$,  the North Pole, and $p_n\rightarrow p$.

Let $f_n:\overline{\BB}\to\overline{\BB}$ be a Mobius transformation (i.e., a hyperbolic isometry) with the property that $p_n' := f_n(p_n)$ lies on the plane $z=0$ and that $f_n(Z)=Z$.  Let $D_n$ and $\gamma_n$ be the images
of $D$ and $\gamma$, respectively,  under $f_n$. 
  Since 
\[
        \dist(p_n', Z)=\dist(f_n(p_n), f_n(Z))=\dist(p_n,Z),
\]
and $p'_n$ lies on the plane $\{z=0\}$
the $p_n'$ converge to 
the origin $O=(0,0,0)$.
Note that $\gamma_n$ converges smoothly 
(except at the South Pole) to a great semicircle $S$
joining the North and South Poles.  

 By passing to a subsequence, we may assume that the $D_n$ converge
to a minimal lamination $\Ll'$ of $\BB\setminus Z$.  As before, the ideal boundary of the lamination is contained
in $S$ and therefore does not contain any horizontal circles.   Thus $\Ll'$ does not contain any rotationally symmetric
leaves. That is, there are no limit leaves. Thus the curvatures of the $D_n$ are uniformly bounded
on compact subsets of $\BB$, contradicting the fact that $B(D_n, p_n')\to \infty$ and that $p_n' \to O$.
This completes the proof the claim.
\end{proof}

We now complete the proof of Theorem~\ref{hyperbolic existence}. Let $$\mathcal{D}= D \cup \rho_ZD \cup I.$$  
Then $\mathcal{D}$ is an embedded minimal disk whose ideal boundary
is the smooth, simple closed curve  $\gamma \cup \rho_Z (\gamma)$,
 and whose principal curvatures are uniformly bounded.
 It follows from the  work of Hardt and Lin \cite{HardtLin}  that $\overline{\mathcal{D}}$ is a $C^1$-manifold with boundary and must meet the ideal boundary orthogonally. Based on this work,  Tonegawa \cite{tonegawa} was able to prove that in fact $\overline{\mathcal{D}}$ is a smooth manifold with boundary.
(This assertion requires some explanation. First, Hardt and Lin  assume  that $\mathcal{D}$ is a hyperbolic-area-minimizing rectifiable current.  Their proof works equally well if instead one
assumes that $\mathcal{D}$ is a smooth minimal surface whose principal curvatures  are bounded, and we have established these bounds in Claim~2.  Such boundedness
easily implies Lemma~2.1 of \cite{HardtLin} , which  establishes the essential property of surfaces  necessary for their proof of their result. Second, the main theorem of \cite{HardtLin} states that near the boundary, $\overline{\mathcal{D}}$ is 
a union of sheets,  each of which is a smooth manifold with boundary.  But in our case there is clearly only one sheet since
$\mathcal{D}$ intersects each horizontal circle centered on $Z$ exactly once.)
\end{proof}

\begin{proposition} 
\label{hyperbolic-uniqueness}
Let $D$ be {\color{black} a} spanning hyperbolically minimal $\theta$-graph as in  
Theorem~\ref{hyperbolic existence}.
Let $M$ be a  hyperbolically minimal surface embedded in  $\BB\setminus Z$ such that
$\partial M=\partial D$ and such that 
$\overline{M\cup\rho_Z M}$ is a $C^1$ manifold with boundary.
Then $M=D$.
\end{proposition}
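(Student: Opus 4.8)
The plan is to imitate the uniqueness argument of Theorem~\ref{UniqueEmbeddedDisk}, comparing the angle functions of $M$ and $D$ via the maximum principle, but now being careful at the ideal boundary since the metric does not extend across $\partial\BB$. First I would observe that, because $M$ is embedded in $\BB\setminus Z$ and $\overline{M\cup\rho_ZM}$ is a $C^1$ manifold with boundary whose boundary $C=\gamma\cup\rho_Z\gamma$ is a smooth simple closed curve in $\partial\BB$, every closed curve in $M$ has winding number $0$ about $Z$: a small normal pushoff of such a curve is disjoint from $M$, hence has zero linking number with $\Gamma$, hence zero winding number about $Z$, and it is homotopic in $\BB\setminus Z$ to the original curve. (Alternatively, $M$ is connected and has a connected ideal boundary $\gamma$ together with $I$, so $M$ lifts.) Therefore $M$ lifts to the universal cover $V\times\RR$ of $\BB\setminus Z$, giving a continuous angle function $\theta_M:\overline M\to\RR$ with $p=(r(p)\cos\theta_M(p), r(p)\sin\theta_M(p), z)$, and likewise we have $\theta_D$ on $\overline D$; after adding a multiple of $2\pi$ we may assume $\theta_M=\theta_D$ on $\Gamma$.

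Next I would define the comparison function: for $p\in\overline M$, let $q(p)$ be the unique point where the spanning $\theta$-graph $D$ meets the rotationally invariant circle through $p$ (using circles of radius $0$ on $Z$, so $q(p)=p$ there), and set $\phi(p)=\theta_D(q(p))-\theta_M(p)$. Then $\phi$ is continuous on $\overline M$ and vanishes on $\Gamma=\partial M$, since $\theta_M=\theta_D$ there and $q$ is the identity on $\Gamma$. The goal is to show $\phi\equiv 0$, which forces $M$ and $D$ to have the same angle function over the same circles, hence $M=D$. If $\phi\not\equiv 0$, then $|\phi|$ would attain a positive maximum $m$ at some point $p_0$; at $p_0$ the surface $M$ and the rotated copy $\rho_{-\phi(p_0)}D$ (rotation about $Z$ by angle $-\phi(p_0)$, which is again hyperbolically minimal) are tangent, with $M$ lying locally on one side, and this violates the interior strong maximum principle (if $p_0\in M\setminus Z$) or the boundary strong maximum principle (if $p_0\in Z\cap\BB$, where both surfaces meet $Z$ orthogonally by the smoothness in Theorem~\ref{hyperbolic existence} and the $C^1$ hypothesis on $M$). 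The subtlety is that a priori the maximum of $|\phi|$ might only be approached as $p\to\partial\BB$.

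The main obstacle is thus to rule out escape of the maximum to the ideal boundary $\partial\BB$. Here I would use the $C^1$ hypothesis: since $\overline{M\cup\rho_ZM}$ and $\overline{D\cup\rho_ZD}$ are both $C^1$ manifolds with boundary meeting $\partial\BB$ along the \emph{same} smooth curve $C$ (and, by the Hardt--Lin/Tonegawa regularity invoked in the proof of Theorem~\ref{hyperbolic existence}, meeting $\partial\BB$ orthogonally there), the angle functions $\theta_M$ and $\theta_D$ extend continuously to $C$ with $\theta_M=\theta_D$ on $C$. Consequently $\phi$ extends continuously to $\overline{M}\cup C$ with $\phi=0$ on $C$, so $|\phi|$ actually attains its maximum on the \emph{compact} set $\overline M\cup C$ (closure in $\overline\BB$); if that maximum is positive it is attained at an interior point of $M$ or a point of $Z\cap\BB$, and we get the contradiction above. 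Hence $\phi\equiv 0$ and $M=D$. I should double-check that the $C^1$ control is genuinely enough to conclude $\theta_M$ extends continuously up to $C$ and agrees with $\theta_D$ there — this is where the orthogonal-contact statement, which controls the behavior of $r(p)$ and of the position of $M$ as $p\to C$, does the work — but modulo that point the argument is a routine maximum-principle comparison.
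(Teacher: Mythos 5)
Your proposal is correct and follows essentially the same route as the paper, which simply states that the proof is identical to the uniqueness argument in Theorem~\ref{UniqueEmbeddedDisk}: lift to get angle functions, normalize them to agree on the boundary, and apply the strong (interior and boundary) maximum principle to $\phi=\theta_D\circ q-\theta_M$. Your added care about the maximum escaping to the ideal boundary, handled via the $C^1$ hypothesis forcing $\phi$ to extend continuously to the compact set $\overline M$ in $\overline\BB$ and vanish on $\gamma$, is exactly the point that makes the Euclidean argument carry over.
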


The proof of Proposition~\ref{hyperbolic-uniqueness} is exactly
the same as the proof of the uniqueness assertion in Theorem~\ref{UniqueEmbeddedDisk}.


\section{The Hyperbolic case II. Necessary Conditions for a lamination to appear as  limit leaves of a limit lamination: Section~\ref{necessaryconditions} in  the  hyperbolic case.}\label{necessaryconditionshyperbolic}
\stepcounter{theorem}

\newcommand{\Cyl}{\operatorname{Cyl}}

The statements and proofs in Section~\ref{necessaryconditions} involved comparing areas of rotationally invariant
 surfaces in $\BB$ with boundaries in $\partial \BB$. 
If we endow $\BB$ with the Poincar\'e metric, then the areas of such surfaces are
infinite, so comparing them becomes problematic.
  We get around this problem by working with suitable compact exhaustions
of the surfaces.  
Let $\Cyl(s)$ denote the points in $\BB$ that are at (hyperbolic) distance at most $s$
from $Z\cap \BB$. Inspired by \cite{collin-rosenberg-harmonic}, where horocycles are used to cut off ends of divergent geodesics in order to define a Jenkins-Serrin-like condition for  minimal graphs in $H^2\times \RR$ with infinite boundary values, we will make regions and surfaces finite by clipping them with  the cylinders  $\Cyl(s)$.

We will use the following fact about catenoids in hyperbolic space.

\begin{theorem}\label{ribbon-theorem}
Let $C$ be a half-catenoid with axis $Z$, and let $D$ be another half-catenoid
or a totally geodesic disk such that $C$ and $D$ have the same ideal boundary circle.
For $s$ large, let $\Sigma(s)$ be the portion of $\partial \Cyl(s)$ between $C$ and $D$.
Then 
\[
  \lim_{s\to\infty} \area(\Sigma(s)) = 0.
\]
\end{theorem}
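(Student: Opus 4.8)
The plan is to compute the hyperbolic area of $\Sigma(s)$ by writing it as a surface of revolution and estimating the relevant integral as $s\to\infty$. First I would set up coordinates adapted to $\Cyl(s)$. Write points of $\BB$ using hyperbolic cylindrical coordinates $(t,\theta,h)$, where $t$ is the hyperbolic distance to the axis $Z$, $\theta$ is the rotation angle, and $h$ is a coordinate along $Z$ (say, signed hyperbolic arclength along the geodesic $Z$). In these coordinates the hyperbolic metric is $dt^2 + \sinh^2 t\, d\theta^2 + \cosh^2 t\, dh^2$, so the level set $\{t = s\} = \partial \Cyl(s)$ is a flat-looking cylinder whose induced area element is $\sinh s \cosh s\, d\theta\, dh = \frac12 \sinh(2s)\, d\theta\, dh$. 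The half-catenoids $C$ and $D$ (and a totally geodesic disk is just the degenerate catenoid $t\mapsto$ const along its generating curve, i.e. the surface $\{h = h_0\}$) meet $\{t=s\}$ in curves $h = h_C(s,\theta)$ and $h = h_D(s,\theta)$; since both $C$ and $D$ are rotationally invariant, these are actually independent of $\theta$. Thus $\area(\Sigma(s)) = 2\pi \cdot \frac12\sinh(2s)\,\big| h_C(s) - h_D(s)\big| = \pi \sinh(2s)\,\big|h_C(s)-h_D(s)\big|$.

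The crux is therefore to show $|h_C(s) - h_D(s)| = o\big(1/\sinh(2s)\big)$ as $s\to\infty$, i.e. that the two generating curves converge to each other exponentially fast in $h$ as one moves out toward the ideal boundary, at a rate beating the exponential growth $\sinh(2s)\sim \tfrac12 e^{2s}$ of the cross-sectional circumference. This is where the hypothesis that $C$ and $D$ share the \emph{same} ideal boundary circle is essential. The generating curve of a half-catenoid with axis $Z$ is a solution of the minimal surface ODE for surfaces of revolution in $\HH^3$; I would recall (or derive from the first integral of that ODE — catenoids of revolution in $\HH^3$ have an explicit first integral coming from the Killing field $\partial/\partial h$) that such a generating curve, written as $h = h(t)$, approaches its limiting value $h_\infty$ (the ``height'' of the ideal boundary circle) at the rate $h(t) - h_\infty = O(e^{-2t})$ as $t\to\infty$; a totally geodesic disk $\{h=h_0\}$ trivially has $h(t) \equiv h_0$, the borderline case. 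Since $C$ and $D$ have the same ideal boundary circle, $h_{C,\infty} = h_{D,\infty}$, so $|h_C(s) - h_D(s)| \le |h_C(s) - h_\infty| + |h_\infty - h_D(s)| = O(e^{-2s})$, and then $\sinh(2s)\cdot O(e^{-2s}) = O(1)$. To get it to go to $0$ rather than stay bounded I would sharpen the decay estimate: a more careful look at the first integral shows the two coefficients of $e^{-2t}$ in the expansions of $h_C$ and $h_D$ are controlled by the same boundary data, so in fact $|h_C(s)-h_D(s)| = o(e^{-2s})$ (or one can use that $\partial \Cyl(s)$ truncates both surfaces at the same place and argue directly that the ribbon $\Sigma(s)$ is trapped between two horospherical pieces of area tending to $0$, in the spirit of Collin–Rosenberg).

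The main obstacle, then, is the decay rate of the catenoid generating curves near the ideal boundary: I need the approach to the ideal boundary circle to be strictly faster than $e^{-2t}$, or to extract the precise constant in the $e^{-2t}$ term and see it depends only on the common ideal boundary data. I expect the cleanest route is to use the explicit first integral of the surface-of-revolution minimal equation in $\HH^3$ — for a surface $\{h = h(t)\}$, minimality gives $\frac{d}{dt}\!\left(\frac{\cosh^2 t\, \sinh t\, h'(t)}{\sqrt{1 + \cosh^2 t\, h'(t)^2}}\right) = 0$, so $\cosh^2 t\,\sinh t\, h'(t)/\sqrt{1+\cosh^2 t\, h'^2} \equiv a$ is constant (with $a=0$ the totally geodesic case) — solve for $h'(t)$, observe $h'(t) \sim a\, e^{-3t}\cdot(\text{const})$... hence $h'(t) = O(e^{-3t})$ actually, giving $h(t) - h_\infty = O(e^{-3t})$ and $\sinh(2s)\cdot O(e^{-3s}) = O(e^{-s}) \to 0$ directly, with no need to track constants. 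I would double-check the exponent bookkeeping in the metric coefficients, since that is exactly the routine computation whose outcome (does the circumference growth $e^{2s}$ beat the ribbon-width decay?) decides the theorem; everything else is then a one-line estimate.
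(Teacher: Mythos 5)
Your proposal is correct and is essentially the argument the paper gives in Appendix C, just in different coordinates: the paper passes to the upper half-space model, turns rotationally invariant minimal surfaces into geodesics of the conformal metric $\frac{2\pi\cos\theta}{\sin^2\theta}\sqrt{dt^2+d\theta^2}$ on a strip, and extracts $t(\theta)-t(0)=O(\theta^3)$ from the first integral of the Euler--Lagrange equation, so that the ribbon area $\frac{2\pi\cos\theta}{\sin^2\theta}\,O(\theta^3)=O(\theta)$ tends to $0$. Under the change of variables $\theta\approx 2e^{-s}$ this is exactly your estimate --- the generating curves approach their common asymptotic height like $e^{-3s}$, beating the $\sinh(2s)\sim\tfrac12e^{2s}$ growth of the area element of the distance cylinder --- and your exponent bookkeeping checks out.
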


(A half catenoid is, by definition, one of the two components obtained from a catenoid by removing
its waist, i.e., its unique closed geodesic.)
See  Appendix~\ref{hyperbolic-ribbons}, specifically Corollary~\ref{length-I-theta} and Remark~\ref{ribbon-theorem-proof}  for  a proof of Theorem~\ref{ribbon-theorem}.

\begin{theorem}\label{hyberbolic-minimizing-theorem}
 Consider the open unit ball $\BB\subset\RR^3$ with the Poincar\'e metric.
Let $\Rr$ be an oriented, minimal foliation of $\BB\setminus Z$ that is rotationally
invariant about $Z$, and let $\nu(\cdot)$ be the associated unit normal vectorfield compatible with
the orientation.
  Let $\Ll'$ be the sublamination consisting of rotationally invariant leaves of $\Rr$.

Let $U$ be a regular open subset of $\BB$ such that $M:=(\partial U)\cap \BB$ consists
of leaves of $\Ll'$ on which the normal $\nu$ 
 points out of $U$.
Then $M$ is area-minimizing.

Furthermore, if $M$ consists of finitely many leaves, and if $M'$ is another rotationally invariant, area-minimizing surface with 
$\partial M=\partial M'$ (as oriented surfaces in $\overline{\BB}$), then $M'$ is also
made up of oriented leaves of $\Ll'$.
\end{theorem}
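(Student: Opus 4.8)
The statement is the hyperbolic analogue of Theorem~\ref{minimizing-theorem}, and the strategy is to repeat the divergence-theorem calibration argument of that theorem, but with every surface and region truncated by the cylinders $\Cyl(s)$ so that all areas are finite. First I would set up the comparison: given a competitor surface $M'$ with $\partial M' = \partial M$, realize $M' = (\partial U')\cap\BB$ for a regular open set $U'$ with $\overline{U'}\cap\partial\BB = \overline{U}\cap\partial\BB$, exactly as in Case~1 of the proof of Theorem~\ref{minimizing-theorem}. Then, for large $s$, intersect everything with $\Cyl(s)$: set $U_s = U\cap\Cyl(s)$, $U'_s = U'\cap\Cyl(s)$, $M_s = M\cap\Cyl(s)$, $M'_s = M'\cap\Cyl(s)$, and let $\Sigma_s$ denote the ``side'' pieces of $\partial U_s$ and $\partial U'_s$ lying on $\partial\Cyl(s)$.

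**The main computation.** Since $\Rr$ is a minimal foliation, $\Div\nu = 0$ (the Proposition preceding Corollary~\ref{no-closed-surfaces-corollary}), so applying the divergence theorem on $U_s$ (legitimate since $Z$ has $2$-dimensional Hausdorff measure zero; cf.\ Theorem~\ref{generalized-divergence-theorem}) gives
\[
0 = \int_{M_s}\nu\cdot n\,dA + \int_{\Sigma_s^{U}}\nu\cdot n\,dA,
\]
and similarly on $U'_s$,
\[
0 = \int_{M'_s}\nu\cdot n'\,dA + \int_{\Sigma_s^{U'}}\nu\cdot n'\,dA.
\]
On $M_s$ we have $\nu = n$ by hypothesis, so $\int_{M_s}\nu\cdot n\,dA = \area(M_s)$; and $|\nu\cdot n'|\le 1$ on $M'_s$, so $\int_{M'_s}\nu\cdot n'\,dA \le \area(M'_s) \le \area(M')$. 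Subtracting the two identities, the difference of the two side-integrals over $\partial\Cyl(s)$ is bounded by $\int_{\Sigma_s^U}1\,dA + \int_{\Sigma_s^{U'}}1\,dA$, i.e.\ by the total area of the relevant portions of $\partial\Cyl(s)$. Letting $s\to\infty$, this error term tends to $0$ by Theorem~\ref{ribbon-theorem} (applied to the finitely many catenoidal/totally-geodesic leaves making up $M$, and to the pieces of $M'$ — one must first reduce the competitor to one with controlled ideal boundary behavior so that the ribbon estimate applies, which is why the ``furthermore'' clause assumes $M'$ is rotationally invariant and area-minimizing, hence smooth near $\partial\BB$ with catenoidal or geodesic ends). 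We conclude $\area(M_s)\le \area(M') + o(1)$, and letting $s\to\infty$ gives $\area(M) = \lim_s\area(M_s)\le\area(M')$. For the equality case, track that equality forces $\nu\equiv n'$ on $M'$, so $M'$ is swept out by leaves of $\Ll'$. The general-metric/general-$M$ reduction (Case~2 of Theorem~\ref{minimizing-theorem}, exhausting by balls) carries over verbatim, since area-minimization is a local-to-global property.

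**The main obstacle.** The delicate point is the passage to the limit on the side boundary $\partial\Cyl(s)$: one needs the truncating pieces $\Sigma_s$ of both $M$ \emph{and} the competitor $M'$ to have area tending to zero. For $M$ this is precisely Theorem~\ref{ribbon-theorem}, valid because each leaf of $\Ll'$ is (by the hyperbolic analogues of Corollary~\ref{properness-corollary} and Proposition~\ref{additional-properties-proposition}) a half-catenoid or totally geodesic disk with a well-defined ideal boundary circle. For $M'$ the issue is more subtle: a priori a competitor could have ends that spiral or accumulate wildly near $\partial\BB$, so the relevant portion of $\partial\Cyl(s)$ between $M$ and $M'$ need not have vanishing area. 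This is exactly why the ``furthermore'' clause restricts to rotationally invariant area-minimizing $M'$: such an $M'$ is, by boundary regularity, a finite union of catenoidal/geodesic pieces sharing the ideal boundary of $M$, so Theorem~\ref{ribbon-theorem} applies to the ribbons between $M$ and $M'$ as well. Making this reduction precise — identifying which ideal boundary circles are involved and checking the hypotheses of the ribbon theorem for each ribbon — is the heart of the argument; the rest is the bookkeeping of the calibration computation above.
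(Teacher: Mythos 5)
Your overall strategy --- truncate by the cylinders $\Cyl(s)$, run the divergence-theorem calibration on the truncated regions, and use the ribbon theorem to kill the boundary error --- is exactly the paper's, and you correctly identify the ribbon estimate as the crux. But the central estimate as you have written it does not close, for two reasons. First, you bound the difference of the two side-integrals by $\area(U\cap\partial\Cyl(s))+\area(U'\cap\partial\Cyl(s))$ and claim this tends to $0$ by Theorem~\ref{ribbon-theorem}. It does not: $\partial\Cyl(s)$ has infinite hyperbolic area (it reaches the ideal points $p^\pm$), and $U\cap\partial\Cyl(s)$ is in general an infinite-area piece of it, not a ribbon between two leaves. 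The quantity the ribbon theorem controls is $\area\bigl((U\Delta U')\cap\partial\Cyl(s)\bigr)$: in the difference of the two divergence-theorem identities the contributions over $U\cap U'\cap\partial\Cyl(s)$ cancel, and only the symmetric difference survives; since $U\Delta U'$ is squeezed between $M$ and $M'$, its slices on $\partial\Cyl(s)$ are ribbons between leaves sharing an ideal boundary circle, and only then does Theorem~\ref{ribbon-theorem} apply. (Relatedly, $U\cap\Cyl(s)$ does not have compact closure in $\BB$, so Theorem~\ref{generalized-divergence-theorem} cannot be applied to it directly; the paper truncates a second time, by hyperbolic distance $h$ from $\{z=0\}$ with $h$ exceeding the height of $M\cup M'$, so that the divergence theorem runs on a compact region and the caps do not contribute to the symmetric-difference terms.)

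Second, your concluding inequality $\area(M)\le\area(M')$ is vacuous: both sides are infinite, so there is no ``equality case'' to track. To extract $\nu\equiv n'$ you must cancel $\area(M\cap\Cyl(s))$ from both sides \emph{before} letting $s\to\infty$, and for that you need the reverse comparison $\area(M'\cap\Cyl(s))\le\area(M\cap\Cyl(s))+o(1)$, which comes from the hypothesis that $M'$ is area-minimizing (compare $M'\cap\Cyl(s)$ with $M\cap\Cyl(s)$ together with the ribbons over $U\Delta U'$). Your computation never invokes the minimality of $M'$, yet without it one cannot arrive at the needed inequality $0\le\int_{M'}(\nu\cdot n'-1)\,dA$, whose integrand is nonpositive and therefore forces $\nu\equiv n'$. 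Finally, the first assertion (that $M$ is area-minimizing against compact competitors) should not be run through the cylinder argument at all; as you note only in passing, it follows by exhausting $\BB$ by Euclidean balls $\BB_r$ and applying Theorem~\ref{minimizing-theorem} on each, since the Poincar\'e metric is smooth on $\overline{\BB_r}$ for $r<1$.
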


Of course $M$ has infinite area.  Recall that such a surface is said to be area-minimizing
provided every compact portion of it is area-minimizing.

\begin{proof}
Let $\BB_r=\BB(0,r)$ be the ball of Euclidean radius $r$ centered at $0$.
Thus the hyperbolic radius of $\BB_r$ tends to $\infty$ as $r\to 1$.
Note that
\[
    \Rr_r:= \{L\cap \BB_r: L\in \Rr\}
\]
is a rotationally invariant foliation of $\BB_r\setminus Z$.  
Applying Theorem~\ref{minimizing-theorem} to $\Rr_r$, $M\cap \BB_r$, and $U\cap \BB_r$, we see
that $M\cap \BB_r$ is area minimizing.    Since this is true for each $r<1$, the 
surface $M$ is area minimizing.

To prove the ``furthermore" assertion, 
let $M'$ be a rotationally invariant area-minimizing surface with $\partial M'=\partial M$.
By elementary topology, there is regular open set $U'$ such that $M'=(\partial U')\cap \BB$
and such that $\overline{U'}\cap \partial \BB = \overline{U}\cap \partial\BB$.

Note that $M'\cap \Cyl(s)$ has the same boundary as the surface consisting
of $M\cap \Cyl(s)$, $(U\setminus U')\cap \partial \Cyl(s)$, and $(U'\setminus U)\cap \partial \Cyl(s)$
(provided the latter two surfaces are oriented suitably).   Thus, since $M'$ is area-minimizing,
\begin{equation}\label{eq:M'-minimizes}
\area(M'\cap\Cyl(s))
\le
\area(M\cap \Cyl(s)) 
+ \area((U\Delta U')\cap \partial \Cyl(s)),
\end{equation}
where $U\Delta U'=(U\setminus U')\cup (U'\setminus U)$ denotes the symmetric difference of $U$ and $U'$.
By Theorem~\ref{ribbon-theorem},
\begin{equation}\label{dwindle}
\area((U\Delta U')\cap \partial \Cyl(s)) \to 0
\end{equation}
as $s\to\infty$, so by~\eqref{eq:M'-minimizes},
\begin{equation}\label{eq:M'-minimizes-o(1)}
\area(M'\cap\Cyl(s))
\le
\area(M\cap \Cyl(s)) + o(1),
\end{equation}
where $o(1)$ denotes any quantity that tends to $0$ as $s\to\infty$.

Note that $M$ and $M'$, and therefore also $U\setminus U'$ and $U'\setminus U$, lie within
a bounded hyperbolic distance $d$ of $\BB\cap\{z=0\}$.   Fix an $h>d$.
Let $\Cyl(s,h)$ denote the set of points in $\Cyl(s)$ that are at hyperbolic distance
less than $h$ from
 $\BB\cap\{z=0\}$.
Let  ${U(s,h)=U\cap{ \Cyl(s,h)}}$.
Now apply the divergence theorem to $\nu$ on ${U(s,h)}$:
\begin{equation}\label{eq:first-divergence}
\begin{aligned}
0 &= \int_{{U(s,h)}}\Div \nu \, dv
\\
&= \int_{M\cap {\Cyl(s,h)} }\nu\cdot n\,dA + \int_{U \cap \partial{ \Cyl(s,h)}} \nu\cdot n_{{ \Cyl(s,h)}}\,dA.
\end{aligned}
\end{equation}
Similarly, applying the divergence theorem to $\nu$ on $U'(s,h)=U'\cap {\Cyl(s,h)}$ gives
\begin{equation}\label{eq:second-divergence}
0 
= \int_{M'\cap { \Cyl(s,h)}} \nu\cdot n'\,dA + \int_{U' \cap \partial { \Cyl(s,h)}} \nu\cdot n_{\Cyl(s,h)}\,dA.
\end{equation}
Combining~\eqref{eq:first-divergence} and~\eqref{eq:second-divergence} gives
\begin{align*}
&\int_{M\cap { \Cyl(s,h)}} \nu\cdot n\,dA 
+ \int_{(U\setminus U') \cap \partial { \Cyl(s,h)}} \nu\cdot n_{{ \Cyl(s,h)}}\,dA
\\
&\qquad=
\int_{M'\cap \Cyl(s,h)} \nu\cdot n'\,dA 
+ \int_{(U'\setminus U) \cap \partial \Cyl(s,h)} \nu\cdot n_{\Cyl(s,h)}\,dA.
\end{align*}
By choice of $h$, none of these terms is changed if we replace $\Cyl(s,h)$ by $\Cyl(s)$:
\begin{align*}
&\int_{M\cap \Cyl(s)} \nu\cdot n\,dA 
+ \int_{(U\setminus U') \cap \partial \Cyl(s)} \nu\cdot n_{\Cyl(s)}\,dA
\\
&\qquad=
\int_{M'\cap \Cyl(s)} \nu\cdot n'\,dA 
+ \int_{(U'\setminus U) \cap \partial \Cyl(s)} \nu\cdot n_{\Cyl(s)}\,dA.
\end{align*}
Since $\nu\equiv n$ on $M$,  using ~\eqref{dwindle}, we have
\begin{align*}
\area(M\cap \Cyl(s)) 
&= \area(M'\cap \Cyl(s)) + \int_{M'\cap\Cyl(s)} (\nu\cdot n' - 1)\,dA + o(1)
\\
&\le \area(M\cap \Cyl(s)) + \int_{M'\cap\Cyl(s)} (\nu\cdot n' - 1)\,dA + o(1)
\end{align*}
by~\eqref{eq:M'-minimizes-o(1)}. 

Subtracting $\area(M\cap\Cyl(s))$ from both sides and then letting $s\to \infty$ 
gives
\[
  0 \le \int_{M'} (\nu\cdot n' -1 )\,dA
\]
which implies that $\nu\equiv n'$ on $M'$. This implies that $M'$ consists of  rotationally invariant leaves  of $\Rr$.  
\end{proof}


\section{The Hyperbolic case III. Specifying the rotationally invariant leaves of a limit lamination: Section~\ref{SpecifyingLeaves} in  the  hyperbolic case.}
\label{hyperbolic3}

For a relatively closed subset $T\subset (0,1)$, we defined in Section 6.1 a lamination
 $\mathcal C(T)$ of $\partial \BB\setminus\{p^+,p^-\}$ and a lamination $\MM(T)$ of
  $\BB\setminus Z$.
 
 \begin{theorem}\label{hyperbolic-realizing-M(T)}
Let $ \BB$  be the open unit ball with the Poincar\'e  metric. Let $T$ be a relatively closed
 subset of $(0,1)$.
There exists a sequence of  spanning minimal $\theta$-graphs in $\BB\setminus Z$
that converge to a limit lamination $\LL$ whose rotationally invariant 
leaves are precisely $\MM(T)$.
\end{theorem}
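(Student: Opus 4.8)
The plan is to run the proof of Theorem~\ref{realizing-M(T)} almost verbatim, replacing its two main inputs by their hyperbolic counterparts: the existence/uniqueness statement Theorem~\ref{UniqueEmbeddedDisk} is replaced by Theorem~\ref{hyperbolic existence} together with Proposition~\ref{hyperbolic-uniqueness}, and the area-comparison statement Theorem~\ref{minimizing-theorem} (with its Corollaries~\ref{minimizing-disks-corollary} and~\ref{minimizing-annulus-corollary}) is replaced by Theorem~\ref{hyberbolic-minimizing-theorem}. First I would assume $0\notin\overline T$ and, exactly as in the Euclidean case, pick a sequence $\gamma_i$ in the family $\GG$ of $\mu$-invariant $\theta$-graphs in $\partial\BB$ with $d\theta/dz>0$ on $\{z<0\}$ that converges smoothly to a lamination $\Cc$ of $\partial\BB\setminus\ppm$ whose rotationally invariant leaves are precisely the circles of $c(T)$; this construction takes place on the sphere $\partial\BB$ and is insensitive to the metric on $\BB$. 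By Theorem~\ref{hyperbolic existence}, each $\gamma_i\cup(Z\cap\BB)$ bounds a spanning hyperbolically minimal $\theta$-graph $D_i$ with $\overline{D_i\cup\rho_Z D_i}$ a smoothly embedded disk, and, because the boundary is $\mu$-invariant, Proposition~\ref{hyperbolic-uniqueness} forces $D_i=\mu(D_i)$. The rotations of $D_i$ foliate $\BB\setminus Z$, so the $D_i$ are stable; passing to a subsequence, $D_i\to\Ll$ smoothly on compact subsets of $\BB\setminus Z$, where $\Ll$ is a minimal lamination (Theorem~\ref{laminationproperties}) that is $\mu$-invariant and extends to a rotationally invariant minimal foliation $\Rr$ of $\BB\setminus Z$ with sublamination $\Ll'$ of rotationally invariant leaves.

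The genuinely new ingredient, which I expect to be the main obstacle, is a hyperbolic analogue of Theorem~\ref{smoothboundarybehavior} and Corollary~\ref{smoothboundarycorollary} describing the behavior of $\Ll$ at the ideal boundary; Section~\ref{SmoothConvergenceBoundary} does not apply directly since the Poincar\'e metric degenerates along $\partial\BB$. I would argue as in the proof of Theorem~\ref{hyperbolic existence}. A barrier argument (totally geodesic balls meeting $\partial\BB$ orthogonally) shows that the ideal boundary of $\Ll$ is contained in $\Cc\cup\ppm$: any point of $\partial\BB$ outside $\Cc\cup\ppm$ has a small such ball around it that is eventually disjoint from all $\gamma_i$, hence from all $D_i$. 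Schoen's stability estimate, applied with a constant uniform over the $D_i$, gives $B(D_i,p)\,\min\{1,\dist(p,I)\}\le c$ (the ideal part of $\partial D_i$ being at infinite hyperbolic distance); since $\dist(p,I)\to\infty$ as $p\to\partial\BB\setminus\ppm$, the curvatures of the $D_i$ are uniformly bounded on a hyperbolic neighborhood of each point of $\partial\BB\setminus\ppm$. The Hardt--Lin and Tonegawa boundary regularity invoked in Theorem~\ref{hyperbolic existence} then upgrades this to smooth convergence $D_i\to\Ll$ up to $\partial\BB\setminus\ppm$, each leaf of $\Ll$ meeting $\partial\BB$ orthogonally in a smooth curve that is a union of leaves of $\Cc$; and, since $\gamma_i\subset\partial D_i$ with $\gamma_i\to\Cc$, every leaf of $\Cc$ occurs as a component of the ideal boundary of some leaf of $\Ll$. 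The delicate points are the uniformity of the estimates along the sequence and the behavior near $\ppm$; the latter I would handle exactly as in Theorem~\ref{hyperbolic existence}, via a M\"obius-rescaling argument showing that no curvature concentrates at the poles.

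Granting this boundary package, the rest is the translation of Steps~1 and~2 of the proof of Theorem~\ref{realizing-M(T)}. For Step~1: a rotationally invariant leaf $L\in\Ll'$ is a punctured disk or an annulus by the first assertion of Proposition~\ref{additional-properties-proposition} (whose proof uses only the foliation $\Rr$, Statement~\ref{main-theorem-puncture-item} of Theorem~\ref{laminationproperties}, and Corollary~\ref{properness-corollary}, none of which needs mean convexity), and its ideal boundary, being a union of rotationally invariant curves in $\Cc$, consists of circles $c^\pm(a)$ with $a\in T$. If $L$ is a disk then $\mu(L)\in\Ll'$ as well and $L\cup\mu(L)$ has ideal boundary $c(a)$; if $L$ is an annulus then $L$ and $\mu(L)$ meet the totally geodesic disk $\{z=0\}$ in the same circle (since $\mu$ fixes that disk pointwise), so $L=\mu(L)$ and the ideal boundary of $L$ is $c(a)$ for a single $a\in T$. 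In either case Theorem~\ref{hyberbolic-minimizing-theorem} shows $L$ (resp.\ $L\cup\mu(L)$) is area minimizing, hence lies in $\MM(a)\subset\MM(T)$. For Step~2: given $a\in T$, the circle $c^+(a)\in\Cc$ is a component of the ideal boundary of some $L\in\Ll'$; completing $L$ by $\mu$-symmetry to a rotationally invariant surface $\Sigma$ with ideal boundary $c(a)$, Theorem~\ref{hyberbolic-minimizing-theorem} gives $\Sigma\in\MM(a)$, and its ``furthermore'' clause shows that any other element of $\MM(a)$ (which, together with $\Sigma$, bounds a region) also consists of leaves of $\Ll'$. Hence $\Ll'=\MM(T)$.

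Finally, the case $0\in\overline T$ is handled as at the end of the proof of Theorem~\ref{realizing-M(T)}: no sequence in $\GG$ can limit onto circles arbitrarily close to the equator, but one can still take $\gamma_i\in\GG$ converging smoothly on $\partial\BB\setminus(\ppm\cup\{z=0\})$ to a lamination whose rotationally invariant leaves are $c(T)$, and the rest of the argument is the same.
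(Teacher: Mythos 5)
Your proposal is correct and follows essentially the same route as the paper: choose $\gamma_i\in\GG$ limiting onto a lamination of $\partial\BB\setminus\ppm$ with rotationally invariant leaves $c(T)$, invoke Theorem~\ref{hyperbolic existence} and Proposition~\ref{hyperbolic-uniqueness} for the $\mu$-invariant disks $D_i$, and then rerun Steps~1 and~2 of Theorem~\ref{realizing-M(T)} with Theorem~\ref{hyberbolic-minimizing-theorem} substituted for Theorem~\ref{minimizing-theorem}. The one place you go beyond the paper is in explicitly flagging and sketching the hyperbolic replacement for the boundary-convergence package of Section~\ref{SmoothConvergenceBoundary} (barriers, the stability estimate, and Hardt--Lin/Tonegawa regularity at the ideal boundary), a point the paper's proof passes over silently; your sketch of how to supply it is consistent with the techniques used in the proof of Theorem~\ref{hyperbolic existence}.
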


This is the hyperbolic version of Theorem~\ref{realizing-M(T)}. That theorem is for Riemannian
metrics on $\BB$ that extend smoothly to $\overline{\BB}$, 
 something that is not true 
for the Poincar\'e metric. 
Nevertheless, we can use the results of the previous sections to prove this result. 

\begin{proof}
We follow the proof of Theorem~\ref{realizing-M(T)}. Start with a sequence $\gamma_i\subset \partial B$ of $\theta$-graphs with the bulleted properties that define 
$\GG$. Choose them so that they that converge 
 to a lamination  $\mathcal C$  of $\partial B\setminus Z$ whose 
rotationally invariant leaves are precisely the circles  in $c(T)$,
where the convergence is smooth except possibly where $z=0$.
By Theorem~\ref{hyperbolic existence}, we may assert the existence of a smooth, embedded, minimal $\theta$-graph $D_i$  with boundary $\gamma_i\cup (Z\cap\overline{\BB})$. Since this boundary is $\mu$-invariant it follows from Proposition~{7.2} that $D_i$ is also $\mu$-invariant.   Passing to a subsequence, we may assume that these $\theta$-graphs  converge smoothly to a lamination $\LL$ of $\overline B\setminus Z$. This limit lamination is also $\mu$-invariant.  We must show that the limit leaves of $\LL$ are precisely the rotationally invariant surfaces in $\MM(T)$.

The limit leaves of $\LL'\subset \LL$ together with the rotations about $Z$ of the non-limit leaves of $\LL$ form an oriented minimal  foliation $\Rr$ of $\BB\setminus Z$ that is rotationally invariant about $Z$. Therefore, we may use Theorem~\ref{hyberbolic-minimizing-theorem}. This theorem can be easily used to show that 
Corollaries~\ref{minimizing-disks-corollary} and \ref{minimizing-annulus-corollary} hold in hyperbolic space.  The arguments in Steps~1 and ~2 of the proof of 
Theorem~\ref{realizing-M(T)} are  now directly applicable to our situation, using
Theorem~\ref{hyberbolic-minimizing-theorem} where Theorem~\ref{minimizing-theorem}
is invoked.
\end{proof} 

 \begin{remark} To be precise, the limit leaves are the annuli, if any, in $\MM(T)$ together with the disks in $\MM(T)$, if any,  with their centers removed. By not removing  the centers, we may consider $\MM(T)$ as a lamination of $\BB$. 
\end{remark}

As a application of Theorem~\ref{hyperbolic-realizing-M(T)}, let $a$ be small enough
so that $\MM(a)$ consists of one or more catenoids.
 (See~\eqref{annuli-item} of Section~\ref{SpecifyingLeaves}.) 
  Theorem~\ref{hyperbolic-realizing-M(T)} above tells us we may realize  $\MM(a)$ as the limit leaves of a limit lamination of $\BB\setminus Z$.
Doubling the nonlimit leaves of the limit lamination by reflection in $Z$ produces a
lamination of $\BB$ with the same limit leaves $\MM(a)$, one nonlimit leaf in the component of $\BB\setminus\cup \MM(a)$ that contains $\BB\cap Z$, and two congruent leaves in every other component of  $\BB\setminus \cup \MM(a)$.  Consequently:

\begin{theorem}\label{CCCounterexample} There exist complete, embedded, simply connected minimal surfaces
in hyperbolic space that are not properly embedded. In particular for every area-minimizing catenoid $C$ in hyperbolic space, there exist two complete, noncongruent, simply connected, embedded  minimal surfaces (one one either side of $C$) that have $C$ in their closure.
\end{theorem}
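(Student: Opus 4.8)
The plan is to feed a well-chosen singleton $T=\{a\}$ into the hyperbolic realization theorem, Theorem~\ref{hyperbolic-realizing-M(T)}, and then read off the non-limit leaves of the resulting doubled limit lamination. Realize $\HH^3$ as the open unit ball $\BB$ with the Poincar\'e metric, and let $C$ be the given area-minimizing catenoid, with ideal boundary $c(a)=c^+(a)\cup c^-(a)$ for some $a\in(0,1)$. Since $C$ is an annulus we have $a<a_{\rm crit}$, so $\MM(a)$ contains no pair of disks; moreover the area-minimizing catenoid with ideal boundary $c(a)$ is unique, so $\MM(a)=\{C\}$. Applying Theorem~\ref{hyperbolic-realizing-M(T)} with this $T$ produces a sequence $D_n$ of spanning minimal $\theta$-graphs in $\BB\setminus Z$ converging to a limit lamination $\LL$ whose rotationally invariant leaves are precisely $\{C\}$. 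Because there is no disk among those leaves, the curvature blow-up set $K$ is empty (Statement~\ref{main-theorem-blowup-item} of Theorem~\ref{laminationproperties}, together with the Remark after Theorem~\ref{hyperbolic-realizing-M(T)}). Hence, by the corollary following Theorem~\ref{laminationproperties}, the doubled disks $D_n\cup(\BB\cap Z)\cup\rho_ZD_n$ converge smoothly on all of $\BB$ to a minimal lamination $\LL^*$ of the whole of $\HH^3$. Write $\mathcal O_+$ for the component of $\BB\setminus C$ containing the axis $I:=\BB\cap Z$ and $\mathcal O_-$ for the other. Then $\LL^*$ contains a single ``doubled'' non-limit leaf $\Sigma_+:=(\overline{L_+}\cup\rho_Z\overline{L_+})\cap\mathcal O_+$ inside $\mathcal O_+$, where $L_+$ is the unique non-limit leaf of $\LL$ in $\mathcal O_+$ (a spanning $\theta$-graph, by Statement~\ref{main-theorem-item4} of Theorem~\ref{laminationproperties}), and it contains two non-limit leaves inside $\mathcal O_-$, interchanged by $\rho_Z$; let $\Sigma_-$ denote one of these.

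I would then check that $\Sigma_+$ and $\Sigma_-$ have the properties claimed. Each is an embedded minimal surface, being a leaf of $\LL^*$. Each is complete: $\LL^*$ laminates the complete manifold $\HH^3$ (this is exactly where $K=\emptyset$ is used), and a leaf of a lamination of a complete manifold is complete in its induced metric, since a unit-speed geodesic in a leaf that was defined only on a bounded interval would Cauchy-converge in $\HH^3$ to a point lying in a plaque of some lamination chart, hence in the leaf, contradicting inextendibility. Each is simply connected: $\Sigma_-$ is a spanning $\theta$-graph over a planar domain that is a topological disk (the region between the generating curve of $C$ and the ideal boundary), hence is itself a disk; and $\Sigma_+$ is the Schwarz double across the complete geodesic $I$ of a spanning $\theta$-graph over a simply connected domain (Statements~\ref{main-theorem-item4} and~\ref{main-theorem-schwarz-item} of Theorem~\ref{laminationproperties}), that is, a closed half-plane glued to its mirror image along the boundary line $I$, which is again a plane.

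Finally I would show that neither $\Sigma_+$ nor $\Sigma_-$ is properly embedded, that each has $C$ in its closure, and that the two are not congruent. Since $L_\pm$ is a spanning $\theta$-graph in $\mathcal O_\pm$, it meets each rotationally invariant circle of $\mathcal O_\pm\setminus Z$ exactly once; choosing such circles $S_k\subset\mathcal O_\pm$ that converge to a latitude circle $S$ of $C=(\partial\mathcal O_\pm)\cap\BB$ and passing to a subsequence of the points $L_\pm\cap S_k$, we obtain a point $q\in S$ with $q\in\overline{\Sigma_\pm}$; and $q\notin\Sigma_\pm$, since $C$ is a leaf of $\LL^*$ distinct from $\Sigma_\pm$ and leaves of a lamination are disjoint. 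Thus $C\subset\overline{\Sigma_\pm}\setminus\Sigma_\pm$, so $\Sigma_\pm$ is not closed in $\HH^3$, hence not properly embedded, and $C$ lies in its closure. For noncongruence, one uses that $\overline{\Sigma_\pm}\cap\HH^3=\Sigma_\pm\cup C$ (as $\Sigma_\pm$ is closed in $\mathcal O_\pm$): an isometry of $\HH^3$ carrying $\Sigma_+$ to $\Sigma_-$ would then carry $C$ onto $C$, hence preserve the axis of $C$ and therefore the two sides of $C$, contradicting $\Sigma_+\subset\mathcal O_+$ and $\Sigma_-\subset\mathcal O_-$. Granting the constructions of the previous sections, there is no serious obstacle here: the argument is essentially bookkeeping on top of Theorem~\ref{hyperbolic-realizing-M(T)} and the curvature and boundary-regularity estimates behind it. The one point that must not be overlooked is the choice of $T$: it is precisely the absence of disks from $\MM(a)$ that makes $K$ empty, which is what promotes $\LL^*$ from a lamination of $\HH^3\setminus K$ to a lamination of all of $\HH^3$ and hence forces its leaves to be complete. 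A subsidiary point to pin down is the identity $\overline{\Sigma_\pm}\cap\HH^3=\Sigma_\pm\cup C$ used in the noncongruence step (equivalently, that $\Sigma_\pm$ accumulates on enough of $C$ to recover $C$ by analytic continuation); absent this, noncongruence can instead be read off from the asymmetry between the two sides of an area-minimizing catenoid.
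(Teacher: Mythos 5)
Your proof follows essentially the same route as the paper's: apply Theorem~\ref{hyperbolic-realizing-M(T)} with $T=\{a\}$ chosen so that $\MM(a)$ consists of catenoids, double the resulting limit lamination across $Z$, and identify the two non-limit leaves adjacent to $C$ (one on each side) as the desired non-properly-embedded surfaces. The paper compresses this into three sentences and leaves the verifications (completeness via $K=\emptyset$, simple connectivity, non-properness, noncongruence) implicit, so your write-up merely makes them explicit; the only slight over-claim is that $C$ being an annulus forces $a<a_{\rm crit}$ (a priori only $a\le a_{\rm crit}$), an edge case the paper's own proof also sidesteps by taking $a$ small.
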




\appendix
\section{The divergence theorem}

\begin{theorem}[Generalized Divergence Theorem]\label{generalized-divergence-theorem}
Suppose that $\Omega$ is a domain with compact closure and with piecewise smooth
boundary in a Riemannian $(m+1)$-manifold.  Suppose that $K$ is a compact subset
of $\overline{\Omega}$ with Hausdorff $m$-dimensional measure $0$, and that
$\nu$ is a bounded $C^1$ vectorfield on $\overline{\Omega}\setminus K$
such that $\int |\Div \nu|\,dV< \infty$.  Then
\[
   \int_\Omega\Div\nu\,dV = \int_{\partial \Omega}\nu\cdot n\,dA,
\]
where $n$ is the unit  normal to $\partial \Omega$ that points out of $\Omega$.
\end{theorem}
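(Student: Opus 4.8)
The plan is to reduce the statement to the classical divergence theorem by replacing $\nu$ with $\phi_\epsilon\nu$, where $\phi_\epsilon$ is a smooth cutoff that vanishes on a shrinking neighborhood of $K$, and then letting $\epsilon\downarrow0$. Fix $\epsilon\in(0,1)$. Since $K$ is compact with $\mathcal{H}^m(K)=0$, I would cover $K$ by finitely many geodesic balls $B(x_i,r_i)$, $i=1,\dots,N$, with $r_i<1$ and $\sum_i r_i^m<\epsilon$. Then I would choose smooth functions $\psi_i\colon\overline\Omega\to[0,1]$ equal to $0$ on $B(x_i,r_i)$, equal to $1$ off $B(x_i,2r_i)$, with $|\nabla\psi_i|\le c/r_i$, and set $\phi_\epsilon=\prod_i\psi_i$. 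This $\phi_\epsilon$ is smooth on $\overline\Omega$, vanishes on a neighborhood of $K$, and equals $1$ off $\bigcup_i B(x_i,2r_i)$, so $\phi_\epsilon\nu$ (set equal to $0$ where $\phi_\epsilon=0$) is a $C^1$ vectorfield on $\overline\Omega$; hence the classical divergence theorem applies and gives
\[
\int_\Omega\phi_\epsilon\,\Div\nu\,dV+\int_\Omega\nabla\phi_\epsilon\cdot\nu\,dV=\int_\Omega\Div(\phi_\epsilon\nu)\,dV=\int_{\partial\Omega}\phi_\epsilon\,\nu\cdot n\,dA.
\]

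The one genuinely load-bearing estimate is $\int_\Omega|\nabla\phi_\epsilon|\,dV\to0$. Since $\nabla\phi_\epsilon$ is supported in the union of the annuli $B(x_i,2r_i)\setminus B(x_i,r_i)$, where $|\nabla\phi_\epsilon|\le c/r_i$, and each ball $B(x_i,2r_i)$ has volume at most $Cr_i^{m+1}$ (the ambient metric being fixed and the radii $<1$),
\[
\int_\Omega|\nabla\phi_\epsilon|\,dV\le\sum_{i=1}^N\frac{c}{r_i}\cdot Cr_i^{m+1}=cC\sum_i r_i^m<cC\,\epsilon.
\]
This is exactly where the hypothesis $\mathcal{H}^m(K)=0$ is used: it is what makes such ``capacity-zero'' cutoffs exist. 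Combined with $\|\nu\|_\infty<\infty$ this gives $\bigl|\int_\Omega\nabla\phi_\epsilon\cdot\nu\,dV\bigr|\le\|\nu\|_\infty\int_\Omega|\nabla\phi_\epsilon|\,dV\to0$. The $L^\infty$ bound on $\nu$ here cannot be dropped, since removable-singularity statements of this type fail for merely integrable vectorfields.

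Finally I would pass to the limit in the remaining two terms. Writing $A_\epsilon:=\bigcup_i B(x_i,2r_i)$, one has $\mathrm{vol}(A_\epsilon)\le C\sum_i(2r_i)^{m+1}\le C'\epsilon$ and $\mathcal{H}^m((\partial\Omega)\cap A_\epsilon)\le C\sum_i r_i^m<C'\epsilon$ (using that $\partial\Omega$ is a fixed compact piecewise smooth hypersurface, hence $\mathcal{H}^m$-regular). Since $0\le\phi_\epsilon\le1$, $\phi_\epsilon\equiv1$ off $A_\epsilon$, and $|\Div\nu|\in L^1(\Omega)$ by hypothesis, $\bigl|\int_\Omega(1-\phi_\epsilon)\,\Div\nu\,dV\bigr|\le\int_{A_\epsilon}|\Div\nu|\,dV\to0$ by absolute continuity of the integral, and $\bigl|\int_{\partial\Omega}(1-\phi_\epsilon)\,\nu\cdot n\,dA\bigr|\le\|\nu\|_\infty\,\mathcal{H}^m((\partial\Omega)\cap A_\epsilon)\to0$. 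Letting $\epsilon\downarrow0$ in the displayed identity yields $\int_\Omega\Div\nu\,dV=\int_{\partial\Omega}\nu\cdot n\,dA$. The main obstacle is thus the construction and gradient estimate of $\phi_\epsilon$; the rest is routine measure theory. (Alternatively one could excise the closed balls $\overline{B(x_i,r_i)}$ from $\Omega$ after a Sard-type perturbation of the radii making the spheres $\partial B(x_i,r_i)$ transverse to $\partial\Omega$ and to one another, apply the classical divergence theorem on the excised domain, and bound the extra boundary integral over $\bigcup_i\partial B(x_i,r_i)$ by $\|\nu\|_\infty\sum_i\mathcal{H}^m(\partial B(x_i,r_i))<C\epsilon$; this is equivalent but requires the transversality bookkeeping.)
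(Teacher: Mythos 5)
Your proof is correct, and your \emph{main} argument takes a genuinely different (though closely parallel) route from the paper's. The paper excises a shrinking finite union of balls $W(n)\supset K$ chosen so that the total area of $\partial W(n)$ is less than $\eps_n$, applies the classical divergence theorem on $\Omega\setminus W(n)$, absorbs the extra term $\int_{\Omega\cap\partial W(n)}\nu\cdot n\,dA$ using $\|\nu\|_\infty\cdot\area(\partial W(n))\le \|\nu\|_\infty\,\eps_n$, and finishes with dominated convergence. You instead keep the domain fixed and multiply $\nu$ by a smooth cutoff $\phi_\eps$ vanishing near $K$, paying with the interior error term $\int|\nabla\phi_\eps\cdot\nu|$, which you control by the annulus-volume-over-radius estimate $\sum_i (c/r_i)\,r_i^{m+1}=c\sum_i r_i^m<c\eps$. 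These are two faces of the same underlying fact --- that a compact set of vanishing $\mathcal{H}^m$-measure has zero ``divergence-theorem capacity'' --- and indeed your closing parenthetical alternative is essentially the paper's proof verbatim. What your version buys is that you never need the excised domain $\Omega\setminus W(n)$ to have piecewise smooth boundary (no transversality of the spheres with $\partial\Omega$ or with each other), a point the paper glosses over; what it costs is the extra bookkeeping of the boundary term $\int_{\partial\Omega}(1-\phi_\eps)\,\nu\cdot n\,dA$, which you correctly handle via the upper Ahlfors regularity $\mathcal{H}^m(\partial\Omega\cap B(x_i,2r_i))\le Cr_i^m$ of the fixed piecewise smooth hypersurface $\partial\Omega$. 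All the limit passages (absolute continuity of $\int|\Div\nu|\,dV$ over $A_\eps$ with $\operatorname{vol}(A_\eps)\to0$, and boundedness of $\nu$ for the gradient and boundary terms) are sound.
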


Here $dV$ and $dA$ indicate integration with respect to 
$(m+1)$-dimensional volume and $m$-dimensional area (i.e., 
with respect to Hausdorff measure of those dimensions.)

\begin{proof}
Let $\eps_n$ be a sequence of positive numbers converging to $0$.
Note that for each $n$, we can cover $K$ by open balls such that the sum of the areas of the boundaries of the balls
 is less than $\eps_n$. 
 Since $K$ is compact, we can cover it by a finite
collection of such balls.  Let $W(n)$ be the union of those balls.
Note that for each $n$, the balls may be chosen to have arbitrarily small
radii.  In particular, we can choose the balls at stage $n$ so that
$W(n)\subset W(n-1)$.  Note that $\cap_nW(n)=K$.
 Applying the divergence theorem
to $\Omega\setminus W(n)$ gives
\begin{align*}
  \int_{\Omega\setminus W(n)}\Div \nu\,dV
  &=
  \int_{\partial (\Omega\setminus W(n))} \nu\cdot n\,dA
  \\
  &=
  \int_{(\partial \Omega)\setminus W(n)}\nu\cdot n\,dA
  +
  \int_{\Omega\cap\partial W(n)} \nu\cdot n\,dA
\end{align*}
This last integral is bounded in absolute value by the supremum of $|\nu|$
times the area of $\partial W(n)$; that area is bounded by $\eps_n$ by choice of $W(n)$.
Thus
\[
\int_{\Omega\setminus W(n)} \Div\nu\,dV
=
 \int_{(\partial \Omega)\setminus W(n)}\nu\cdot n\,dA + O(\eps_n).
\]
Now use the dominated convergence theorem to take the limit as $n\to\infty$.
(Recall that the $W(n)$ are nested and 
that $\cap_n W(n)=K$.)
\end{proof}

\section{Minimal graphs}
\label{MinimalGraphs}
Let $N$ be a smooth $2$-manifold with boundary.
Let $g$ be a smooth Riemannian metric (not necessarily complete) on $N\times \RR$ that is invariant
under vertical translations. Suppose also that $N\times \RR$ is 
 strictly mean convex, i.e. the mean curvature vector
of $\partial (N\times \RR)$ is a positive multiple of the inward-pointing unit normal. We will assume when necessary that $N $ has been isometrically embedded in some Euclidean space $\RR^k$, so that $N\times \RR$ lies in $\RR ^{k+1}$. Thus translation and dilation make sense. 

We are interested in  graphs over $N$:
Let
\[
  f: N\setminus\partial N\to \RR
\]
be a smooth function whose graph is a $g$-minimal surface $M$. {\em We will assume throughout this Appendix that $M$ is such a graph, and that $\overline{M}$ is a smoothly embedded
manifold-with-boundary, where the boundary is $\Gamma:=\overline{M}\cap\partial N$.} 
We will also assume that
the curvature of $\Gamma$ and its derivative with respect to arclength 
 are bounded above by some $\kappa< \infty$. 
 
Letting $N$ be a convex domain in $\RR^2$ gives the simplest example of this setting. Here, the metric $g$ on $N\times\RR$ is the product metric, and it is the standard Euclidean metric. 
 In this paper we are considering rotationally symmetric (around the $z$-axis $Z$) domains $W\subset \RR ^3$ endowed with  Riemannian metrics that are also rotationally symmetric. 
The simply connected covering space  of $W\setminus Z$ can be written in the form $N\times \RR$, where
 $$N=\{(x,z)\,:\, x>0,\, (x,0,z)\in W\}.$$
 Vertical translations in $N\times \RR$ correspond to rotations in $W\setminus Z$. The metric $g$ on $N\times \RR$ lifted from the metric on $W\setminus Z$ is translation invariant but it is not the product metric.

  \begin{proposition}\label{areaestimate0} 
Suppose that $K$ is any compact region in $(N\setminus\partial N)\times\RR$
 with piecewise-smooth, mean convex boundary.
Then
\begin{enumerate}
\item[1.]The surface
 $M\cap K$ has less area than any other surface in $K$ having the same boundary. 
 \item[2.] Furthermore, 
\begin{equation*}
  \area(M\cap K)\le \frac12\area(\partial K).
\end{equation*}
\end{enumerate}
\end{proposition}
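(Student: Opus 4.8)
The plan is to prove both statements by exhibiting $M \cap K$ as a calibrated surface, using the foliation of $(N\setminus\partial N)\times\RR$ by vertical translates of $M$. Since the metric $g$ on $N\times\RR$ is invariant under vertical translations, each vertical translate $M_t := M + t\ee_{\text{vert}}$ (where $\ee_{\text{vert}}$ denotes the unit-speed direction of translation, or rather the translation by parameter $t$) is again a $g$-minimal graph over $N\setminus\partial N$. Because $M$ is the graph of a single smooth function $f$ on $N\setminus\partial N$, the translates $M_t$, $t\in\RR$, are pairwise disjoint and their union is all of $(N\setminus\partial N)\times\RR$. So we obtain a foliation of $(N\setminus\partial N)\times\RR$ by minimal surfaces, hence a smooth unit normal vectorfield $\nu$ to this foliation defined on all of $(N\setminus\partial N)\times\RR$.

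First I would verify that the 2-form $\omega$ dual to $\nu$ (i.e. $\omega(\cdot,\cdot) = \langle \nu, \cdot \times \cdot\rangle$ in the metric $g$, or more invariantly the form whose value on an oriented orthonormal pair is the signed volume together with $\nu$) is closed. This is the standard calibration fact: the exterior derivative $d\omega$ is, up to sign, the mean-curvature of the leaves times the volume form, which vanishes because every leaf $M_t$ is minimal. (Equivalently, one can phrase this via $\ddiv \nu = 0$ as in the Proposition preceding Corollary~\ref{no-closed-surfaces-corollary} of the main text — $\nu$ is the unit normal to a minimal foliation, so its divergence vanishes, and a closed coflux form is exactly a calibration.) Since $K \subset (N\setminus\partial N)\times\RR$ is compact, $\omega$ is a smooth closed 2-form on a neighborhood of $K$, comass $1$, calibrating the leaf $M_0 = M$. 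Then for any competitor surface $\Sigma \subset K$ with $\partial\Sigma = \partial(M\cap K)$, Stokes' theorem gives
\[
  \area(M\cap K) = \int_{M\cap K}\omega = \int_{\Sigma}\omega \le \area(\Sigma),
\]
the middle equality because $M\cap K$ and $\Sigma$ are homologous in $K$ and $\omega$ is closed, the final inequality because $\omega$ has comass $1$. This proves statement~1.

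For statement~2, I would apply statement~1 with a specific competitor: replace $M\cap K$ by a piece of $\partial K$. Precisely, $M\cap K$ separates $\partial K$ into two parts $A^+$ and $A^-$ (with $\partial A^\pm = \partial(M\cap K)$ up to orientation), using that $\partial K$ is mean convex — hence in particular $M\cap K$ meets $\partial K$ only along $\partial(M\cap K)$, and the complement of that curve in $\partial K$ has two components. Each of $A^+, A^-$ is a valid competitor, so $\area(M\cap K) \le \min(\area A^+, \area A^-) \le \tfrac12(\area A^+ + \area A^-) = \tfrac12 \area(\partial K)$.

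The main obstacle I anticipate is the step showing the translates $M_t$ genuinely foliate the full solid region $(N\setminus\partial N)\times\RR$ and not just an open subset — this is immediate here because $M$ is by hypothesis the graph of a function defined on all of $N\setminus\partial N$, so $M_t$ is the graph of $f + t$ and the translates sweep out everything — but one must be slightly careful that the mean-convexity hypothesis on $\partial(N\times\RR)$ is what guarantees $M$ stays in the interior and hence that the foliation is by surfaces with no boundary in the relevant region, so that the calibration argument near $\partial K$ is clean. A secondary point to check is that $K$ being compact and contained in the open solid $(N\setminus\partial N)\times\RR$ ensures $\nu$ (and $\omega$) are smooth and bounded on a neighborhood of $K$, so Stokes applies without boundary terms on $\partial(N\times\RR)$.
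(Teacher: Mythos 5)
Your proof is correct in substance but takes a genuinely different route from the paper's. Both arguments begin with the same observation, namely that the vertical translates of $M$ foliate $(N\setminus\partial N)\times\RR$ by minimal graphs; but you then run the calibration argument (the $2$-form $\omega=\iota_\nu\,dV$ is closed because $\Div\nu=0$, has comass one, and restricts to the area form on the leaves), whereas the paper takes a least-area flat chain mod $2$ with the prescribed boundary and applies the strong maximum principle at a point where the height function $F(x,z)=z-f(x)$ attains its extremum over that minimizer, forcing a whole component into a single leaf and deriving a contradiction. Your route is cleaner and does not in fact use the mean convexity of $\partial K$ (the paper needs it to keep the minimizer off $\partial K$ so that the interior maximum principle applies); the paper's route avoids all discussion of orientation and homology classes. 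Statement~2 is deduced from Statement~1 in the same way in both arguments, and your observation that mean convexity is what makes the two pieces of $\partial K$ legitimate competitors is unnecessary: each of $(\partial K)\cap\{F>0\}$ and $(\partial K)\cap\{F<0\}$ together with $M\cap K$ bounds $K\cap\{F>0\}$, respectively $K\cap\{F<0\}$, so each has the same boundary as $M\cap K$ mod $2$.

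One step needs a patch. The equality $\int_{M\cap K}\omega=\int_\Sigma\omega$ requires the cycle $(M\cap K)-\Sigma$ to bound a $3$-chain inside the region where $\omega$ is defined and closed, and your assertion that the two surfaces are ``homologous in $K$'' is neither automatic (a compact region $K$ in a $3$-manifold can carry nontrivial second homology) nor quite what you need. What rescues the argument is that $\omega$ is defined and closed on all of $(N\setminus\partial N)\times\RR$, which deformation retracts onto the open surface $N\setminus\partial N$; its second homology vanishes in every case the paper uses (an open planar domain), so the cycle bounds there and Stokes applies. You should either insert that remark or observe that homological minimization already suffices for Statement~2 and for all the applications in the paper.
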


 \begin{theorem}\label{curvatureestimate0}
If $U$ is an open subset of $N$  with compact closure and if $\,\UU = U\times \RR$, then
\begin{equation*}
   B(M,p) \, \dist(p, \UU^c) < C.
\end{equation*}
for some constant $C=C(N,g,\UU,\kappa)<\infty$.
\end{theorem}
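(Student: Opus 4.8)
The goal is an interior--in--$N$ curvature estimate for the minimal graph $M$ that degenerates (if at all) only as one approaches $\partial(\,\UU)$, with a constant depending on $N$, $g$, $\UU$, and the boundary curvature bound $\kappa$. The strategy is the classical one for minimal graphs: \emph{such a surface, being a graph over $N$, is stable; combine stability-type curvature estimates with a blow-up/compactness argument}, using Proposition~\ref{areaestimate0} to keep areas under control and the embedded boundary regularity hypothesis on $\Gamma$ to handle points near the boundary of $N$. The translation invariance of $g$ means the vertical Killing field $\partial/\partial t$ descends to a nowhere-vanishing Jacobi field on $M$, so $M$ is stable; moreover the bound $\area(M\cap K)\le \tfrac12\area(\partial K)$ from Proposition~\ref{areaestimate0} gives uniform local area bounds. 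With stability and local area bounds in hand, the standard Schoen-type curvature estimate for stable minimal surfaces yields $B(M,p)\,\dist_g(p,\partial M)\le c$ \emph{at interior points}, i.e.\ away from $\Gamma$; the remaining work is to get a uniform bound up to $\Gamma$ in terms of $\kappa$.

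\textbf{Key steps, in order.} First, record that $M$ is stable (via the Killing Jacobi field) and that Proposition~\ref{areaestimate0}(2) gives, for every ball $\BB(p,r)$ with $\overline{\BB(p,r)}\subset (N\setminus\partial N)\times\RR$, the bound $\area(M\cap \BB(p,r))\le C r^2$. Second, invoke the interior curvature estimate for stable embedded minimal surfaces to obtain $\sup_{p}\, B(M,p)\,\min\{1,\dist(p,\Gamma)\}\le c$ where $\Gamma=\overline M\cap\partial N$ and $c$ depends only on $N,g$; this already handles points of $\UU$ lying at definite $N$-distance from $\partial N$. Third --- the boundary part --- suppose the desired estimate fails. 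Then there is a sequence $p_j\in M$ with $B(M,p_j)\,\dist(p_j,\UU^c)\to\infty$; by step two, $p_j$ must approach $\Gamma$ (its $N$-projection approaches $\partial N$), while $\dist(p_j,\UU^c)$ stays bounded below, so the $N$-projections $\pi(p_j)$ accumulate on $\partial N\cap \overline U$. Fourth, rescale: let $\lambda_j=B(M,p_j)\to\infty$, translate $p_j$ to the origin, and dilate by $\lambda_j$. Because $g$ is translation invariant and smooth, the rescaled metrics converge smoothly to a flat metric on $\RR^3$; because $\overline M$ is a smoothly embedded manifold-with-boundary with $\Gamma$ having curvature and arclength-derivative of curvature bounded by $\kappa$, the rescaled boundary curves flatten out to a straight line (or disappear to infinity). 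The area bound from Proposition~\ref{areaestimate0}, which is scale-invariant, passes to the limit. Fifth, extract a subsequential smooth limit: a complete (possibly with straight-line boundary) stable embedded minimal surface in flat $\RR^3$ with Euclidean area growth and with $B=1$ at the origin. By the Bernstein-type theorem for stable minimal surfaces --- a stable complete minimal surface in $\RR^3$ is a plane, and the halfspace/boundary version with a flat totally geodesic boundary line likewise forces a plane --- the limit is flat, contradicting $B=1$ at the origin. Hence the estimate holds.

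\textbf{Main obstacle.} The delicate point is the boundary blow-up analysis in steps three through five: one must be sure that after rescaling around points $p_j$ converging to $\Gamma$, the limit is genuinely a stable minimal surface-with-(straight)-boundary and that no curvature is lost to the rescaled boundary. This is where the hypotheses that $\overline M$ is a smoothly embedded manifold-with-boundary and that the curvature of $\Gamma$ \emph{and its arclength derivative} are $\le\kappa$ are essential: they guarantee $C^{2}$-control of $\Gamma$, so the rescaled boundaries converge to a line in $C^{1,\alpha}$ and the rescaled surfaces, by elliptic boundary estimates for the minimal surface equation as graphs (valid since $M$ is a graph, hence has a well-defined Gauss map avoiding a hemisphere locally), converge smoothly up to the boundary. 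One also needs to rule out the alternative that the rescaled boundaries escape to infinity, leaving a complete \emph{boundaryless} stable minimal surface --- but that case is handled by the same Bernstein theorem and is in fact easier. A secondary technical point is verifying that the scale-invariant area bound $\area(M\cap\BB)\le Cr^2$ survives the blow-up with constant independent of $j$; this follows because Proposition~\ref{areaestimate0}(2) applies on balls whose rescaled radii are as large as we like once $j$ is large, since $\dist(p_j,\UU^c)$ is bounded below and $\lambda_j\to\infty$. Assembling these ingredients gives the constant $C=C(N,g,\UU,\kappa)$ claimed.
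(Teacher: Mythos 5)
Your overall architecture is the same as the paper's: argue by contradiction, blow up at bad points, pass to a flat limit, and kill the limit with a Bernstein/P\'erez-type rigidity theorem, using Proposition~\ref{areaestimate0} for scale-invariant area bounds and the $C^2$ control on $\Gamma$ to flatten the boundary. However, as written the argument has a genuine gap at its central step, the extraction of a smooth blow-up limit. First, a framing point: for a single fixed $M$ the quantity $B(M,p)\,\dist(p,\UU^c)$ is automatically bounded (it is a continuous function on the compact set $\overline{M}\cap\overline{\UU}$ times a bounded factor), so the content of the theorem is uniformity of the constant over all surfaces satisfying the hypotheses; the contradiction must therefore be run on a \emph{sequence of surfaces} $M_n$, as the paper does, not on a sequence of points of one $M$. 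Second, and more seriously, you take an arbitrary sequence $p_j$ along which $B\cdot\dist\to\infty$ and rescale by $\lambda_j=B(M,p_j)$; nothing then prevents the rescaled surfaces from having unbounded second fundamental form arbitrarily close to the origin, so ``extract a subsequential smooth limit'' is unjustified. The paper's fix is the point-picking device: choose $p_n$ to \emph{maximize} $B(M_n,\cdot)\,\dist(\cdot,\UU^c)$, which by scale invariance yields the a priori bound $\limsup_n\sup\{B(M_n',p):\dist(p,0)<R\}\le 1$ (equation~\eqref{limsup}) on the rescaled surfaces; that bound is what makes smooth subconvergence (up to the rescaled boundary) legitimate. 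Your substitute justification --- elliptic boundary estimates ``since $M$ is a graph, hence has a well-defined Gauss map avoiding a hemisphere locally'' --- does not work: being a graph over $N$ gives no uniform gradient bound, and after dilation by $\lambda_j\to\infty$ the graphical structure provides no equicontinuity whatsoever.

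Two further points. The ``Bernstein-type theorem for stable minimal surfaces with a straight-line boundary'' that you invoke is false as stated: half of Enneper's surface is area-minimizing (hence stable), bounded by a line, and is not a halfplane. To conclude the limit is flat one needs the additional information the paper extracts, namely that the limit is area-minimizing with quadratic area growth (from Proposition~\ref{areaestimate0}) \emph{and} confined to a halfspace with its boundary line in the bounding plane; then P\'erez's theorem (or doubling plus Li's quadratic-area-growth theorem) applies and the halfspace confinement excludes Enneper. Finally, where you leave open the alternative that the rescaled boundaries escape to infinity, the paper rules this out cleanly: the Schoen stability inequality gives $B(M_n',0)\,\dist(0,\Gamma_n'\cup(\UU_n')^c)\le c_0$, and since $B(M_n',0)=1$ while $\dist(0,(\UU_n')^c)\to\infty$, the curves $\Gamma_n'$ stay at bounded distance, so the limit always lies in a halfspace with a line boundary. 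Your interior-case disposal via the stable Bernstein theorem would be fine if that case occurred, but the stability inequality shows it does not.
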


\begin{proof}[Proof of Proposition~\ref{areaestimate0}] 
To prove Statement~1, define
\begin{align*} 
&F: N\times \RR \to \RR \\
&F(x,z) = z - f(x).
\end{align*}
Note that the level sets of $F$ are vertical translates of $M$, that these level sets 
  foliate $N\times\RR$, and that $M$ is the level set $F=0$.
Now let $S$ be the least-area surface (flat chain mod $2$) in $K$ having the same boundary
as $M\cap K$.  Then $S$ is smooth except possibly at its boundary. 
Assume that $S$ is not $M\cap K$.
Then $F$ is nonzero, say positive, at some point of $S$.  Let $q$ be the point in $S$ at which 
   $F$ is a maximum.
Since $F=0$ on $\partial S$, $q$ is an interior point.  Thus $S$ lies below the minimal surface $F=F(q)$ but touches it at $q$.
By the maximum principle, the entire connected component  $S'$ of $S$ that contains $q$ must lie in the level set $F=F(q)$. 
   Note that $S'$ must have boundary points, since otherwise $S\setminus S'$ would have the same boundary as $S$ but less area.
However, $F=0$ on $\partial S$, a contradiction.   This completes the proof of Statement~1.

To prove Statement~2, note that
 that $(\partial K)\cap \{F<0\}$ and $(\partial K)\cap \{F\ge 0\}$ both have the same boundary as $M\cap K$,
and their areas add up to $\area(\partial K)$.  Thus
\begin{align*}
 \area(M\cap K) 
 &\le \min\{ \area((\partial K)\cap \{F<0\}), \area((\partial K)\cap\{F\ge 0\} ) 
 \\
 &\le \frac12\area(\partial K). 
\end{align*}
\end{proof}

\begin{proof}[Proof of Theorem~\ref{curvatureestimate0}]
Suppose that the theorem is false.   Then there is a sequence of examples $M_n$
satisfying the hypotheses of the theorem such that
\begin{equation}\label{blowup0}
   \sup_{p\in M_n\cap\,\, \overline{\UU}} B(M_n, p)\, \dist(p, \UU^c) \to \infty.
\end{equation}
Since $M_n\cap\,\, \overline{\UU}$ is compact (it is the graph of a smooth function over $\overline{U}$), the supremum
in \eqref{blowup0} is attained at some point $p_n\in M_n\cap\,\,\overline{\UU}$.   Thus:
\begin{equation}\label{choice}
 B(M_n, p) \, \dist(p, \UU^c) \le 
  B(M_n, p_n)\, \dist(p_n, \UU^c) \to \infty
  \end{equation}
for any  $p\in M_n\cap\,\,\overline{\UU}$.  By vertically translating each $M_n$, we may assume that the height of $p_n$ is $0$.
The assumption about bounds on the curvature of $\Gamma_n=\partial M_n$ imply that we can assume, by passing to 
a subsequence, that the $\Gamma_n$ converge in $C^{2,\alpha}$ to an embedded 
curve $\Gamma$.
(If $\partial N$ is connected, then of course each $\Gamma_n$ is connected.
But $\Gamma$ need not be connected because portions of $\Gamma_n$ may go off to infinity.)

Now translate $M_n$, $\UU$, and $N\times \RR$ by $-p_n$ and dilate by $B(M_n,p_n)$
to get $M_n'$, $\UU_n'$, and $N'\times \RR$.  Note that
\begin{equation}\label{B=1}
B(M_n', 0) = 1
\end{equation}
and, using \eqref{B=1}, the scale invariance of the product  $B(M_n, p)\, \dist(p, \UU^c)$,  and \eqref{choice}, 
\begin{align*}
\dist(0, (\UU_n')^c) &= B(M'_n, 0)\, \dist(0, (\UU_n')^c) 
\\
&=B(M_n, p_n)\, \dist(p_n, (\UU_n)^c) 
\\
&\to \infty.
\end{align*}
In particular, 
\begin{equation}\label{blownaway}
  \dist(0, (\UU_n')^c) \to \infty.
\end{equation}
Choose  $R<\dist(0,\partial \UU_n')$, and let $p$ be a point in $M'_n$ satisfying
$\dist(p,0)<R.$ 
(Here we are using the rescaled metric associated at the $n$th stage.)  For such a choice of $p$ we have from   \eqref{choice},   scale invariance and \eqref{B=1} and the triangle inequality:
\begin{equation*} \label{rescaled0}
\begin{aligned}
   B(M_n', p) 
   &\le
   B(M_n',0) \frac{\dist(0, (\UU_n')^c)}{\dist(p, (\UU_n')^c)} 
   \\
   &=
   \frac{\dist(0, (\UU_n')^c)}{\dist(p, (\UU_n')^c)}  \\
   &\le
   \frac{\dist(0, (\UU_n')^c)}{\dist(0, (\UU_n')^c) - \dist(p, 0) }.  
\end{aligned}
\end{equation*}
Now choose any  fixed $R>0$. By \eqref{blownaway}, we have $R<\dist(0,\partial \UU_n')$ for $n$ large enough.
Choose $p$ so that $\dist(p,\partial \UU_n')<R$. Then from the estimate above
\begin{equation*} \label{rescaled1}
\begin{aligned}
   B(M_n', p) 
   &\le
    \left( 1 - \frac{\dist(p,  0)}{\dist(0, (\UU_n')^c )} \right)^{-1} \\
   &\le 
   \left( 1 - \frac{R}{\dist(0, (\UU_n')^c )} \right)^{-1}.
\end{aligned}
\end{equation*}

This estimate is valid for any $R>0$ and $n$ sufficiently large.
 It follows  that
\begin{equation}\label{limsup}
   \limsup_{n\to\infty} \,( \sup\{B(M_n',p)\,:\, p\in M_n',\,\, \dist(p,0)<R\} ) \le 1.
\end{equation}

 Note that the dilation  factors $B(M_n, p_n)$ are diverging. Hence the metrics $B(M_n, p_n)g$ are becoming the flat metric. 
 The curvature estimate~\eqref{limsup} implies that (after passing to a  subsequence) the $M_n'$ converge
 smoothly to an area-minimizing surface $M'$ in  a flat Euclidean space $E$. Whether $E$ is all of $\RR ^3$ or not depends on what happens as $n\to\infty$ to $\partial N_n'\times\RR$. If $\dist(0, \partial N_n\times\RR)\to\infty$, then $E$ is  Euclidean three-space. If these distances are bounded, then $E$ is a flat halfspace bounded by   a plane corresponding to the limit (after passing to a further subsequence) of the boundaries
 $N_n'\times \RR$.    In the latter case, $\partial M' $ is a straight line lying in the plane
 $\partial E$.
  In either case, from  \eqref{limsup}  and \eqref{B=1}, we can assert that
 \begin{equation}\label{nonflat}
   \sup B(M', \cdot) = B(M', 0) = 1.
 \end{equation}

 \begin{claim} $E$ is  a halfspace,  and $M'\subset E$   is a properly embedded, simply connected area-minimizing minimal surface with quadratic area growth,   whose boundary  $\partial M'$ is a line in the plane $\partial E$.   \end{claim}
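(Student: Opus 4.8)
The plan is to show that $M'$ is a \emph{vertical graph} over the full horizontal slice of $E$; everything in the claim then follows, and in the case $E=\RR^3$ a contradiction with~\eqref{nonflat} falls out via Bernstein's theorem. Here $B_R$ denotes the ball of radius $R$ about $0$ in the rescaled metric at stage $n$ (equivalently, in the limit metric of $E$). We use that, by~\eqref{limsup} and~\eqref{B=1}, the $M_n'$ converge smoothly on compact subsets of $E$ to the smooth surface $M'$, with $B(M',\cdot)\le 1$ and $B(M',0)=1$, and that each $M_n'$ is the graph over $N_n'$ of a smooth function $f_n'$ in the $\RR$-factor direction; in particular no tangent plane of $M_n'$ is vertical, and the vertical translates of $M_n'$ foliate $N_n'\times\RR$.

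First I would record the quadratic area growth. Since $M_n'$ is a minimal graph, Proposition~\ref{areaestimate0} gives $\area(M_n'\cap K)\le\frac12\area(\partial K)$ for every compact mean convex region $K$ in the interior of $N_n'\times\RR$. Taking $K=\overline{B_R}$ when $\partial N_n'\times\RR$ is far from $0$, and otherwise the region cut from $\overline{B_R}$ by a slight inward push of the strictly mean convex hypersurface $\partial N_n'\times\RR$ (small enough to remain mean convex and to meet $M_n'$ only near $\Gamma_n':=\partial M_n'$), one gets $\area(M_n'\cap B_R)\le CR^2$ with $C$ independent of $n$, using that the rescaled metrics converge to the flat metric and that the pushed hypersurface converges to a flat plane. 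Letting $n\to\infty$ yields $\area(M'\cap B_R)\le CR^2$ for all $R$.

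Next I would show $M'$ is a graph. The foliation functions $F_n(x,z)=z-f_n'(x)$ on $N_n'\times\RR$ satisfy $\partial F_n/\partial z\equiv 1$. Let $\Omega'$ be the open set of base points near which $\sup_n|f_n'|$ is locally bounded; it contains the origin, where $f_n'$ vanishes. On $\Omega'$ the interior gradient estimate for the minimal surface equation, followed by elliptic estimates, gives local $C^\infty$ bounds on the $f_n'$, so after a further subsequence $f_n'\to f_\infty$ in $C^{\infty}_{\mathrm{loc}}(\Omega')$; then $F_n\to z-f_\infty(x)$ there, and $M'$ coincides with the graph of $f_\infty$ over $\Omega'$ and has no vertical tangent plane there. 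I would then prove $\Omega'$ exhausts the horizontal slice of $E$: if $x_0$ were an interior boundary point of $\Omega'$, then $|f_\infty|\to\infty$ as $x\to x_0$ within $\Omega'$, so $M'$ would have infinite area in a bounded vertical cylinder over a neighborhood of $x_0$, contradicting the quadratic bound. Hence $M'$ is the graph of $f_\infty$ over all of that slice.

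Finally the dichotomy and the remaining assertions. If $E=\RR^3$, then $M'$ is an entire minimal graph over $\RR^2$, hence a plane by Bernstein's theorem, so $B(M',\cdot)\equiv 0$, contradicting~\eqref{nonflat}; therefore $E$ is a halfspace, its horizontal slice is a halfplane $P$, and $M'$ is the graph of $f_\infty$ over $P$ --- so $M'$ is properly embedded in $E$ and simply connected, being homeomorphic to $P$. Area-minimality of $M'$ is inherited under smooth convergence from that of the $M_n'\cap K$. The boundary $\partial M'$ is the graph of $f_\infty$ over $\partial P$, which is the limit of the curves $\Gamma_n'=\partial M_n'$; since the curvature of $\Gamma_n'$ and its arclength derivative are at most $\kappa/B(M_n,p_n)$ and $\kappa/B(M_n,p_n)^2$, both tending to $0$, this limit curve has zero curvature and so is a straight line in $\partial E$. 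The main obstacle, I expect, is the third step: proving that the blow-up limit is a graph over the \emph{full} slice of $E$, which is exactly where the smooth convergence~\eqref{limsup}, the interior gradient estimate, and the quadratic area bound must be combined to rule out the surface going vertical or the graph being defined over only a proper subdomain.
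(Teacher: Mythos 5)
Your outline gets the quadratic area growth and the area-minimization right (both via Proposition~\ref{areaestimate0}, exactly as in the paper), but the core of your argument --- that $M'$ is a vertical graph over the \emph{entire} horizontal slice of $E$, so that Bernstein's theorem decides whether $E$ is a halfspace --- has a gap that I do not see how to close with the tools you invoke. A sequence of minimal graphs $f_n'$ over expanding domains need not converge to an entire graph: the $f_n'$ can diverge to $\pm\infty$ on part of the domain, in which case the geometric limit near the origin is a complete minimal graph over a proper subdomain $\Omega'$ with infinite boundary values on $\partial\Omega'$ (Scherk-type behavior). Your attempt to exclude this with the area bound does not work: such a surface has locally finite area (so there is no ``infinite area in a bounded vertical cylinder''), and its area in $B_R$ grows like the area of the base plus roughly $\operatorname{length}(\partial\Omega'\cap B_R)\cdot R$, which is still $O(R^2)$ and hence consistent with the quadratic bound. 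Since Bernstein's theorem says nothing about minimal graphs over proper subdomains, the dichotomy at the end of your proof is not established; nor do you rule out the case in which $E$ is a halfspace but the boundary curves $\Gamma_n'$ escape to infinity (vertically, say), leaving $M'$ with empty boundary.

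The paper's proof avoids all of this with one estimate you never use: stability. Each $M_n'$ is stable (its vertical translates foliate $N_n'\times\RR$), so Schoen's curvature estimate gives $B(M_n',0)\,\dist(0,\Gamma_n'\cup(\UU_n')^c)\le c_0$. Since $B(M_n',0)=1$ by~\eqref{B=1} and $\dist(0,(\UU_n')^c)\to\infty$ by~\eqref{blownaway}, this forces $\dist(0,\Gamma_n')\le c_0$ for large $n$. Hence the boundaries cannot escape: after passing to a subsequence, $\Gamma_n'$ converges (using the rescaled curvature bounds, as you correctly note) to a straight line $\Gamma'$, the walls $\partial N_n'\times\RR$ converge to the boundary plane of a halfspace $E$, and $\partial M'=\Gamma'\subset\partial E$ --- all in one stroke, with no need to show that $M'$ is an entire graph. (The paper then concludes not via Bernstein but via P\'erez's theorem that a properly embedded, simply connected, area-minimizing surface with quadratic area growth whose boundary is a line must be a halfplane or half of Enneper's surface, the latter being excluded because $M'$ lies in a halfspace.) If you want to salvage your approach, the missing ingredient is precisely this stability estimate; without it, the assertion that $E$ is a halfspace is not proved.
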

 
\begin{proof}[Proof of Claim.]  Each $M_n'$ is a graph. Hence $M'$ is simply connected and properly embedded in $E$. Recall that $M_n$ is stable in $N\times \RR$. Hence $M_n'$ is stable in $N_n'\times\RR$. Stability gives us the estimate
\[
   B(M_n',0) \dist(0, \Gamma_n' \cup (\UU_n')^c)  < c_0
\]
for some constant $c_0$ independent of $n$. Therefore, 
\[
   \dist(0, \Gamma_n' \cup (\UU_n')^c) < c_0
 \]
 since $B(M'_n,0)=1$.  Thus by~\eqref{blownaway},
 \[
  \limsup_{n\to\infty} \dist(0, \Gamma_n') < c.
 \]
It follows that (after passing to a subsequence) the  $\Gamma_n'$ converge smoothly to
 a straight line $\Gamma'$ and that $\partial N_n'\times \RR$ converges smoothly to a limit $E$ that
 is isometric to a closed halfspace of $\RR^3$.  The  boundary  of $E$ contains the line $\Gamma'$.

Observe that if $q\in \partial E$, then it follows from Statement~2 of Proposition~\ref{areaestimate0}
  \[
    \area(M' \cap \BB(q,r)) \le \frac12 \area( \partial (\BB(q,r)\cap E)) = 3\pi r^2.
\]
Thus $M'$ has quadratic area growth.   It follows from Statement~1 of that same proposition that $M'$ is area minimizing.

There are several ways to see that $M'$ must be halfplane, contradicting the fact that
$B(M',0)=1$, which was established in\eqref{nonflat}. Here is one way.  A properly embedded, area-minimizing minimal surface with quadratic area growth
 must be a halfplane or half of Enneper's surface. This was conjectured
 by one of us (White,\cite{white-enneper}) and proved by P\'erez \cite{Perez}. (Here, area-minimizing is used in the classical sense.  
 That is, the allowed comparison surfaces are obtained by compactly supported deformations that vanish on the boundary.)
 According to the Claim above, $M'$ satisfies all the hypotheses, so it must be either a halfplane or half of Enneper's surface.  But $M'$ lies in a halfspace, and half of Enneper's surface does not.  So $M'$ is a halfplane.

 Here is another way to see that $M'$ is a halfplane. Double $M'$ by  Schwartz reflection about its boundary line to produce a complete, simply connected, embedded, minimal surface. As established in the Claim above, $M'$, has quadratic area growth in $\RR^3$, so the same is true for its double. But finite topology  together with quadratic area growth was shown by  P. Li \cite{Li}(see Proposition~32 in \cite{white-lectures})  to imply finite total curvature, and it is well known that the only complete, simply connected, embedded minimal surface of finite total curvature is the plane. 
\end{proof} \end{proof}
\marginpar{End proof of theorem?  Explain why it follows from the claim?}

\section{Hyperbolic catenoids} \label{hyperbolic-ribbons}
\newcommand{\Cone}{\operatorname{Cone}}
\newcommand{\cone}{\operatorname{cone}}
Consider the hyperbolic metric on the upper halfspace:
\[
 \frac{dx^2 + dy^2 + dz^2}{z^2}.
\]
Let $r=\sqrt{x^2+y^2}$ and $R=\sqrt{x^2+y^2+z^2}$.
Let $\theta$ be the angle that the vector $(x,y,z)$ makes with the horizontal:
\[
  \theta = \arcsin \frac{z}{R} \in [0,\pi/2].
\]

The hemispheres $\{R=\textnormal{constant}\}$ are totally geodesic surfaces of revolution about 
$Z=\{x=y=0\}=\{\theta=\pi/2\}$.  For $\alpha\in (0,\pi/2)$,  the surfaces  
\begin{equation}\label{cone}
{\rm cone}(\alpha):=\{\theta=\alpha\}
\end{equation}
 are surfaces of revolution about $Z$ orthogonal to  the hemispheres.   
 
  The hyperbolic distance $s$ from $Z$ of a point $(x,y,z)\in {\rm cone}(\alpha)$
to $Z$ is given by the following, where $R^2=x^2+y^2+z^2$:
\begin{equation} \label{cone-distance}
s=\int_\alpha^{\pi/2}\frac{Rd\theta}{z} =\int_\alpha^{\pi/2}\frac{d\theta}{\sin\theta} =|\ln\tan(\alpha/2)|,
\end{equation}
From this, we see that ${\rm cone} (\alpha)$ is the set of points at  
constant hyperbolic distance $s$ from $Z$.

If we define $\Cyl(s)$ to be the points at hyperbolic distance equal to or  less than $s$ from $Z$, then
\begin{equation} \label{cones-cylinders}
\begin{aligned}
    \Cyl(s)  &= \{(x,y,z): \pi/2>\theta(x,y,z) \ge \alpha\}, 
    \\
    \partial \Cyl(s) &=  {\rm cone}(\alpha),
\end{aligned}
\end{equation}
where $\alpha$ and $s$ are related by \eqref{cone-distance}. 

 In general, 
consider a surface $\Sigma$ of revolution about $Z$. It can be expressed as
\begin{align*}
R=R(\tau),\quad
\theta=\theta(\tau),\quad
\tau\in I
\end{align*}
where $I\subset \RR$ is some interval.
Since the Euclidean distance to $Z$ is $R\sin\theta$, the
 Euclidean area of an infinitesimal ribbon of $\Sigma$
is given by 
\[
  2\pi R \cos\theta \sqrt{dR^2 + R^2\,d\theta^2}.
\]
Therefore the hyperbolic area of that ribbon is
\begin{align*}
 \frac{ 2\pi R\cos\theta \sqrt{dR^2 + R^2\,d\theta^2} }{z^2} 
 &=
 \frac{2\pi R^2\cos\theta}{z^2} \sqrt{(dR/R)^2 + d\theta^2}
 \\
 &= \frac{2\pi \cos\theta}{\sin^2\theta} \sqrt{ dt^2 + d\theta^2},
\end{align*}
where $t = \log R$ (so $R=e^t$).  Here we have used $z=R\sin\theta$.

Consequently, we see that a surface rotationally invariant about $Z$
is a minimal surface if and only if the corresponding curve in
\[
  \{ (\theta,t)\in (0,\pi/2] \times \RR \}
\]
is a geodesic with respect to the metric
\begin{equation} \label{eq:metric}
   \frac{2\pi \cos\theta}{\sin^2\theta} \sqrt{ dt^2 + d\theta^2}.
\end{equation}

Now suppose we have a geodesic given by
\[
  t = t(\theta), \quad \theta\in I
\]
where $I\subset (0,\pi/2]$ is an interval.  Then the length is
\[
   \int_{\theta\in I}  \frac{2\pi \cos\theta}{\sin^2\theta} \sqrt{ dt^2 + d\theta^2}
   =
   \int_{\theta\in I}  \frac{2\pi \cos\theta}{\sin^2\theta} \sqrt{ t'(\theta)^2 + 1}\,d\theta.
\]
Since the integrand does not depend on $t$, the Euler-Lagrange equation
for this functional (i.e., the equation for a geodesic) is
\[
\frac{d}{d\theta}
\left(
 \frac{2\pi \cos\theta}{\sin^2\theta} \frac{t'(\theta)}{\sqrt{ t'(\theta)^2 + 1}}
\right) = 0
\]
or
\begin{equation}
\frac{2\pi \cos\theta}{\sin^2\theta} \frac{t'(\theta)}{\sqrt{ t'(\theta)^2 + 1}} = c \label{E-L-eqn}
\end{equation}
for some constant $c$.

 From \eqref{E-L-eqn} we have the following result:

\begin{theorem}\label{very-close-theorem}
For $\theta$ near $0$, 

\begin{equation}\label{t-prime-estimate}
   t'(\theta) = O(\theta^2)
\end{equation}
and therefore 
\begin{equation}
  t(\theta)-t(0) = O(\theta^3).
\end{equation}
\end{theorem}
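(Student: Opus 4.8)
The plan is to solve the first integral \eqref{E-L-eqn} explicitly for $t'(\theta)$ and then read off the behavior as $\theta\to 0^+$. Abbreviate $f(\theta)=\dfrac{2\pi\cos\theta}{\sin^2\theta}$, so that \eqref{E-L-eqn} becomes $f(\theta)\,t'(\theta)/\sqrt{t'(\theta)^2+1}=c$. The function $u\mapsto u/\sqrt{u^2+1}$ is a bijection of $\RR$ onto $(-1,1)$, and wherever the profile curve is a graph $t=t(\theta)$ the value $t'(\theta)$ is finite, forcing $|c|/f(\theta)<1$ and hence $f(\theta)>|c|$ on the whole interval. I can therefore invert the relation to obtain
\[
   t'(\theta)^2=\frac{c^2}{f(\theta)^2-c^2},
\]
equivalently $|t'(\theta)|=|c|/\sqrt{f(\theta)^2-c^2}$. (If $c=0$ the curve is a totally geodesic hemisphere and both assertions are trivial, so one may assume $c\neq 0$.)

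Next I would carry out the elementary expansion near $\theta=0$. Since $\cos\theta=1+O(\theta^2)$ and $\sin^2\theta=\theta^2\,(1+O(\theta^2))$, one has $f(\theta)=\dfrac{2\pi}{\theta^2}\,(1+O(\theta^2))$; in particular $f(\theta)\to\infty$, so $1/f(\theta)=\dfrac{\theta^2}{2\pi}\,(1+O(\theta^2))$ and $\sqrt{f(\theta)^2-c^2}=f(\theta)\sqrt{1-c^2/f(\theta)^2}=f(\theta)\,(1+O(\theta^4))$. Substituting gives
\[
   |t'(\theta)|=\frac{|c|}{f(\theta)}\,(1+O(\theta^4))=\frac{|c|}{2\pi}\,\theta^2+O(\theta^4),
\]
which is \eqref{t-prime-estimate}. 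Because $t'(\theta)=O(\theta^2)$ is integrable on $(0,\theta_0)$, the limit $t(0):=\lim_{\theta\to0^+}t(\theta)$ exists, and then $t(\theta)-t(0)=\int_0^\theta t'(s)\,ds=O(\theta^3)$ by integrating the bound.

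I do not expect a genuine obstacle here; the only points deserving a sentence of care are (a) confirming that the constant $c$ coming from \eqref{E-L-eqn} is finite and that the profile curve really does limit onto the ideal boundary $\theta=0$, so that ``$t(0)$'' and ``$\theta$ near $0$'' are meaningful — for a half-catenoid this is part of the given geometric setup, and the self-improving estimate $|t'|\to 0$ obtained above is consistent with it — and (b) tracking error terms carefully enough to see that the corrections are $O(\theta^4)$ rather than merely $O(\theta^2)$, which is immediate from $c^2/f(\theta)^2=O(\theta^4)$. Everything else is a one-line computation once the closed form for $|t'(\theta)|$ is in hand.
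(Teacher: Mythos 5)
Your proposal is correct and is exactly the computation the paper intends: the authors state Theorem~\ref{very-close-theorem} as an immediate consequence of the first integral \eqref{E-L-eqn} without writing out the algebra, and your derivation of $t'(\theta)^2 = c^2/(f(\theta)^2-c^2)$ with $f(\theta)=2\pi\cos\theta/\sin^2\theta \sim 2\pi/\theta^2$ supplies precisely the missing details, including the correct handling of the degenerate case $c=0$ and the integration to get the $O(\theta^3)$ bound.
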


\begin{corollary} \label{length-I-theta}
Consider two geodesics in $(0,\pi/2]\times\RR$ converging to the same ideal
boundary point.  The vertical distance between them tends to $0$ as $\theta\to 0$.
That is, if $t_1(\cdot)$ and $t_2(\cdot)$ are two solutions of the Euler-Lagrange
equation with $t_1(0)=t_2(0)$, and if $I(\theta)$ is the vertical segment joining
$(\theta, t_1(\theta))$ and $(\theta, t_2(\theta))$, then the length of $I(\theta)$ 
(with respect to the metric~\eqref{eq:metric})
 is $O(\theta)$
as $\theta\to 0$.
\end{corollary}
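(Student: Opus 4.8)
The plan is to reduce the corollary to the cubic estimate already established in Theorem~\ref{very-close-theorem}, after which only an elementary computation remains. First I would use the fact that $I(\theta)$ is a \emph{vertical} segment: it lies in the line $\{\theta=\mathrm{const}\}$, so $d\theta=0$ along $I(\theta)$ and the metric~\eqref{eq:metric} restricts there to $\tfrac{2\pi\cos\theta}{\sin^2\theta}\,|dt|$. Hence the length is given exactly by
\[
   \Length(I(\theta)) \;=\; \frac{2\pi\cos\theta}{\sin^2\theta}\,\bigl|t_2(\theta) - t_1(\theta)\bigr|.
\]

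Next I would apply Theorem~\ref{very-close-theorem} separately to the two solutions $t_1$ and $t_2$ of the Euler--Lagrange equation~\eqref{E-L-eqn}, which gives $t_i(\theta) - t_i(0) = O(\theta^3)$ as $\theta\to 0$ for $i=1,2$. Subtracting these two estimates and invoking the hypothesis $t_1(0)=t_2(0)$ yields
\[
   \bigl|t_2(\theta) - t_1(\theta)\bigr|
   \;=\; \bigl|\,(t_2(\theta)-t_2(0)) - (t_1(\theta)-t_1(0))\,\bigr|
   \;=\; O(\theta^3).
\]

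Finally, since $\cos\theta\to 1$ and $\sin\theta = \theta + O(\theta^3)$ as $\theta\to 0$, the prefactor satisfies $\tfrac{\cos\theta}{\sin^2\theta} = O(\theta^{-2})$, so combining the two displays gives $\Length(I(\theta)) = O(\theta^{-2})\cdot O(\theta^3) = O(\theta)$, which is the assertion. I expect no genuine obstacle here --- all of the analytic work is contained in Theorem~\ref{very-close-theorem} --- the only point deserving a word of care being that $t_1$ and $t_2$ are distinct geodesics and hence come with a priori different implied constants in their respective cubic estimates; but since we only claim an $O(\theta)$ bound, with a constant depending on the two fixed geodesics, this causes no difficulty.
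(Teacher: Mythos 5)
Your proposal is correct and follows essentially the same route as the paper: compute the length of the vertical segment as the conformal factor times $|t_2(\theta)-t_1(\theta)|$, invoke the cubic estimate of Theorem~\ref{very-close-theorem}, and observe that the $O(\theta^{-2})$ prefactor leaves an $O(\theta)$ bound. (The paper's only cosmetic difference is that it normalizes $t_2$ to be the horizontal geodesic $t_2\equiv t(0)$ before applying the estimate, whereas you apply the estimate to both solutions and use the triangle inequality; the content is identical.)
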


\newcommand{\length}{\operatorname{length}}
\begin{proof}
We can let $t_1(\cdot)$ be any solution $t(\cdot)$, and we 
may as well take $t_2(\theta)$ to be the horizontal geodesic $t_2(\theta)\equiv t(0)$.
Now
\begin{align*}
\length(I(\theta))
&=
\frac{2\pi \cos\theta}{\sin^2\theta} \length_\textnormal{eucl}I(\theta) 
\\
&= 
\frac{2\pi \cos\theta}{\sin^2\theta} | t(\theta)-t(0)|
\\
&=
\frac{2\pi \cos\theta}{\sin^2\theta} O(\theta^3),
\end{align*}
which is clearly $O(\theta)$.
\end{proof}
\begin{remark}\label{ribbon-theorem-proof} The length of $I(\theta)$ equals the area of the ribbon  on 
$\cone(\theta)$ between the rotational minimal surfaces that correspond to the two
geodesics converging to the same ideal-boundary point. By \eqref{cones-cylinders},
$\cone(\theta)=\partial\Cyl(s)$, and by   \eqref{cone-distance},  $s\rightarrow\infty$ if and only if   $\theta\to 0$. Therefore, Theorem~\ref{ribbon-theorem}  follows from Corollary~\ref{length-I-theta}.
 \end{remark}

We now compute the curvature of the Riemannian metric~\eqref{eq:metric}.

\begin{lemma}\label{Curvature}
Let $\lambda=\lambda(\theta,t)=2\pi \cos\theta/ \sin^2\theta$.
The Gauss curvature $K$ of the metric 
 $\lambda\sqrt{d\theta^2+dt^2}$ on the strip $(\theta,t)\in (0,\pi/2)\times\RR$ is
 given by
 \[
   K = K(\theta) = \frac1{4\pi^2}\,\tan^2\theta \left( \tan^2\theta - 2 \right).
 \]
 In particular, $K\ge 0$ if and only if $\theta\ge \alpha_0:=\arctan\sqrt2$.
 \end{lemma}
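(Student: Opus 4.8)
The plan is to apply the classical formula for the Gauss curvature of a conformally flat metric. Writing the metric in the form $\lambda^2(d\theta^2 + dt^2)$ would be the usual convention; here the metric is $\lambda\sqrt{d\theta^2+dt^2}$, so the conformal factor in the standard sense is $\lambda$ itself (not $\lambda^2$). Concretely, if a metric on a planar domain has the form $e^{2\phi}(d\theta^2+dt^2)$, then its Gauss curvature is $K = -e^{-2\phi}\,\Delta\phi$, where $\Delta = \partial_\theta^2 + \partial_t^2$ is the flat Laplacian. So the first step is to set $e^{2\phi} = \lambda$, i.e. $\phi = \tfrac12\log\lambda = \tfrac12\log(2\pi) + \tfrac12\log\cos\theta - \log\sin\theta$. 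Since $\lambda$ depends only on $\theta$, we have $\Delta\phi = \phi''(\theta)$, and the computation reduces to a single-variable exercise.

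The second step is the routine differentiation. From $\phi(\theta) = \tfrac12\log(2\pi) + \tfrac12\log\cos\theta - \log\sin\theta$ we get $\phi'(\theta) = -\tfrac12\tan\theta - \cot\theta$, and then $\phi''(\theta) = -\tfrac12\sec^2\theta + \csc^2\theta$. Therefore
\[
   K = -\frac{1}{\lambda}\,\phi''(\theta) = -\frac{\sin^2\theta}{2\pi\cos\theta}\left(-\frac{1}{2\cos^2\theta} + \frac{1}{\sin^2\theta}\right) = \frac{\sin^2\theta}{2\pi\cos\theta}\left(\frac{1}{2\cos^2\theta} - \frac{1}{\sin^2\theta}\right).
\]
Putting the bracket over a common denominator $2\sin^2\theta\cos^2\theta$ gives $\frac{\sin^2\theta - 2\cos^2\theta}{2\sin^2\theta\cos^2\theta}$, so
\[
   K = \frac{\sin^2\theta}{2\pi\cos\theta}\cdot\frac{\sin^2\theta - 2\cos^2\theta}{2\sin^2\theta\cos^2\theta} = \frac{\sin^2\theta - 2\cos^2\theta}{4\pi\cos^3\theta}\cdot\frac{1}{1}.
\]
It remains to check this equals $\frac{1}{4\pi^2}\tan^2\theta(\tan^2\theta - 2)$. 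Multiply and divide by $\cos^4\theta$ in the expression $\tan^2\theta(\tan^2\theta-2) = \frac{\sin^2\theta}{\cos^2\theta}\cdot\frac{\sin^2\theta - 2\cos^2\theta}{\cos^2\theta} = \frac{\sin^2\theta(\sin^2\theta - 2\cos^2\theta)}{\cos^4\theta}$; so I must reconcile the powers of $\cos\theta$ and the factor of $\pi$ versus $\pi^2$. This suggests the intended normalization is actually $\lambda = 2\pi\cos\theta/\sin^2\theta$ with the metric $\lambda^2(d\theta^2+dt^2)$ in the conformal-factor sense used for the stated formula — I would recheck against Remark~\ref{ribbon-theorem-proof} and the length computations in Corollary~\ref{length-I-theta} to pin down the exact convention, then carry the constant through. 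Either way, the structural content — that $K$ is a function of $\theta$ alone, obtained by a one-variable second derivative of $\log\lambda$ — is unchanged.

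The final step is the sign analysis: since $\cos\theta > 0$ and $\tan^2\theta > 0$ on $(0,\pi/2)$, the sign of $K$ is the sign of $\tan^2\theta - 2$, which is nonnegative precisely when $\tan^2\theta \ge 2$, i.e. $\tan\theta \ge \sqrt2$, i.e. $\theta \ge \alpha_0 := \arctan\sqrt2$. The main (and only real) obstacle here is bookkeeping: getting the conformal-factor convention and the resulting numerical constant exactly right, since a metric written as $\lambda\sqrt{d\theta^2+dt^2}$ rather than $\lambda^2(d\theta^2+dt^2)$ is an unusual presentation and it is easy to be off by a factor in $\lambda$ or in $\pi$. No conceptual difficulty is expected.
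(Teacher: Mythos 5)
Your approach---reduce to the standard conformal-factor formula and compute a one-variable second derivative of $\log\lambda$---is exactly the paper's, but the proof as written does not arrive at the stated formula, and the reason is the point you yourself flag at the end: the convention. The expression $\lambda\sqrt{d\theta^2+dt^2}$ is a line element $ds$, not a quadratic form, so the Riemannian metric tensor is $\lambda^2(d\theta^2+dt^2)$; in your notation $e^{2\phi}=\lambda^2$, i.e.\ $\phi=\log\lambda$, not $\tfrac12\log\lambda$. (This reading is forced by the way the metric arises in the appendix: the weighted length $\int\lambda\sqrt{dt^2+d\theta^2}$ is the area of the surface of revolution, so $\lambda$ multiplies lengths and the quadratic form carries $\lambda^2$.) With the correct $\phi$, the formula is $K=-\lambda^{-2}(\ln\lambda)''$, and your own computation $(\ln\lambda)''=-\sec^2\theta+2\csc^2\theta$ then gives
\[
K=\frac{\sin^4\theta}{4\pi^2\cos^2\theta}\left(\frac1{\cos^2\theta}-\frac2{\sin^2\theta}\right)=\frac1{4\pi^2}\,\tan^2\theta\left(\tan^2\theta-2\right),
\]
exactly as claimed; this is precisely the paper's proof. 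As submitted, your argument computes the curvature of the different metric $\lambda\,(d\theta^2+dt^2)$, obtains $(\sin^2\theta-2\cos^2\theta)/(4\pi\cos^3\theta)$, notices the mismatch, and stops with a promise to recheck the convention, which leaves the stated identity unproved. The gap is pure bookkeeping and is closed by the one line above. Note also that the final sign statement---$K\ge0$ if and only if $\theta\ge\arctan\sqrt2$, which is the only part of the lemma used later, in the Gauss--Bonnet argument that two such geodesics cross at most once---is insensitive to the convention, since under either reading the sign of $K$ is the sign of $\tan^2\theta-2$.
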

 
\begin{proof}  We use the following formula for the Gauss curvature of a surface with a conformal metric 
$\lambda \sqrt{d\theta^2+dt^2}$:
\begin{equation}\label{K}
K=\frac{-\Delta\ln \lambda}{\lambda^2}.
\end{equation} 
We compute
 \begin{align*}
(\ln \lambda)'  &= -\tan\theta + 2\cot\theta \\
(\ln \lambda)'' &= -\frac1{\cos^2\theta} + \frac2{\sin^2\theta}.
\end{align*}
Thus
\[
K 
= 
\frac{\sin^4\theta}{4\pi^2 \cos^2\theta} \left( \frac1{\cos^2\theta}  - \frac2{\sin^2\theta} \right)
=
\frac1{4\pi^2} \tan^2\theta \left( \tan^2\theta - 2 \right).
\]
\end{proof}

\begin{proposition} Let $C$ and $C'$ be minimal annuli of rotation  
with a common axis $Z$ in hyperbolic thee-space.
Suppose that both of these annuli lie outside the cylinder ${\bf Cyl}( \ln \tan(\alpha_0/2))$,
as defined in~\eqref{cones-cylinders}. Then
 Then $C$ and $C'$ can intersect in at most one circle. 
 In particular, no two such annuli have the same boundary.
 \end{proposition}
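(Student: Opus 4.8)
The plan is to work in the $(\theta,t)$-picture of Appendix~\ref{hyperbolic-ribbons}, where $t=\log R$ and, as recorded there, a surface of revolution about $Z$ is minimal exactly when its profile curve in the strip $(0,\pi/2]\times\RR$ is a geodesic of the conformal metric $\lambda\sqrt{d\theta^2+dt^2}$ with $\lambda=2\pi\cos\theta/\sin^2\theta$; see~\eqref{eq:metric}. Under this dictionary $C$ and $C'$ correspond to geodesics $\gamma$ and $\gamma'$ (embedded, since $C,C'$ are embedded surfaces of revolution), and a circle of intersection of $C$ and $C'$ (a rotationally invariant circle $\{R=\text{const},\ \theta=\text{const}\}$) corresponds to a point of $\gamma\cap\gamma'$. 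By~\eqref{cone-distance}--\eqref{cones-cylinders} the hypothesis that $C,C'$ lie outside $\mathbf{Cyl}(\ln\tan(\alpha_0/2))$ says precisely that $\gamma$ and $\gamma'$ lie in the open half-strip $\{\theta<\alpha_0\}$, where $\alpha_0=\arctan\sqrt2$, and by Lemma~\ref{Curvature} the Gauss curvature of~\eqref{eq:metric} is strictly negative there. I would also record that the vertical line $\ell:=\{\theta=\alpha_0\}$ is itself a geodesic of~\eqref{eq:metric}: since $\lambda$ depends on $\theta$ only, the geodesic curvature of $\ell$ is proportional to $(\ln\lambda)'(\alpha_0)=-\tan\alpha_0+2\cot\alpha_0$, which vanishes by the computation in the proof of Lemma~\ref{Curvature}. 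So the whole statement reduces to the following: two distinct geodesics of~\eqref{eq:metric} that both lie in $\{\theta<\alpha_0\}$ meet in at most one point and cannot share their pair of ideal endpoints.

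For the first assertion, suppose $\gamma\ne\gamma'$ met at two points $p$ and $q$, and let $B$ be the region bounded by the subarcs of $\gamma$ and $\gamma'$ joining $p$ to $q$; since the strip is simply connected and the arcs are embedded, $\overline B$ is a compact topological disk whose boundary is piecewise geodesic with exactly two corners. The key point is that $\overline B\subset\{\theta<\alpha_0\}$: the set $\partial B$ is compact and contained in $\{\theta<\alpha_0\}$, hence disjoint from $\ell$, so if $\overline B$ met $\ell$ then, following the complete (noncompact) geodesic $\ell$ from such a point, $\ell$ would have to exit the compact set $\overline B$ and thus cross $\partial B$, which is impossible. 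Hence $K<0$ on $B$. Since $\gamma$ and $\gamma'$ are distinct geodesics they are not tangent at $p$ or $q$ (two geodesics through a point with a common tangent coincide), so the interior angles $\beta_1,\beta_2$ of $B$ lie in $(0,\pi)$, and Gauss--Bonnet for the disk $B$ gives
\[
  \int_B K\,dA + (\pi-\beta_1)+(\pi-\beta_2)=2\pi,
\]
so $\int_B K\,dA=\beta_1+\beta_2>0$, contradicting $\int_B K\,dA<0$. Thus $C\cap C'$ is at most one circle.

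For the last sentence I would argue by contradiction: if $C\ne C'$ had the same ideal boundary, then (using Theorem~\ref{very-close-theorem} to see that the profile curves have well-defined limits on $\{\theta=0\}$) the geodesics $\gamma,\gamma'$ would share both of their ideal endpoints. By the first part $\gamma\cap\gamma'$ is empty or a single point $p$. In the first case $\gamma\cup\gamma'$ bounds an ideal bigon $B$ (two ideal corners, no finite corner); in the second, the subarcs of $\gamma$ and $\gamma'$ from $p$ to one of the common ideal endpoints bound a region $B$ with one finite corner at $p$ and one ideal corner. The same barrier reasoning as above — now using that $\ell$ has both of its ideal endpoints at $\theta=\alpha_0\ne0$, so it meets neither $\partial B$ nor the ideal boundary of $B$ — confines $B$ to $\{\theta<\alpha_0\}$, so $K<0$ on $B$ and $\area(B)>0$. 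Applying Gauss--Bonnet to a compact exhaustion of $B$ (each ideal corner contributing exterior angle tending to $\pi$) then gives $\int_B K\,dA=0$ in the first case and $\int_B K\,dA=\beta\in(0,\pi)$, the interior angle at $p$, in the second; either way this contradicts $\int_B K\,dA<0$.

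I expect the only genuine difficulty to be the noncompact regions in the last paragraph: confining an ideal or half-ideal bigon to $\{\theta<\alpha_0\}$ and justifying the limiting form of Gauss--Bonnet there. The compact case in the main assertion is clean precisely because of the barrier provided by the geodesic $\ell=\{\theta=\alpha_0\}$ together with the sign of $K$ from Lemma~\ref{Curvature}.
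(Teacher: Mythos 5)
Your argument is correct and is essentially the paper's proof: the paper simply invokes Lemma~\ref{Curvature} (the metric~\eqref{eq:metric} has negative curvature on $\{\theta<\alpha_0\}$) together with the fact that two distinct geodesics on a negatively curved, simply connected surface cross at most once by Gauss--Bonnet, and you have supplied the details the paper leaves implicit (confinement of the bigon to $\{\theta<\alpha_0\}$, transversality at the corners). The only divergence is in the final sentence, where the paper intends the immediate consequence that two distinct annuli with the same boundary circles would already meet in two circles, contradicting the first assertion; your ideal-bigon analysis proves the stronger statement about shared ideal boundaries, which is more than the statement requires but not incorrect.
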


Here,  $\alpha_0=\arctan\sqrt{2}$ as in Lemma~\ref{Curvature} above. The proposition follows immediately from  Lemma~\ref{Curvature}  and the observation that on a surface of negative curvature, two distinct geodesics cannot cross more than once, a simple consequence of the Gauss-Bonnet formula. (By construction, geodesics in the strip
$(0,\pi/2)\times \RR$ correspond to minimal annuli of rotation in hyperbolic three-space.)

\newcommand{\hide}[1]{}
\begin{bibdiv}

\begin{biblist}

\bib{allard-boundary}{article}{
   author={Allard, William K.},
   title={On the first variation of a varifold: boundary behavior},
   journal={Ann. of Math. (2)},
   volume={101},
   date={1975},
   pages={418--446},
   issn={0003-486X},
   review={\MR{0397520 (53 \#1379)}},
}

 \bib{bernstein-tinaglia}{article}{
   author={Bernstein, Jacob},
   author={Tinaglia, Giuseppe},
   title={Topological type of limit laminations of embedded minimal disks},
   journal={J. Differential Geom.},
   volume={102},
   date={2016},
   number={1},
   pages={1--23},
   issn={0022-040X},
   review={\MR{3447084}},
}

\bib{CM-proper-nonproper}{article}{
   author={Colding, Tobias H.},
   author={Minicozzi, William P., II},
     TITLE = {Embedded minimal disks: proper versus nonproper---global
              versus local},
   JOURNAL = {Trans. Amer. Math. Soc.},
  FJOURNAL = {Transactions of the American Mathematical Society},
    VOLUME = {356},
      YEAR = {2004},
    NUMBER = {1},
     PAGES = {283--289 (electronic)},
      ISSN = {0002-9947},
}

\bib{CM-CY}{article}{
   author={Colding, Tobias H.},
   author={Minicozzi, William P., II},    
     TITLE = {The {C}alabi-{Y}au conjectures for embedded surfaces},
   JOURNAL = {Ann. of Math. (2)},
  FJOURNAL = {Annals of Mathematics. Second Series},
    VOLUME = {167},
      YEAR = {2008},
    NUMBER = {1},
     PAGES = {211--243},
     }

\bib{CM3}{article}{
   author={Colding, Tobias H.},
   author={Minicozzi, William P., II},
   title={The space of embedded minimal surfaces of fixed genus in a
   3-manifold. III. Planar domains},
   journal={Ann. of Math. (2)},
   volume={160},
   date={2004},
   number={2},
   pages={523--572},
   issn={0003-486X},
   review={\MR{2123932}},
   doi={10.4007/annals.2004.160.523},
}

\bib{CM4}{article}{
   author={Colding, Tobias H.},
   author={Minicozzi, William P., II},
   title={The space of embedded minimal surfaces of fixed genus in a
   3-manifold. IV. Locally simply connected},
   journal={Ann. of Math. (2)},
   volume={160},
   date={2004},
   number={2},
   pages={573--615},
   issn={0003-486X},
   review={\MR{2123933}},
   doi={10.4007/annals.2004.160.573},
}

\bib{collin-rosenberg-harmonic}{article}{
   author={Collin, Pascal},
   author={Rosenberg, Harold},
   title={Construction of harmonic diffeomorphisms and minimal graphs},
   journal={Ann. of Math. (2)},
   volume={172},
   date={2010},
   number={3},
   pages={1879--1906},
   issn={0003-486X},
   review={\MR{2726102 (2011i:53004)}},
   doi={10.4007/annals.2010.172.1879},
}

\bib{BarisC}{article}{
    author = {Coskunuzer, Baris},
     TITLE = {Non-properly embedded minimal planes in hyperbolic 3-space},
   JOURNAL = {Commun. Contemp. Math.},
  FJOURNAL = {Communications in Contemporary Mathematics},
    VOLUME = {13},
      YEAR = {2011},
    NUMBER = {5},
     PAGES = {727--739},
     }

	\bib{HardtLin}{article}{
    author={Hardt, Robert},
    author= {Lin, Fang-Hua},
     TITLE = {Regularity at infinity for area-minimizing hypersurfaces in
              hyperbolic space},
   JOURNAL = {Invent. Math.},
  FJOURNAL = {Inventiones Mathematicae},
    VOLUME = {88},
      YEAR = {1987},
    NUMBER = {1},
     PAGES = {217--224},
      ISSN = {0020-9910},
     CODEN = {INVMBH},
   MRCLASS = {49F20 (49F10 53A10)},
  MRNUMBER = {877013 (88m:49033)},
MRREVIEWER = {Anna Salvadori},
       DOI = {10.1007/BF01405098},
       URL = {http://dx.doi.org/10.1007/BF01405098},
}

\bib{hardt-simon}{article}{
   author={Hardt, Robert},
   author={Simon, Leon},
   title={Boundary regularity and embedded solutions for the oriented
   Plateau problem},
   journal={Ann. of Math. (2)},
   volume={110},
   date={1979},
   number={3},
   pages={439--486},
   issn={0003-486X},
   review={\MR{554379}},
   doi={10.2307/1971233},
}

\bib{hoffman-white-sequences}{article}{
    author = {Hoffman, David},
    author= {White, Brian},
     TITLE = {Sequences of embedded minimal disks whose curvatures blow up
              on a prescribed subset of a line},
   JOURNAL = {Comm. Anal. Geom.},
  FJOURNAL = {Communications in Analysis and Geometry},
    VOLUME = {19},
      YEAR = {2011},
    NUMBER = {3},
     PAGES = {487--502},
}

\bib{khan}{article}{
    author = {Khan, Siddique},
     TITLE = {A minimal lamination of the unit ball with singularities along
              a line segment},
   JOURNAL = {Illinois J. Math.},
  FJOURNAL = {Illinois Journal of Mathematics},
    VOLUME = {53},
      YEAR = {2009},
    NUMBER = {3},
     PAGES = {833--855 (2010)},
}

   \bib{kleene}{article}{
    author = {Kleene, Stephen J.},
     TITLE = {A minimal lamination with {C}antor set-like singularities},
   JOURNAL = {Proc. Amer. Math. Soc.},
  FJOURNAL = {Proceedings of the American Mathematical Society},
    VOLUME = {140},
      YEAR = {2012},
    NUMBER = {4},
     PAGES = {1423--1436},
      ISSN = {0002-9939},
}

\bib{Li}{article}{
   author={Li, Peter},
   title={Complete surfaces of at most quadratic area growth},
   journal={Comment. Math. Helv.},
   volume={72},
   date={1997},
   number={1},
   pages={67--71},
   issn={0010-2571},
   review={\MR{1456316 (98h:53057)}},
   doi={10.1007/PL00000367},
}

\bib{martin-white}{article}{
   author={Mart{\'{\i}}n, Francisco},
   author={White, Brian},
   title={Properly embedded, area-minimizing surfaces in hyperbolic 3-space},
   journal={J. Differential Geom.},
   volume={97},
   date={2014},
   number={3},
   note={Preprint arXiv:1302.5159v3},
   pages={515--544},
   issn={0022-040X},
   review={\MR{3263513}},
}

\bib{MeeksRegularity1}{article}{
    author = {Meeks, William H., III},
     TITLE = {Regularity of the singular set in the {C}olding-{M}inicozzi
              lamination theorem},
   JOURNAL = {Duke Math. J.},
    VOLUME = {123},
      YEAR = {2004},
    NUMBER = {2},
     PAGES = {329--334},   
}

\bib{meeks-rosenberg-uniqueness}{article}{
   author={Meeks, William H., III},
   author={Rosenberg, Harold},
   title={The uniqueness of the helicoid},
   journal={Ann. of Math. (2)},
   volume={161},
   date={2005},
   number={2},
   pages={727--758},
   issn={0003-486X},
   review={\MR{2153399 (2006f:53012)}},
   doi={10.4007/annals.2005.161.727},
}

\bib{morgan-finiteness}{article}{
 author={Morgan, Frank},
   title={On finiteness of the number of stable minimal hypersurfaces with a fixed boundary},
   journal={Indiana U. Math. J.},
   volume={35},
   date={1986},
   number={4},
   pages={779--833},
  
}

\bib{Perez}{article}{
   author={P{\'e}rez, Joaqu{\'{\i}}n},
   title={Stable embedded minimal surfaces bounded by a straight line},
   journal={Calc. Var. Partial Differential Equations},
   volume={29},
   date={2007},
   number={2},
   pages={267--279},
   issn={0944-2669},
   review={\MR{2307776 (2008e:53018)}},
   doi={10.1007/s00526-006-0069-2},
}

\bib{Schoen}{article}{
   author={Schoen, Richard},
   title={Estimates for stable minimal surfaces in three-dimensional
   manifolds},
   conference={
      title={Seminar on minimal submanifolds},
   },
   book={
      series={Ann. of Math. Stud.},
      volume={103},
      publisher={Princeton Univ. Press, Princeton, NJ},
   },
   date={1983},
   pages={111--126},
   review={\MR{795231 (86j:53094)}},
}

\bib{tonegawa} {article}{
author={ Tonegawa,Y.}, 
title={Existence and regularity of constant mean curvature hypersurfaces in hyperbolic space},
journal= { Math. Z.},
date={1996},
number= {221,  no. 4}, 
pages={91-615},
review={ MR1385170},
}

\bib{white-enneper}{article}{
   author={White, Brian},
   title={Half of Enneper's surface minimizes area},
   conference={
      title={Geometric analysis and the calculus of variations},
   },
   book={
      publisher={Int. Press, Cambridge, MA},
   },
   date={1996},
   pages={361--367},
   review={\MR{1449416 (98c:51003)}},
}

\bib{white-lectures}{article}{
 author={White, Brian},
   title={Lectures on minimal surface theory},
   date={2014},
   note={arXiv:1308.3325v2.  An article  based on a four-lecture introductory minicourse on minimal surface theory given at the 2013 summer program of the Institute for Advanced Study and the Park City Mathematics Institute}
}

\bib{white-C1}{article}{
   author={White, Brian},
   title={Curvatures of embedded minimal disks blow up on subsets of $C^1$
   curves},
   journal={J. Differential Geom.},
   volume={100},
   date={2015},
   number={2},
   pages={389--394},
   issn={0022-040X},
   review={\MR{3343836}},
}

\bib{white-controlling}{article}{
 author={White, Brian},
   title={Controlling area blow-up in minimal or bounded mean curvature varieties},
   journal={J. Differential Geom.},
   volume={102},
   date={2016},
   pages={501--535},
   date={2016},
   note={Preprint available at arXiv:1207.3448v3[math.DG]}
}

\bib{white-newboundary}{article}{
 author={White, Brian},
   title={On Boundary Regularity for Minimal Varieties},
   date={2017},
   note={In preparation}
}

\end{biblist}

\end{bibdiv}

\end{document}